\theoremstyle{definition}
\newtheorem{defn}{Definition}[section]
\newtheorem{theorem}{Theorem}[section]
\newtheorem{corollary}{Corollary}[theorem]
\newtheorem{lemma}[theorem]{Lemma}
\theoremstyle{remark}
\newtheorem{remark}{Remark}[section]
\date{}
\begin{document}
		\title{\sc On Intersection and Co-maximal Hypergraph of $\mathbb{Z}_n$}
		
		\author{{ \sc Sachin Ballal}\footnote{Corresponding Author}~~  and \sc{ Ardra A N} }
	\affil{School of Mathematics and Statistics, University of Hyderabad,
		500046, India\\ sachinballal@uohyd.ac.in, 23mmpp02@uohyd.ac.in}

	\date{}
	\maketitle
	
	\begin{abstract}
 The aim of this paper is to study the intersection hypergraph  $\tilde{\Gamma}_\mathcal{H}(\mathbb{Z}_n)$ and co-maximal hypergraph $Co_\mathcal{H}(\mathbb{Z}_n)$ on the subgroups of $\mathbb{Z}_n$. We prove that  the intersection and co-maximal hypergraph of  a finite abelian group are isomorphic. Hence,  we focus on $\tilde{\Gamma}_\mathcal{H}(\mathbb{Z}_n)$ and examine some of the structural properties, viz., diameter, girth and chromatic number of $\tilde{\Gamma}_\mathcal{H}(\mathbb{Z}_n)$. Also, we provide characterizations for  hypertrees, star structures of  $\tilde{\Gamma}_\mathcal{H}(\mathbb{Z}_n)$, and investigate the planarity and non-planarity of  $\tilde{\Gamma}_\mathcal{H}(\mathbb{Z}_n)$.
\end{abstract}
\noindent \small \textbf{Keywords:} Hypergraphs, cyclic groups, subgroups, diameter, chromatic number, planar, genus\\
\noindent \small \textbf{AMS Subject Classification} 05C25, 05C65, etc.
	
	\section{Introduction}

		In \cite{berge}, C. Berge introduced the notion of hypergraphs as a natural generalisation of a graph which helps to study multiple relations over pair-wise relations. Hypergraphs on algebraic structures are a new area of study, generalizing graphs to extend key results from graph theory. Cyclic groups are one of the important class of groups and are the building blocks of abelian groups. Many researchers have analysed different types of graphs constructed from the subgroups of cyclic groups. In \cite{devicomplement}, P. Devi and R. Rajkumar studied the complement of the intersection graph of subgroups of a group. They classified all finite groups for which this graph is totally disconnected, bipartite, complete bipartite, a tree, a star graph, or 
		$C_3$-free. They also characterized all finite groups whose complement of the intersection graph is planar. In \cite{msaha}, M. Saha \textit{et al.} studied various structural properties of co-maximal subgroup graph of $\mathbb{Z}_n$.  In 
		particular, they characterized the value of $n$ for which the graph is hamiltonian, eulerian, perfect etc. In \cite{cameron2023hypergraphs}, P. J. Cameron \textit{et al.} have defined and studied various hypergraphs on elements of groups. In \cite{sb},  we have studied the intersection hypergraph on subgroups of a dihedral group. As subgroups play a key role in understanding groups, exploring hypergraphs on subgroups of a group can provide meaningful insights.

		A \textit{hypergraph} $\mathcal{H}$ is a pair $(V(\mathcal{H}),E(\mathcal{H}))$, where $V(\mathcal{H})$ is a set of  vertices and $E(\mathcal{H})$ is a set of hyperedges, where each hyperedge is a subset of $V(\mathcal{H})$. A hypergraph $\mathcal{H}'=(V'(\mathcal{H}'),E'(\mathcal{H}'))$   is called a \textit{subhypergraph} of  $\mathcal{H} = (V(\mathcal{H}),E(\mathcal{H}))$ if  $V'(\mathcal{H}')  \subseteq V(\mathcal{H})$ and $E'(\mathcal{H}') \subseteq E(\mathcal{H})$. A \textit{path in a hypergraph $\mathcal{H}$} is an alternating sequence of distinct
	vertices and edges of the form $v_1e_1v_2e_2...v_k$ such that $v_i,v_{i+1}$ is in $e_i$ for all $1 \leq i \leq k-1$. The \textit{cycle} 
	is a path whose first vertex is the same as the last vertex. The \textit{length} of a path is the
	number of hyperedges in the path. Two vertices $x_1$ and $x_2$ are said to be \textit{adjacent}, denoted by $x_1 \sim x_2$, if there exists a hyperedge containing both $x_1$ and $x_2$.  A hypergraph is said to be \textit{connected} if there exists a path between any two pair of vertices, otherwise 
	it is called a \textit{disconnected hypergraph}. The \textit{distance} between two vertices is the minimum length of the 
	path connecting these two vertices. The \textit{diameter} of a hypergraph is the maximum 
	distance among all pairs of vertices.  The \textit{girth} of a hypergraph is the length of a shortest cycle it contains. A hypergraph is called a \textit{star} if there is a vertex which belongs to all hyperedges.  
	The \textit{incidence graph (or bipartite representation)} $\mathcal{I}(\mathcal{H})$ of $\mathcal{H}$ is a bipartite graph with vertex set $V(\mathcal{H}) \cup E(\mathcal{H})$ and a vertex $v \in V(\mathcal{H}) $ is adjacent to a vertex $e \in E(\mathcal{H})$ if and only if $v \in e$ in $\mathcal{H}$. 	A \textit{proper vertex-coloring} (often simply called a proper coloring) of a hypergraph
	$\mathcal{H}$ is an assignment of colors to the vertices of $\mathcal{H}$ such that no hyperedge contains all vertices of the
	same color. The \textit{chromatic number} of $\mathcal{H}$, denoted by $\chi(\mathcal{H})$, is the minimum
	number of colors needed for a proper vertex-coloring of $\mathcal{H}$. Two simple hypergraphs $\mathcal{H}_1$ and $\mathcal{H}_2$ are called \textit{isomorphic} if there exists a bijection between their vertex sets such that any subset of vertices form a hyperedge in $\mathcal{H}_1$ if and only if the corresponding subset of vertices forms a hyperedge in $\mathcal{H}_2$. If $\mathcal{H}_1$ and $\mathcal{H}_2$ are isomorphic, then we denote it by $\mathcal{H}_1\cong\mathcal{H}_2$. The \textit{empty hypergraph} is the hypergraph with empty vertex set and empty hyperedge set.

	An \textit{embedding of a graph} on a surface is a continuous and  one to one
	function from a topological representation of the graph into the surface. We denote by $S_n$ the surface obtained from the sphere $S_0$  by adding $n$ handles. The number $n$ is called the \textit{genus of the surface} $S_n, n\geq 0$.  The \textit{orientable genus} of a graph $G$, denoted by $g(G)$, is the minimum genus of
	a surface in which $G$ can be embedded.  A \textit{cross-cap} is a topological object formed by identifying opposite points on the boundary of a circle (or a disk) and is equivalent to gluing a Möbius strip into a hole in a surface. A surface obtained by adding $k$ crosscaps to $S_0$ 
	is known as the non-orientable surface and we denote it by $N_k$. The number $k$ is called the crosscap of $N_k$. The non-orientable genus of
	a graph $G$, denoted by $\tilde{g}(G)$, is the smallest integer $k$ such that $G$ can be embedded on $N_k$.  A graph is said to be \textit{planar} if it can be drawn on the plane in such a way that no edges intersect, except at a common end vertex.  A graph is said to be \textit{toroidal} if it  can be embedded on a torus and is called \textit{projective} if it can be embedded on a projective plane.   Further, note that if $H$ is a subgraph of a graph $G$, then $g(H) \leq g(G)$ and $\tilde{g}(H) \leq \tilde{g}(G)$. A hypergraph is \textit{toroidal} if its incidence graph is toroidal and is  \textit{projective} if its incidence graph is projective. For more details on graphs and hypergraphs, one may refer \cite{voloshinhypergraphs,white1985graphs}, etc.
	
	The Cauchy's theorem for abelian groups states that, if $G$ is a group and if a prime $p$ divides $\mid G \mid$, then $G$ has subgroup of order $p$.

In \cite{sb},we have introduced the intersection hypergraph of a group as follows:
		\begin{defn}
		Let $G$ be a group and $S$ be the set of all non-trivial proper subgroups of $G$. \textit{The intersection hypergraph of $G$}, denoted by $\tilde{\Gamma}_\mathcal{H}(G)$, is a hypergraph whose vertex set,\\ $V =\{H \in S \,\, | \,\, H \cap K = \{e\} \,\, \text{for some $ K \in S $} \}$ and $E \subseteq V$ is a hyperedge if and only if \begin{enumerate}
			\item For distinct $H,K \, \in E$, $H \cap K = \{e\}.$
			\item There does not exist  $E' \supset E$ which  satisfies (1).
		\end{enumerate}
		
	\end{defn}
\noindent Following is the definition of co-maximal hypergraph of a group in general.
		\begin{defn}
		Let $G$ be a group and $S$ be the set of all non-trivial proper subgroups of $G$. The \textit{co-maximal hypergraph of $G$}, denoted by $Co_{\mathcal{H}}(G)$, is an undirected hypergraph whose vertex set, $V=\{H \in S \, | \, HK=G \,\, \text{for some} \, K \in S\}$ and $E \subseteq V$ is a hyperedge if and only if \begin{enumerate}
			\item for distinct $H,K \, \in E$, $HK = G.$
			\item there does not exist  $E' \supset E$ which  satisfies (1).
		\end{enumerate}
	\end{defn}
	
	In this paper, our aim is to study the intersection and co-maximal hypergraph on the subgroups of $\mathbb{Z}_n$.  In Section 2, we prove that the intersection and co-maximal hypergraph of a finite abelian group  are isomorphic. In Section 3, we study various structural properties of intersection hypergraph of $\mathbb{Z}_n$. In Section 4,  we  discuss the possibilities of $\tilde{\Gamma}_\mathcal{H}(\mathbb{Z}_n)$ which can be embedded on the plane, torus and projective plane.
	
	\section{Isomorphism between Intersection and Co-maximal Hypergraphs}
	In this section, we establish that the intersection hypergraph and co-maximal hypergraph of a finite abelian group are isomorphic. For this, we need the following definitions and results.\\
		A subgroup $G$ has a \textit{dual} if there exists a duality from its subgroup lattice onto the subgroup lattice of a group $\bar{G}$, i.e., there exists a bijective map $\delta: L(G) \rightarrow L(\bar{G})$ such that for all $H,K \in L(G), H \leq K$ if and only if $\delta(K) \leq \delta(H)$. Let $\sigma$ be an isomorphism from $G$ to $\bar{G}$. Then, the map $\bar{\sigma}:L(G) \rightarrow L(\bar{G})$ defined as $\bar{\sigma}(H)=\sigma(H)$ for all subsets $H$ of $G$ is a projectivity from $G$ to $\bar{G}$ and we call $\bar{\sigma}$ the \textit{projectivity induced by $\sigma$}.
	\begin{lemma}\cite{schmidt}
		Let $G$ be a finite abelian group, written multiplicatively, and let $G^{\star}=Hom(G, \mathbb{C}^\star)$ be the set of all homomorphisms from $G$ into the multiplicative group $\mathbb{C}^{\star}$ 
		of complex numbers. For $\sigma, \tau \in G^\star$, define $\sigma \circ\tau \in G^\star$ by $x^{\sigma \circ\tau} = x^{\sigma}  x^{\tau}$.
		\begin{enumerate}
			\item Then $(G^\star, \circ)$ is a group isomorphic to $G$; it is called the character group of $G$.
			\item  If $1 \neq x \in G$, there exists $\sigma \in G^\star$ such that $x^{\sigma} \neq 1$.
		\end{enumerate}  
	\end{lemma}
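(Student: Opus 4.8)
The plan is to prove both parts by reducing to the cyclic case via the fundamental theorem of finite abelian groups, after recording two reduction lemmas. The first is that the character construction converts direct products into direct products: if $G = A \times B$, the restriction map $G^\star \to A^\star \times B^\star$, $\sigma \mapsto (\sigma|_A, \sigma|_B)$, is an isomorphism, since a character on $A \times B$ is determined by its values on the two factors via $\sigma((a,1)(1,b)) = \sigma(a,1)\,\sigma(1,b)$ and, conversely, any pair of characters on $A$ and $B$ assembles to one on $A \times B$. The second is the cyclic base case: if $G = \langle g \rangle$ has order $n$, a character is determined by $g^\sigma$, which must be an $n$-th root of unity, and each of the $n$ roots of unity yields a well-defined character; hence $G^\star$ is cyclic of order $n$, generated by $\zeta$ with $g^\zeta = e^{2\pi i/n}$, so $G^\star \cong \mathbb{Z}_n \cong G$.

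With these in hand, part (1) is immediate: writing $G \cong \mathbb{Z}_{n_1} \times \cdots \times \mathbb{Z}_{n_k}$ and iterating the product lemma gives $G^\star \cong \mathbb{Z}_{n_1}^\star \times \cdots \times \mathbb{Z}_{n_k}^\star \cong \mathbb{Z}_{n_1} \times \cdots \times \mathbb{Z}_{n_k} \cong G$.

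For part (2) I would argue concretely in the same coordinates. Given $1 \neq x \in G$, transport $x$ to a tuple $(a_1, \ldots, a_k)$ under the fixed isomorphism $G \cong \prod_i \mathbb{Z}_{n_i}$; since $x \neq 1$, some coordinate satisfies $a_j \not\equiv 0 \pmod{n_j}$. Then define $\sigma$ by projecting onto the $j$-th factor and applying the generating character $\zeta_j$, so that $x^\sigma = \exp(2\pi i\, a_j / n_j) \neq 1$, which is exactly what is needed. As a more intrinsic alternative I would instead work off the cyclic subgroup $\langle x \rangle$: choose $\psi \in \langle x \rangle^\star$ sending $x$ to a primitive root of unity of order $|x|$, so that $x^\psi \neq 1$, and then extend $\psi$ along the inclusion $\langle x \rangle \hookrightarrow G$.

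The hard part will be slightly different in the two routes. In the coordinate route, the only real content is the careful bookkeeping in the product lemma together with the cyclic base case, both of which are elementary. In the extension route, everything rests on the single nontrivial input that $\mathbb{C}^\star$ is a divisible, hence injective, abelian group, so that homomorphisms into it extend along subgroup inclusions; I would either cite this or establish it via Baer's criterion. I expect the coordinate approach to be the cleaner one to write up here, since it reuses the explicit characters $\zeta_j$ already constructed for part (1) and avoids invoking injective-module machinery.
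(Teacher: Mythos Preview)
The paper does not supply its own proof of this lemma: it is quoted verbatim from Schmidt's book \cite{schmidt} and used as a black box in the proof of Theorem~\ref{selfduality} and Theorem~\ref{iso}. So there is nothing in the paper to compare your argument against.

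That said, your proposal is correct and is exactly the standard argument one finds in Schmidt or in any treatment of Pontryagin duality for finite abelian groups. The two reduction lemmas (multiplicativity of the character functor over direct products, and the explicit cyclic computation $\mathbb{Z}_n^\star \cong \mu_n \cong \mathbb{Z}_n$) together with the structure theorem give part~(1) cleanly. For part~(2), both routes you sketch are valid; the coordinate argument is entirely self-contained, while the extension argument is the more conceptual one and is precisely what Schmidt uses (divisibility of $\mathbb{C}^\star$ gives injectivity, hence any character on $\langle x\rangle$ extends to $G$). One small omission: you implicitly assume $(G^\star,\circ)$ is a group before exhibiting the isomorphism; strictly speaking you should record that the pointwise product of homomorphisms into an abelian target is again a homomorphism, that the constant map $1$ is the identity, and that $\sigma^{-1}(x) = \sigma(x)^{-1}$ defines inverses. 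This is a one-line check, but the statement does ask for it.
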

 
	\begin{theorem}\label{selfduality}\cite{schmidt} 
		A finite abelian group G is self-dual, that is, there exists 
		a duality from G onto G.
	\end{theorem}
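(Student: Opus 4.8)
The plan is to realize the self-duality concretely through the character group $G^\star$ furnished by the preceding Lemma. Since $G^\star \cong G$ by part (1), it suffices to exhibit a duality from $L(G)$ onto $L(G^\star)$; composing this with the lattice isomorphism $L(G^\star) \to L(G)$ induced by any isomorphism $G^\star \to G$ then produces a duality from $G$ onto itself, which is exactly self-duality. The natural candidate is the \emph{annihilator} map $\delta \colon L(G) \to L(G^\star)$ defined by
$$\delta(H) = H^{\perp} = \{\sigma \in G^\star : x^{\sigma} = 1 \text{ for all } x \in H\}.$$
One checks at once that $H^{\perp}$ is a subgroup of $G^\star$, so $\delta$ is well defined, and that it reverses inclusions in the easy direction: if $H \leq K$, then every character trivial on $K$ is trivial on $H$, whence $K^{\perp} \leq H^{\perp}$.

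The substance lies in showing that $\delta$ is a bijection and that the reverse implication of the order reversal holds, and the key to both is the counting identity $|H^{\perp}| = [G:H]$. I would obtain this by identifying the characters of $G$ that vanish on $H$ with the characters of the quotient $G/H$: a character of $G/H$ pulls back to a character of $G$ trivial on $H$, and conversely every such character factors through $G/H$. This gives a bijection $H^{\perp} \cong (G/H)^\star$, and applying part (1) of the Lemma to the finite abelian group $G/H$ yields $|H^{\perp}| = |(G/H)^\star| = |G/H| = [G:H]$. I expect this quotient identification to be the main technical point, since it is what converts the order-theoretic claim into a tractable arithmetic one.

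With the counting in hand, the remaining step is the double-annihilator identity $(H^{\perp})^{\perp} = H$, taken under the canonical evaluation map $G \to G^{\star\star}$ sending $x$ to the character $\sigma \mapsto x^{\sigma}$. Part (2) of the Lemma guarantees this map is injective, hence an isomorphism by comparison of orders, so $G$ is canonically identified with $G^{\star\star}$. Under this identification $H \leq (H^{\perp})^{\perp}$ is immediate, while applying the order formula twice gives $|(H^{\perp})^{\perp}| = [G^\star : H^{\perp}] = |G^\star|/[G:H] = |H|$, forcing equality $(H^{\perp})^{\perp} = H$. This shows $\delta$ is an involutive, inclusion-reversing bijection, so it is a duality from $G$ onto $G^\star$; transporting it along $G^\star \cong G$ completes the proof. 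The one point requiring care throughout is keeping the three groups $G$, $G^\star$, and $G^{\star\star}$ and their canonical maps straight so as to avoid circular reasoning, but once $|H^{\perp}| = [G:H]$ is established, bijectivity and the converse order reversal follow formally.
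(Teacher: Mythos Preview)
Your proposal is correct and follows essentially the same route as the paper (which cites Schmidt): the paper likewise defines the annihilator $X^{\perp}=\{\sigma\in G^\star : x^{\sigma}=1\ \text{for all }x\in X\}$, observes that $\perp$ is a duality from $G$ onto $G^\star$, and then composes with the projectivity $\rho$ induced by an isomorphism $G^\star\to G$ to obtain the self-duality $\delta=\rho\perp$. The only discrepancy is notational---you call the annihilator map itself $\delta$, whereas the paper reserves $\delta$ for the composite $\rho\perp$---so you may wish to align your notation with theirs when reading the subsequent Theorem~\ref{iso}.
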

		\noindent In the proof of Theorem \ref{selfduality}, for a subset $X$ of a finite abelian group $G$, $X^\perp$ is defined as,  $$X^{\perp}=\{\sigma \in G^\star \mid x^{\sigma}=1\,\,  \text{for all}\, x \in X\}.$$ It is also denoted by $\perp(X)$. Observe that $\perp$ is a duality from $G$ onto $G^\star$. Let $\rho$ be the projectivity induced by an isomorphism from $G^\star$ onto $G$. Hence, $\delta=\rho\perp$, the composition of $\rho$ and $\perp$, is a duality from $G$ onto $G$.
	\begin{theorem}\label{iso}
		For a finite abelian group $G, \tilde{\Gamma}_{\mathcal{H}}(G) \cong Co_\mathcal{H}(G)$. 
	\end{theorem}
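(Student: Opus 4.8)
The plan is to let the self-duality of $G$ furnished by Theorem \ref{selfduality} do all the work: the duality $\delta = \rho\perp : L(G) \to L(G)$ is an order-reversing bijection of the subgroup lattice, and I would transport the two defining clauses of a hyperedge across it. First I would record the standard lattice-theoretic consequences of $\delta$ being a duality, namely that it interchanges meets and joins and swaps the extreme elements: $\delta(H \cap K) = \delta(H) \vee \delta(K)$, $\delta(H \vee K) = \delta(H) \cap \delta(K)$, together with $\delta(\{e\}) = G$ and $\delta(G) = \{e\}$. Since $G$ is abelian, the join $H \vee K$ in $L(G)$ is exactly the product $HK$, so $\delta$ converts intersections into products and vice versa; this is the one place where commutativity (beyond self-duality) enters. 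Because $\delta$ merely permutes the two extreme subgroups, it restricts to a bijection of the set $S = L(G) \setminus \{\{e\}, G\}$ of non-trivial proper subgroups onto itself.

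The central observation is that trivial intersection and co-maximality are dual conditions. Applying $\delta$ to $H \cap K = \{e\}$ gives $\delta(H)\delta(K) = \delta(H) \vee \delta(K) = \delta(H \cap K) = \delta(\{e\}) = G$, and the reverse implication follows by the same computation applied to $\delta^{-1}$ (itself a duality). Thus for $H, K \in S$ one has $H \cap K = \{e\}$ if and only if $\delta(H)\delta(K) = G$. I would apply this first at the level of vertices: $H$ lies in $V(\tilde{\Gamma}_{\mathcal{H}}(G))$ precisely when $H \cap K = \{e\}$ for some $K \in S$, which holds if and only if $\delta(H)\delta(K) = G$ for some $K \in S$; and since $\delta$ maps $S$ onto $S$, the element $\delta(K)$ ranges over all of $S$, so this is exactly the condition for $\delta(H)$ to lie in $V(Co_{\mathcal{H}}(G))$. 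Hence $\delta$ restricts to a bijection $V(\tilde{\Gamma}_{\mathcal{H}}(G)) \to V(Co_{\mathcal{H}}(G))$.

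Finally I would check that this vertex bijection carries hyperedges to hyperedges. A set $E$ is a hyperedge of $\tilde{\Gamma}_{\mathcal{H}}(G)$ exactly when (1) every pair of distinct members intersects trivially and (2) $E$ is maximal with property (1). As $\delta$ is injective it preserves distinctness, and by the equivalence above clause (1) for $E$ translates verbatim into pairwise co-maximality of $\delta(E)$. For clause (2), the point is that $\delta$, regarded now simply as a set bijection between the two vertex sets, induces an inclusion-preserving bijection on their power sets: $E \subseteq E'$ if and only if $\delta(E) \subseteq \delta(E')$. Consequently $E$ is maximal among pairwise-trivially-intersecting subsets if and only if $\delta(E)$ is maximal among pairwise co-maximal subsets, so $E$ is a hyperedge of $\tilde{\Gamma}_{\mathcal{H}}(G)$ if and only if $\delta(E)$ is a hyperedge of $Co_{\mathcal{H}}(G)$, which is precisely the required isomorphism. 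I expect the only genuine obstacle to be the bookkeeping of the first paragraph, confirming that the abstract duality $\delta$ really sends $\cap$ to $HK$ and $\{e\}$ to $G$; once that is in hand, the two defining clauses transport automatically, the maximality clause for free because $\delta$ is an honest inclusion-preserving bijection on vertices.
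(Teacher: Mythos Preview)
Your proposal is correct and follows essentially the same approach as the paper: both use the self-duality $\delta=\rho\perp$ of Theorem~\ref{selfduality} to transport trivial intersection to co-maximality and thereby identify vertices and hyperedges. If anything, your treatment is slightly more careful than the paper's, since you explicitly verify that $\delta$ restricts to a bijection of the non-trivial proper subgroups and that the maximality clause~(2) in the definition of a hyperedge is preserved, points the paper glosses over.
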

	\begin{proof}
		Let $G$ be a finite abelian group and $H_1 \in V(\tilde{\Gamma}_{\mathcal{H}}(G))$.
		Then, by the definition, there exists $H_2  \in V(\tilde{\Gamma}_{\mathcal{H}}(G))$ such that $H_1 \cap H_2 =\{e\}$. Note that for subgroups $H_1,H_2$ of $G$, $H_1 \vee H_2=$ $<H_1 \cup H_2>$ and $H_1 \wedge H_2 = H_1 \cap H_2$ in $L(G)$, the lattice of subgroups of $G$. As $G$ is abelian, $\delta(H_1)$ and $\delta(H_2)$ are normal. Therefore, 
		$\delta(H_1) \delta(H_2) = \delta(H_1) \vee \delta(H_2) =
			  \rho \perp(H_1) \vee \rho\perp(H_2) = \\ \rho(\perp(H_1) \vee \perp(H_2))   = \rho(\perp(H_1 \wedge H_2)) 
			  = \rho((H_1 \wedge H_2)^{\perp})
			  = \rho(\{e\}^{\perp})
			 = \rho (G^\star)
			 = G$
		Thus, if $H_1 \in V(\tilde{\Gamma}_{\mathcal{H}}(G))$, then $\delta(H_1) \in V(Co_\mathcal{H}(G))$.\\
		Define a map $f:    V(\tilde{\Gamma}_{\mathcal{H}}(G)) \longrightarrow V(Co_\mathcal{H}(G))$ such that $f(H) = \delta(H)$ for each $H \in  V(\tilde{\Gamma}_{\mathcal{H}}(G))$. Since $\delta$ is a well-defined bijective map, $f$ is also a well-defined bijective map.\\ Observe that,  
		$
		H_1 \sim H_2 \,\, \text{in} \, \, \tilde{\Gamma}_{\mathcal{H}(G)} \, \, 
			 \text{if and only if} \, \,  H_1 \cap H_2 =\{e\}
			 \text{if and only if} \, \,(H_1 \cap H_2)^{\perp} = \{e\}^{\perp}
		\, \, 
		 \text{if and}\\ \text{only if} \, \, (H_1 \wedge H_2)^{\perp} = G^\star 
		\, \, 
		 \text{if and only if} \, \, \rho((H_1 \wedge H_2)^{\perp}) = \rho(G^\star)
		\, \, 
		 \text{if and only if} \, \, \rho(\perp(H_1 \wedge H_2)) = G 
			\, \, 
			 \text{if and only if} \, \,\rho(\perp(H_1) \vee \perp(H_2)) = G
			\, \, 
			 \text{if and only if} \, \, \rho \perp(H_1) \vee \rho\perp(H_2) = G 
		\, \, 
		 \text{if and only if}\\ \, \, \delta(H_1) \vee \delta(H_2) = G 
		\, \, 
		 \text{if and only if} \, \,\delta(H_1)  \delta(H_2) = G 
			\, \, 
			 \text{if and only if} \, \, f(H_1)  f(H_2) = G 
		\, \, 
		 \text{if and only if}\\ \, \, f(H_1) \sim f(H_2) \,\, \text{in} \,\, Co_\mathcal{H}(G)$, i.e., $
		 H_1 \sim H_2 \,\, \text{in} \, \, \tilde{\Gamma}_{\mathcal{H}(G)}$ if and only if $f(H_1) \sim f(H_2) \,\, \text{in} \,\, Co_\mathcal{H}(G)$. 
		Hence,  for a positive integer $r$,  $\{H_1, H_2, \ldots, H_r\}$ is a hyperedge  of $\tilde{\Gamma}_{\mathcal{H}}(G)$ if and only if \\ $\{f(H_1),f(H_2), \dots, f(H_r)\}$ is a hyperedge of $Co_\mathcal{H}(G)$.\\
		Thus, $f$ is an isomorphism from $V(\tilde{\Gamma}_{\mathcal{H}}(G))$ to $V(Co_\mathcal{H}(G))$. Thus, $\tilde{\Gamma}_{\mathcal{H}}(G) \cong Co_\mathcal{H}(G)$. 
	\end{proof}
	
	\begin{remark}
		For non-abelian groups, Theorem \ref{iso} need not be true. For example, consider the dihedral group of order 8, $D_4= <a,b\,|a^4=e=b^2, bab^{-1}=a^{-1}>$. The vertex set of $\tilde{\Gamma}_\mathcal{H}(D_4)$ is $ V=\{H_2,H_3,H_4,H_5, H_6,H_7,H_8,H_9\}$, where 
		$H_2=<a^2>, H_3=<b>, H_4=<ab>,H_5= \\ <a^2b>, H_6=<a^3b>, H_7=<a>, H_8=<a^2,b>$ and $ H_9=<a^2,ab> $.  Note that $\tilde{\Gamma}_{\mathcal{H}}(D_4)$ and $Co_\mathcal{H}(D_4)$ are not isomorphic because $\tilde{\Gamma}_{\mathcal{H}}(D_4)$ contains 4 hyperedges whereas $Co_\mathcal{H}(D_4)$ contains 5 hyperedges, see Figure \ref{D4}.
		\begin{figure}[h]
			\centering
			\subfloat[$\tilde{\Gamma}_\mathcal{H}(D_4)$]{\begin{tikzpicture}[scale=.6]
					
					% Draw ellipses with clear borders
					\draw[thick] (0,0) ellipse (3 and .8);
					\draw[thick] (1.5,0) ellipse (3 and .8);
					\draw[thick] (-.5,-.5) ellipse (.8 and 2);
					\draw[thick] (1.5,-.5) ellipse (.8 and 2);

					% Add labels for ellipses
					\node at (-2, 1) {$e_1$};
					\node at (4, 1) {$e_2$};
					\node at (-.5, 1.7) {$e_3$};
					\node at (1.5, 1.7) {$e_4$};

					% Add nodes
					
					\filldraw (-2,-0.2cm) circle (2pt) node[above] {\tiny{${H_2}$}};
					\filldraw (-1,-0.2cm) circle (2pt) node[above] {\tiny{${H_3}$}};
					\filldraw (0,-0.2cm) circle (2pt) node[above] {\tiny{${H_5}$}};
					\filldraw (4, -0.2cm) circle (2pt) node[above] {\tiny{${H_7}$}};
					\filldraw (1, -0.2cm) circle (2pt) node[above] {\tiny{${H_4}$}};
					\filldraw (2, -0.2cm) circle (2pt) node[above] {\tiny{${H_6}$}};
					\filldraw (-0.5, -1cm) circle (2pt) node[below] {\tiny{${H_9}$}};
					\filldraw (1.5, -1cm) circle (2pt) node[below] {\tiny{${H_8}$}};

			\end{tikzpicture}}
			\hspace{.5cm}
			\subfloat[$Co_\mathcal{H}(D_4)$]{\begin{tikzpicture}[scale=.7]
					
					\draw[rotate=30,thick] (.8,-0.5) ellipse (.7 and 2);
					\draw[thick] (1,0) ellipse (3 and .5);
					\draw[thick] (1,-.8) circle (1.5cm);
					\draw[thick] (2,.2) circle (1.2cm);
					\draw[thick] (2,-.5) circle (1.2cm);

					\node at (-2.2, 0.2) {$e_1$};
					\node at (-.5, 1.5) {$e_2$};
					\node at (2.5, 1.5) {$e_3$};
					\node at (-.6, -1.5) {$e_4$};
					\node at (3, -1.5) {$e_5$};

					% Add nodes

					\filldraw (-1,-0.2cm) circle (2pt) node[right] {\tiny{${H_3}$}};
					\filldraw (2,.9cm) circle (2pt) node[above] {\tiny{${H_5}$}};
					\filldraw (1.1, -0.2cm) circle (2pt) node[right] {\tiny{${H_7}$}};
					\filldraw (0.2, 1cm) circle (2pt) node[above] {\tiny{${H_4}$}};
					\filldraw (0, -1.2cm) circle (2pt) node[below] {\tiny{${H_6}$}};
					\filldraw (2, -1.2cm) circle (2pt) node[left] {\tiny{${H_9}$}};
					\filldraw (3, -0.2cm) circle (2pt) node[left] {\tiny{${H_8}$}};

			\end{tikzpicture}}
			\caption{}
			\label{D4}
		\end{figure}		
	\end{remark}
	\begin{remark}
		The converse of Theorem \ref{iso} need not be true, i.e., even if $\tilde{\Gamma}_{\mathcal{H}}(G) \cong Co_\mathcal{H}(G)$, G need not be abelian. For example, consider the dihedral group $D_3=<a,b \mid a^3=e=b^2, bab^{-1}=a^{-1}>$ of order 6. Both $\tilde{\Gamma}_{\mathcal{H}}(D_3)$ and $Co_\mathcal{H}(D_3)$ consist of a single hyperedge containing $4$ subgroups - $<a>,$ $<b>,<ab>$ and $<a^2b>$. Hence, $\tilde{\Gamma}_{\mathcal{H}}(D_3) \cong Co_\mathcal{H}(D_3)$, but $D_3$ is not abelian.
	\end{remark}
	\section{Structural Properties of $\tilde{\Gamma}_\mathcal{H}(\mathbb{Z}_n)$}
In this section, we discuss the diameter, girth, chromatic number and characterization for star and hypertree structures of $\tilde{\Gamma}_\mathcal{H}(\mathbb{Z}_n)$.

\noindent We will use the following simple result at various places.
	\begin{lemma}\label{lemma2}
	Let $n=p_1^{\alpha_1}p_2^{\alpha_2}\ldots p_k^{\alpha_k}$, where $p_1,p_2, \ldots,p_k$ are distinct primes and $\alpha_1,\alpha_2,\ldots,\alpha_k$ are non-negative integers.	If $H_1=<p_1^{r_1}p_2^{r_2}\ldots p_k^{r_k}>$ and $H_2=<p_1^{s_1}p_2^{s_2}\ldots p_k^{s_k}>$, where $0 \leq r_i,s_i \leq \alpha_i$ for $i=1,2, \ldots, k$, are two subgroups of  $\mathbb{Z}_n$, then   $lcm(p_1^{r_1}p_2^{r_2}\ldots p_k^{r_k},p_1^{s_1}p_2^{s_2}\ldots p_k^{s_k})=n$ if and only if $H_1 \cap H_2 =\{e\}$.
	\end{lemma}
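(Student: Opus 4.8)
The plan is to reduce the statement to the well-known dictionary between subgroups of a cyclic group and divisors of its order, under which intersection of subgroups corresponds to the least common multiple of the associated divisors. Once this dictionary is in place, the claimed equivalence is immediate.

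First I would recall the structure of the subgroup lattice of $\mathbb{Z}_n$: for every divisor $d$ of $n$ there is a unique subgroup $\langle d\rangle=\{0,d,2d,\ldots\}$, and its order is $n/d$; these exhaust all subgroups of $\mathbb{Z}_n$. Since $0\le r_i,s_i\le\alpha_i$ for each $i$, the integers $d_1=p_1^{r_1}p_2^{r_2}\cdots p_k^{r_k}$ and $d_2=p_1^{s_1}p_2^{s_2}\cdots p_k^{s_k}$ both divide $n$, so $H_1=\langle d_1\rangle$ and $H_2=\langle d_2\rangle$ are genuine subgroups, with $|H_1|=n/d_1$ and $|H_2|=n/d_2$.

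The central step is the identity $H_1\cap H_2=\langle\operatorname{lcm}(d_1,d_2)\rangle$. To see it, note that for a divisor $d\mid n$ an element $x\in\mathbb{Z}_n$ lies in $\langle d\rangle$ exactly when $d\mid x$; hence $x\in H_1\cap H_2$ iff $d_1\mid x$ and $d_2\mid x$, i.e.\ iff $\operatorname{lcm}(d_1,d_2)\mid x$. Because $d_1,d_2\mid n$, we also have $\operatorname{lcm}(d_1,d_2)\mid n$, so the right-hand side is a bona fide subgroup of $\mathbb{Z}_n$, namely $\langle\operatorname{lcm}(d_1,d_2)\rangle$, of order $n/\operatorname{lcm}(d_1,d_2)$. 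Writing $\operatorname{lcm}(d_1,d_2)=\prod_{i}p_i^{\max(r_i,s_i)}$, this order equals $\prod_i p_i^{\alpha_i-\max(r_i,s_i)}$.

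Finally I would conclude by comparing cardinalities: $H_1\cap H_2=\{e\}$ holds iff $|H_1\cap H_2|=1$, iff $\alpha_i-\max(r_i,s_i)=0$ for every $i$, iff $\max(r_i,s_i)=\alpha_i$ for all $i$, iff $\operatorname{lcm}(d_1,d_2)=\prod_i p_i^{\alpha_i}=n$; reading this chain in both directions gives exactly the stated equivalence. The argument is essentially bookkeeping, and the only point requiring genuine care is the intersection identity $\langle d_1\rangle\cap\langle d_2\rangle=\langle\operatorname{lcm}(d_1,d_2)\rangle$ together with the verification that $\operatorname{lcm}(d_1,d_2)\mid n$, which guarantees that the intersection is the claimed subgroup of the claimed order. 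I expect this to be the main (albeit minor) obstacle.
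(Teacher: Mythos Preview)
Your argument is correct: the identification $\langle d_1\rangle\cap\langle d_2\rangle=\langle\operatorname{lcm}(d_1,d_2)\rangle$ in $\mathbb{Z}_n$ is exactly the right tool, and your verification of it via the divisibility characterisation of membership in $\langle d\rangle$ is clean. From there the equivalence $|H_1\cap H_2|=1\iff\operatorname{lcm}(d_1,d_2)=n$ drops out immediately.

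As for comparison with the paper: there is nothing to compare. The paper introduces this lemma with the sentence ``We will use the following simple result at various places'' and states it without proof. Your write-up therefore supplies what the paper leaves implicit, and does so along the standard lines one would expect.
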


	\begin{theorem}\label{vertexset}
		For $n=p_1^{\alpha_1}p_2^{\alpha_2}\ldots p_k^{\alpha_k}$, where $p_1,p_2, \ldots,p_k$ are distinct primes and $\alpha_1,\alpha_2,\ldots,\alpha_k$ are non-negative integers, a proper subgroup $<p_1^{r_1}p_2^{r_2}\ldots p_k^{r_k}>$ of $\mathbb{Z}_n$, where $0 \leq r_i,s_i \leq \alpha_i$ for $i=1,2, \ldots, k$, is in the vertex set of $\tilde{\Gamma}_\mathcal{H}(\mathbb{Z}_n)$ if and only if  $r_i = \alpha_i $ for some $i= 1,2, \ldots, k$.
	\end{theorem}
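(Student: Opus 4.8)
The plan is to translate membership in the vertex set directly into an arithmetic condition on the exponent vector $(r_1,\ldots,r_k)$ via Lemma~\ref{lemma2}. By definition, $H=\langle p_1^{r_1}\cdots p_k^{r_k}\rangle$ lies in $V(\tilde{\Gamma}_\mathcal{H}(\mathbb{Z}_n))$ exactly when there is a non-trivial proper subgroup $K=\langle p_1^{s_1}\cdots p_k^{s_k}\rangle$ with $H\cap K=\{e\}$. By Lemma~\ref{lemma2} this intersection is trivial if and only if $\mathrm{lcm}(\prod_i p_i^{r_i},\prod_i p_i^{s_i})=n$, i.e.\ if and only if $\max(r_i,s_i)=\alpha_i$ for every $i$. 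Thus the whole question reduces to deciding, for a fixed admissible vector $(r_i)$, whether there is an admissible vector $(s_i)$ satisfying this max-condition while yielding a genuinely non-trivial proper $K$. Throughout I take the $p_i$ to be the true prime divisors of $n$, so $\alpha_i\ge 1$, and I record that $K$ is non-trivial exactly when $s_i<\alpha_i$ for some $i$ (its generator is $<n$) and proper exactly when $s_i>0$ for some $i$ (its generator is $>1$).

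For the forward direction I would argue by contraposition. Suppose $r_i<\alpha_i$ for all $i$. Then $\max(r_i,s_i)=\alpha_i$ forces $s_i=\alpha_i$ for every $i$, so the only candidate $K$ is $\langle n\rangle=\{e\}$, which is not a valid (non-trivial) subgroup. Hence no admissible witness $K$ exists and $H\notin V(\tilde{\Gamma}_\mathcal{H}(\mathbb{Z}_n))$. Contrapositively, if $H$ is a vertex then $r_i=\alpha_i$ for at least one $i$.

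For the converse I would construct $K$ explicitly. Partition the indices into $A=\{i:r_i=\alpha_i\}$ and $B=\{i:r_i<\alpha_i\}$; by hypothesis $A\neq\emptyset$, and since $H$ is a non-trivial proper subgroup its generator is neither $1$ nor $n$, which forces $B\neq\emptyset$. Set $s_i=\alpha_i$ for $i\in B$ and $s_i=0$ for $i\in A$. Then $\max(r_i,s_i)=\alpha_i$ on both blocks, so $H\cap K=\{e\}$ by Lemma~\ref{lemma2}. Moreover $K$ is proper, since $s_i=\alpha_i>0$ for the indices in the non-empty set $B$, and $K$ is non-trivial, since $s_i=0<\alpha_i$ for the indices in the non-empty set $A$. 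Hence $K$ is a legitimate witness and $H$ is a vertex.

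The step I expect to require the most care is the converse, specifically verifying that the constructed $K$ is simultaneously non-trivial \emph{and} proper: these two requirements are precisely what explain why both inputs are needed, namely the existence of an index with $r_i=\alpha_i$ (to place some $s_i=0$, keeping $K$ non-trivial) and the standing assumption that $H$ is non-trivial, i.e.\ some $r_i<\alpha_i$ (to place some $s_i=\alpha_i>0$, keeping $K$ proper). It is also worth flagging that, under the degenerate reading $\alpha_i=0$ for some $i$ (a prime not actually dividing $n$), the index condition would hold vacuously; restricting to the genuine prime factorization is what keeps the characterization meaningful.
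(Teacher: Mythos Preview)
Your proof is correct and follows essentially the same approach as the paper: both reduce the vertex condition to the $\mathrm{lcm}$/max-exponent criterion via Lemma~\ref{lemma2}, argue the forward direction by showing that $r_i<\alpha_i$ for all $i$ forces any trivially-intersecting $K$ to be $\{e\}$, and handle the converse by explicitly constructing a witness $K$. Your forward direction is in fact slightly cleaner than the paper's (which splits into three divisibility cases), and your witness $K$ differs cosmetically from the paper's choice $K=\langle p_2^{\alpha_2}\cdots p_k^{\alpha_k}\rangle$, but the substance is the same.
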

	\begin{proof}
		Suppose  $H=<x>=<p_1^{r_1}p_2^{r_2}\ldots p_k^{r_k}> \in V(\tilde{\Gamma}(\mathbb{Z}_n))$. If $r_i < \alpha_i$ for all $i=1,2, \ldots, k$, then we will show that for all proper non-trivial subgroups $K$ of $\mathbb{Z}_n$, $H \cap K \neq \{e\}$ and, which is a contradiction.\\
		Let $K=<y>=<p_1^{s_1}p_2^{s_2}\ldots p_k^{s_k}>$ be a proper non-trivial subgroups of $\mathbb{Z}_n$. Then, consider  following cases:\\
		\textbf{Case 1.} If $y \mid x$, then  $<x> \subseteq <y>$, i.e., $H \subseteq K$. Therefore, $H \cap K = H \neq \{e\}$.\\
			\textbf{Case 2.} If $x \mid y$, then  $<y> \subseteq <x>$, i.e., $H \supseteq K$. Therefore, $H \cap K = K \neq \{e\}$.\\
			\textbf{Case 3.} If $x \nmid y$ and $y \nmid x$, then $lcm(x,y)\neq n$. Thus, by Lemma \ref{lemma2}, $H \cap K  \neq \{e\}$.\\
			Therefore, in all the three cases, we have $H \cap K \neq \{e\}$ for all proper non-trivial subgroups $K$ of $\mathbb{Z}_n$ and, which is a contradiction.
			
			Conversely, suppose that $<p_1^{r_1}p_2^{r_2}\ldots p_k^{r_k}>$ is a proper non-trivial subgroup of $\mathbb{Z}_n$ with atleast one $r_i = \alpha_i$ for $i=1,2, \ldots, k$. Without loss of generality, assume that $\alpha_1=r_1$. Consider the proper non-trivial subgroup $K=<p_2^{\alpha_2}\ldots p_k^{\alpha_k}>$. Note that $lcm(p_1^{r_1}p_2^{r_2}\ldots p_k^{r_k}, p_2^{\alpha_2}\ldots p_k^{\alpha_k})=n$. Therefore, by Lemma \ref{lemma2}, $ <p_1^{r_1}p_2^{r_2}\ldots p_k^{r_k}>\cap K=\{e\}$ and hence,  $<p_1^{r_1}p_2^{r_2}\ldots p_k^{r_k}> \in  V(\tilde{\Gamma}(\mathbb{Z}_n))$. 
	\end{proof}
	\begin{theorem}
		If $G$ is a finite cyclic group, then $\mid G \mid$ is a power of prime if and only if $\tilde{\Gamma}_\mathcal{H}(G)$ is empty.
	\end{theorem}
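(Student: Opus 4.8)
The plan is to reduce everything to the vertex-set characterization in Theorem \ref{vertexset}. Since $G$ is a finite cyclic group, I may identify $G$ with $\mathbb{Z}_n$ where $n=|G|$, and write $n=p_1^{\alpha_1}p_2^{\alpha_2}\cdots p_k^{\alpha_k}$ with the $p_i$ distinct primes and each $\alpha_i\geq 1$. Recall that the empty hypergraph is precisely the one with empty vertex set (and hence empty hyperedge set, as hyperedges are subsets of the vertices). Thus it suffices to prove that $V(\tilde{\Gamma}_\mathcal{H}(\mathbb{Z}_n))=\emptyset$ if and only if $k=1$, that is, if and only if $n$ is a prime power.

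For the forward direction, suppose $n=p^{\alpha}$ is a prime power, so $k=1$. Every non-trivial proper subgroup of $\mathbb{Z}_{p^{\alpha}}$ has the form $\langle p^{r}\rangle$ with $0<r<\alpha$, since $r=0$ yields $G$ and $r=\alpha$ yields $\{e\}$. Applying Theorem \ref{vertexset} in the case $k=1$, such a subgroup lies in the vertex set only if $r=\alpha$, which is impossible for a proper non-trivial subgroup. Hence no vertex exists and $\tilde{\Gamma}_\mathcal{H}(\mathbb{Z}_n)$ is empty. Equivalently, one may note that the subgroup lattice of $\mathbb{Z}_{p^{\alpha}}$ is a chain, so any two non-trivial proper subgroups are comparable and therefore intersect non-trivially, leaving no candidate vertex.

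For the converse, suppose $n$ is not a prime power, so $k\geq 2$. I would exhibit an explicit vertex by taking $H=\langle p_1^{\alpha_1}\rangle$. Since $k\geq 2$ we have $1<p_1^{\alpha_1}<n$, so $H$ is a non-trivial proper subgroup, and its exponent vector is $(r_1,\dots,r_k)=(\alpha_1,0,\dots,0)$, which satisfies $r_1=\alpha_1$. By Theorem \ref{vertexset} this forces $H\in V(\tilde{\Gamma}_\mathcal{H}(\mathbb{Z}_n))$, so the vertex set is non-empty and the hypergraph is non-empty. Alternatively, one can verify directly via Lemma \ref{lemma2} that $H$ and $K=\langle p_2^{\alpha_2}\cdots p_k^{\alpha_k}\rangle$ satisfy $lcm(p_1^{\alpha_1},\,p_2^{\alpha_2}\cdots p_k^{\alpha_k})=n$, whence $H\cap K=\{e\}$, so both are vertices.

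I do not anticipate a serious obstacle, since Theorem \ref{vertexset} carries the real content. The only point requiring care is the boundary bookkeeping in the prime-power case: confirming that \emph{proper and non-trivial} forces $0<r<\alpha$, so that the condition $r=\alpha$ of Theorem \ref{vertexset} can never be met, and then invoking the definition of the empty hypergraph to pass from ``empty vertex set'' to ``empty hypergraph.''
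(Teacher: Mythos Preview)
Your proof is correct, but the route differs from the paper's. The paper argues both directions directly from first principles: for the prime-power case it notes that the subgroup lattice of a cyclic $p$-group is a chain, so any two non-trivial proper subgroups intersect non-trivially; for the converse it picks two distinct prime divisors $p,q$ of $|G|$ and invokes Cauchy's theorem to produce subgroups $H_p,H_q$ of orders $p,q$, which necessarily meet trivially. Your main line instead funnels everything through Theorem~\ref{vertexset}, reading off the vertex set from the exponent-vector criterion. This is a legitimate shortcut since Theorem~\ref{vertexset} precedes the present statement, and it makes the argument almost mechanical; the trade-off is that the paper's version is entirely self-contained and does not depend on the (somewhat heavier) case analysis inside the proof of Theorem~\ref{vertexset}. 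You even sketch the paper's direct arguments as your ``alternatively'' remarks, so you have in fact recorded both approaches.
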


	\begin{proof}
		Suppose that $\mid G \mid$ is  of prime power. Then, for any non-trivial proper subgroups $H,K$ of $G$, either $H \subseteq K$ or $K \subseteq H$. Hence, $H \cap K \neq \{e\}$ and therefore, $\tilde{\Gamma}_\mathcal{H}(G)$ is empty.\\
		Conversely, suppose that $\mid G \mid$ is not a power of prime. Then, there exist two distinct prime divisors, say $p$ and $q$, of $|G|$. Hence, by Cauchy’s Theorem for Abelian Groups, there exist two distinct subgroups $H_p$ and  $H_q$ of order $p$ and  $q$ respectively with $H_p \cap H_q=\{e\}$. Thus, $H_p, H_q \in V(\tilde{\Gamma}_\mathcal{H}(G))$ and therefore, $\tilde{\Gamma}_\mathcal{H}(G)$ is non empty.
	\end{proof}
\noindent \textbf{Note:}
		Since $\tilde{\Gamma}_\mathcal{H}(\mathbb{Z}_n)$ is empty for $\omega(n)=1$, we exclude this case and consider $\omega(n) \geq 2$ throughout the article, where $\omega(n)$ denotes the number of distinct prime divisors of $n$.
	
	\begin{theorem}\label{singleedgeset}
		 In $\tilde{\Gamma}_\mathcal{H}(\mathbb{Z}_n)$, $\mid E(\tilde{\Gamma}_\mathcal{H}(\mathbb{Z}_n))\mid = 1$ if and only if $n=p_1p_2$, where $p_1,p_2$ are distinct prime divisors of $n$.
			\end{theorem}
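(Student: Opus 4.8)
The plan is to recast the adjacency relation as a purely combinatorial condition on exponent vectors and then to exhibit, in each relevant case, the exact number (or at least a lower bound of two) of maximal hyperedges. Throughout I write $n=p_1^{\alpha_1}\cdots p_k^{\alpha_k}$ with $\omega(n)=k\geq 2$, and to each subgroup $H=\langle p_1^{r_1}\cdots p_k^{r_k}\rangle$ I attach its \emph{deficiency set} $D(H)=\{\,i : r_i<\alpha_i\,\}$. The first step is the key reduction: by Lemma \ref{lemma2}, $H\sim K$ iff $\operatorname{lcm}$ of the generators is $n$, i.e.\ $\max(r_i,s_i)=\alpha_i$ for every $i$, and this holds precisely when $D(H)\cap D(K)=\emptyset$. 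Since a proper non-trivial subgroup has $D(H)\neq\emptyset$, and Theorem \ref{vertexset} says $H$ is a vertex iff $D(H)$ is a \emph{proper} subset of $\{1,\dots,k\}$, it follows that a set of vertices is a hyperedge exactly when it is a maximal family of vertices with pairwise disjoint deficiency sets.

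For the (easy) forward implication, if $n=p_1p_2$ then the only proper non-trivial subgroups are $\langle p_1\rangle$ and $\langle p_2\rangle$, with deficiency sets $\{2\}$ and $\{1\}$; these are disjoint, so $\{\langle p_1\rangle,\langle p_2\rangle\}$ is the unique hyperedge and $|E|=1$.

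For the converse I would prove the contrapositive: assuming $\omega(n)\geq 2$ but $n\neq p_1p_2$, I produce two distinct hyperedges. Two cases arise. If $k=2$ and some exponent, say $\alpha_1\geq 2$, then there are at least two distinct vertices with deficiency set $\{1\}$, namely $\langle p_2^{\alpha_2}\rangle$ and $\langle p_1 p_2^{\alpha_2}\rangle$; pairing each with $\langle p_1^{\alpha_1}\rangle$ (deficiency $\{2\}$) gives two distinct maximal hyperedges, since the only proper non-empty deficiency sets available are $\{1\}$ and $\{2\}$ and nothing with empty deficiency exists to be appended. If instead $k\geq 3$, I build two hyperedges from two different packings of $\{1,\dots,k\}$: the ``all singletons'' family $\{\,\langle \prod_{j\neq i}p_j^{\alpha_j}\rangle : 1\leq i\leq k\,\}$, whose deficiency sets are $\{1\},\dots,\{k\}$, and the family realizing the deficiency sets $\{1,2\},\{3\},\dots,\{k\}$, which is legitimate because $\{1,2\}$ is a proper subset once $k\geq 3$.

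The only part needing genuine care is verifying \emph{maximality} and \emph{distinctness} of the constructed hyperedges, especially when $k\geq 3$; both are handled by a single covering observation. Once the deficiency sets occurring in a clique already cover $\{1,\dots,k\}$, any further vertex would need deficiency set disjoint from all of them, hence empty, which is impossible — so such a clique is maximal. Each of my two families covers $\{1,\dots,k\}$ and is therefore a hyperedge, and they are distinct because the first contains a subgroup of deficiency $\{1\}$ whereas the second does not. This yields $|E|\geq 2$ whenever $n\neq p_1p_2$, completing the equivalence.
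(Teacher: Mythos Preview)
Your proof is correct, and the deficiency-set reformulation is a clean way to see what is going on: adjacency becomes ``disjoint deficiency sets,'' and hyperedges become maximal packings of non-empty proper subsets of $\{1,\dots,k\}$.

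The paper's proof, however, takes a shorter and less constructive route for the converse. Rather than exhibiting two full hyperedges and checking maximality via your covering argument, the paper simply produces two \emph{non-adjacent} vertices in each case (for $k=2$ with $\alpha_2\ge 2$, the vertices $\langle p_1^{\alpha_1}p_2^{\alpha_2-1}\rangle$ and $\langle p_1^{\alpha_1}p_2^{\alpha_2-2}\rangle$; for $k\ge 3$, the vertices $\langle p_1^{\alpha_1}\rangle$ and $\langle p_2^{\alpha_2}\rangle$) and then observes that any hyperedge containing one cannot contain the other, so at least two hyperedges exist. This bypasses maximality entirely. What your approach buys is explicitness: you actually name the two hyperedges, and your covering criterion (``once the deficiency sets cover $\{1,\dots,k\}$, the clique is maximal'') is a reusable tool for later structural questions. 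What the paper's approach buys is brevity: non-adjacency alone already forces $|E|\ge 2$, with no need to verify that anything is maximal.
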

		\begin{proof}
			Suppose that $n=p_1p_2$, where $p_1,p_2$ are distinct prime divisors of $n$. Then, the only vertices of $\tilde{\Gamma}_\mathcal{H}(\mathbb{Z}_n)$ are $<p_1>$ and $<p_2>$ with $<p_1> \cap <p_2>=\{e\}$. Therefore, $E(\tilde{\Gamma}_\mathcal{H}(\mathbb{Z}_n))=\{e_1\}$, where $e_1 = \{<p_1>,<p_2>\}$.
			
			Converely, assume that $n \neq p_1p_2$, where $p_1,p_2$ are distinct prime divisors of $n$. Then, consider the following cases:\\
			\textbf{Case 1.} Suppose that $\omega(n)=2$ and $n=p_1^{\alpha_1}p_2^{\alpha_2}$, where $p_1,p_2$ are distinct prime divisors of $n$ and either $\alpha_1 \geq 2$ or  $\alpha_2 \geq 2$. Without loss of generality, assume that $\alpha_2 \geq 2$. Then, $<p_1^{\alpha_1}p_2^{\alpha_2 -1}>$ and $<p_1^{\alpha_1}p_2^{\alpha_2 -2}>$ are two distinct vertices of $\tilde{\Gamma}_\mathcal{H}(\mathbb{Z}_n)$ with $<p_1^{\alpha_1}p_2^{\alpha_2 -1}> \cap <p_1^{\alpha_1}p_2^{\alpha_2 -2}> =\\ <p_1^{\alpha_1}p_2^{\alpha_1-1}> \neq \{e\}$. Therefore, $<p_1^{\alpha_1}p_2^{\alpha_2 -1}>$ and $<p_1^{\alpha_1}p_2^{\alpha_2 -2}>$ are not adjacent. Hence, there exist two distinct hyperedges $e_1$ and $e_2$ such that $e_1$ containing $<p_1^{\alpha_1}p_2^{\alpha_2 -1}>$ and $e_2$ containing $<p_1^{\alpha_1}p_2^{\alpha_2 -2}>$. Thus,  $\mid E(\tilde{\Gamma}_\mathcal{H}(\mathbb{Z}_n))\mid > 1$.\\
			\textbf{Case 2.}   Suppose that $\omega(n) \geq 3$ and   $n=p_1^{\alpha_1}p_2^{\alpha_2}p_3^{\alpha_3}\ldots p_k^{\alpha_k}$, where $p_1,p_2, \ldots, p_k$ are distinct primes and $k\geq 3$. Observe that $<p_1^{\alpha_1}>$ and $<p_2^{\alpha_2}>$ are two distinct vertices of $\tilde{\Gamma}_\mathcal{H}(\mathbb{Z}_n)$ with $<p_1^{\alpha_1}> \cap  <p_2^{\alpha_2}> = <p_1^{\alpha_1}p_2^{\alpha_2}> \neq \{e\}$. Hence, there exist two distinct hyperedges  such that one contains $<p_1^{\alpha_1}>$ and the other contains $<p_2^{\alpha_2}>$. Thus,  $\mid E(\tilde{\Gamma}_\mathcal{H}(\mathbb{Z}_n))\mid > 1$.
		\end{proof}

	\begin{theorem}
		$\tilde{\Gamma}_\mathcal{H}(\mathbb{Z}_n)$ is connected with $diam(\tilde{\Gamma}_\mathcal{H}(\mathbb{Z}_n)) \leq 3$. In particular, \\
		 $$diam(\tilde{\Gamma}_\mathcal{H}(\mathbb{Z}_n)) = \begin{cases*}
			1, \, \text{if $\omega(n)=2$ and $n=p_1p_2$},\\
			2, \, \text{if $\omega(n)=2$ and $n \neq p_1p_2$},\\
			3, \,\,  \text{if} \, \omega(n)\geq 3.
		\end{cases*}$$
	\end{theorem}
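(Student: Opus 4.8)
The plan is to reduce everything to a clean combinatorial criterion for adjacency. Write $k=\omega(n)$, $[k]=\{1,\dots,k\}$ and $n=p_1^{\alpha_1}\cdots p_k^{\alpha_k}$, and to each vertex $H=\langle p_1^{r_1}\cdots p_k^{r_k}\rangle$ associate its \emph{saturated set} $S(H)=\{\,i : r_i=\alpha_i\,\}$. By Theorem \ref{vertexset} we have $S(H)\neq\emptyset$, and since $H$ is a proper subgroup $S(H)\neq[k]$; conversely every nonempty proper subset $U\subseteq[k]$ is realised, e.g.\ by the vertex $\langle\prod_{i\in U}p_i^{\alpha_i}\rangle$. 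Since $\operatorname{lcm}(\prod p_i^{r_i},\prod p_i^{s_i})=\prod p_i^{\max(r_i,s_i)}$, Lemma \ref{lemma2} yields the key equivalence $H\sim K\iff S(H)\cup S(K)=[k]$. Because a hyperedge is exactly a maximal family of pairwise trivially-intersecting subgroups, two vertices lie in a common hyperedge precisely when they are adjacent in this sense; hence the hypergraph distance between $H$ and $K$ equals their distance in the graph carrying this adjacency, and I may argue entirely with saturated sets.

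First I would pin down distances in terms of $S:=S(H)$ and $T:=S(K)$. Distance $1$ holds iff $S\cup T=[k]$. Assuming it does not, I claim distance $\le 2$ holds exactly when $S\cap T\neq\emptyset$: if $S\cap T\neq\emptyset$ then $U=[k]\setminus(S\cap T)$ is a nonempty proper subset, so it is the saturated set of some vertex $W$, and $S\cup U=T\cup U=[k]$ makes $W$ a common neighbour; conversely any common neighbour with saturated set $U$ forces $U\supseteq([k]\setminus S)\cup([k]\setminus T)=[k]\setminus(S\cap T)$, which equals $[k]$ when $S\cap T=\emptyset$, impossible for a vertex. Finally, when $S\cap T=\emptyset$ and $S\cup T\neq[k]$, the vertices with saturated sets $[k]\setminus S$ and $[k]\setminus T$ (both nonempty proper) furnish a path of length $3$. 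This simultaneously gives connectedness and $diam\le 3$, and shows that the distance between $H$ and $K$ is $3$ iff $S\cap T=\emptyset$ and $S\cup T\neq[k]$.

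It then remains to read off the three cases. If $\omega(n)=2$ and $n=p_1p_2$, the only vertices are $\langle p_1\rangle,\langle p_2\rangle$ with saturated sets $\{1\},\{2\}$, whose union is $\{1,2\}$, so the distance is $1$. If $\omega(n)=2$ and $n\neq p_1p_2$, then some $\alpha_i\ge 2$, so two distinct vertices with the \emph{same} saturated set exist (for instance the two non-adjacent vertices exhibited in the proof of Theorem \ref{singleedgeset}), giving $diam\ge 2$; on the other hand no two nonempty proper subsets of $\{1,2\}$ are disjoint with union $\neq\{1,2\}$, so no pair is at distance $3$, whence $diam=2$. If $\omega(n)\ge 3$, the vertices $\langle p_1^{\alpha_1}\rangle$ and $\langle p_2^{\alpha_2}\rangle$ have saturated sets $\{1\}$ and $\{2\}$, which are disjoint with union $\{1,2\}\neq[k]$, so they lie at distance exactly $3$ and $diam=3$.

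The main obstacle is establishing the adjacency criterion $H\sim K\iff S(H)\cup S(K)=[k]$ together with the reduction of hypergraph distance to graph distance; once these are in hand the distance computation is a short exercise in set operations. I would be careful on two routine points: verifying that the intermediate vertices chosen in the length-$2$ and length-$3$ paths are genuinely distinct from the endpoints, which follows because their saturated sets differ from $S$ and $T$, and confirming that every prescribed nonempty proper saturated set is realisable, which is immediate from Theorem \ref{vertexset}.
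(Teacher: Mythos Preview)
Your proof is correct and takes a genuinely different route from the paper. The paper argues by direct case analysis on the exponent vectors of $H_1,H_2$: for $\omega(n)=2$ it splits into four subcases according to which coordinate of each is maximal, and for $\omega(n)\ge 3$ it nests further subcases, each time writing down an explicit intermediate subgroup (or pair of them) and invoking Lemma~\ref{lemma2}. Your approach instead abstracts each vertex to its saturated set $S(H)\subseteq[k]$, proves the clean dichotomy $H\sim K\iff S(H)\cup S(K)=[k]$, and then derives an exact distance formula purely in terms of set operations: $\mathrm{dist}=1$ iff $S\cup T=[k]$, $\mathrm{dist}=2$ iff $S\cup T\neq[k]$ but $S\cap T\neq\emptyset$, and $\mathrm{dist}=3$ iff $S\cap T=\emptyset$ and $S\cup T\neq[k]$. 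This buys you more than brevity: it makes the \emph{lower} bounds completely transparent, whereas the paper's proof of Case~3 exhibits a path of length~3 in Subsubcase~3.2.2 but never argues that no shorter path exists, and never explicitly checks that such a pair occurs; your criterion handles both directions at once and lets you name a concrete diameter-realising pair $\langle p_1^{\alpha_1}\rangle,\langle p_2^{\alpha_2}\rangle$ when $k\ge 3$. The only point I would tighten is the distinctness of the \emph{hyperedges} (not just the vertices) along your length-$2$ and length-$3$ paths, since the paper's definition of a path requires distinct edges; this follows immediately because consecutive non-adjacent endpoints cannot share a hyperedge, but it is worth one sentence.
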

	\begin{proof}
		Consider the following cases:\\
		\textbf{Case 1.} Suppose that $\omega(n)=2$ and $n=p_1p_2$. By Theorem \ref{singleedgeset}, $\mid E(\tilde{\Gamma}_\mathcal{H}(\mathbb{Z}_n))\mid = 1$ and, hence $diam(\tilde{\Gamma}_\mathcal{H}(\mathbb{Z}_n))=1$.\\
		\textbf{Case 2.} Suppose that $\omega(n)=2$ and $n=p_1^{\alpha_1} p_2^{\alpha_2}$, where $p_1, p_2$ are distinct primes and either $\alpha_1 \geq 2$ or  $\alpha_2 \geq 2$. Let $H_1=<p_1^{\beta_1} p_2^{\beta_2}>$ and $H_2=<p_1^{\gamma_1} p_2^{\gamma_2}>$, where $0 \leq \beta_1,\gamma_1 \leq \alpha_1$ and $0 \leq \beta_2,\gamma_2 \leq \alpha_2$, be two vertices of 	$\tilde{\Gamma}_\mathcal{H}(\mathbb{Z}_n)$. By Theorem \ref{vertexset}, either $\beta_1=\alpha_1$ or $\beta_2=\alpha_2$ as well as either  $\gamma_1=\alpha_1$ or $\gamma_2=\alpha_2$. It should be noted that the conditions $\beta_1=\alpha_1$ and $\beta_2=\alpha_2$ cannot hold simultaneously, nor $\gamma_1=\alpha_1$ and $\gamma_2=\alpha_2$. \\
		\textbf{Subcase 2.1.} Assume  $\beta_1=\alpha_1$ and  $\gamma_1=\alpha_1$. Then, $H_1 \cap H_2 = <p_1^{\alpha_1}p_2^{lcm(\beta_2,\gamma_2)}> \neq \{e\}$.  Consider the vertex $K=<p_2^{\alpha_2}>$. Hence, by Lemma \ref{lemma2}, $H_1 \cap K =\{e\}$ and $H_2 \cap K =\{e\}$. Therefore, there exist two hyperedges $e_1$ and $e_2$ such that $e_1$ containing $H_1$ and $K$, and $e_2$ containing $H_2$ and $K$. Thus, $H_1 e_1 K e_2 H_2$ is a path from $H_1$ to $H_2$ and hence, $dist(H_1,H_2)=2$. \\
			\textbf{Subcase 2.2.} Assume  $\beta_1=\alpha_1$ and  $\gamma_2=\alpha_2$. Then, by Lemma \ref{lemma2}, $H_1 \cap H_2 = \{e\}$. Hence, there exists a hyperedge $e_1$ containing $H_1$ and $H_2$. Therefore, $H_1 e_1 H_2$ is a path from $H_1$ to $H_2$ and thus,  $dist(H_1,H_2)=1$.\\
			\textbf{Subcase 2.3.} Assume  $\beta_2=\alpha_2$ and  $\gamma_1=\alpha_1$. Then, by Lemma \ref{lemma2}, $H_1 \cap H_2 = \{e\}$. Hence, there exists a hyperedge $e_1$ containing $H_1$ and $H_2$. Therefore, $H_1 e_1 H_2$ is a path from $H_1$ to $H_2$ and thus,  $dist(H_1,H_2)=1$.\\
				\textbf{Subcase 2.4.} Assume  $\beta_2=\alpha_2$ and  $\gamma_2=\alpha_2$. Then, by Lemma \ref{lemma2}, $H_1 \cap H_2 = <p_1^{lcm(\alpha_1,\gamma_1)}p_2^{\alpha_2}> \neq \{e\}$.  Consider the vertex $K=<p_1^{\alpha_1}>$. Hence, $H_1 \cap K =\{e\}$ and $H_2 \cap K =\{e\}$. Therefore, there exist two hyperedges $e_1$ and $e_2$ such that $e_1$ containing $H_1$ and $K$, and $e_2$ containing $H_2$ and $K$. Thus, $H_1 e_1 K e_2 H_2$ is a path from $H_1$ to $H_2$ and hence, $dist(H_1,H_2)=2$.

				\noindent\textbf{Case 3.} Suppose that  $\omega(n) \geq 3$ and  $n=p_1^{\alpha_1} p_2^{\alpha_2} \ldots p_k^{\alpha_k},$ where $p_1,p_2, \ldots, p_k$ are distinct primes, $\alpha_1, \alpha_2, \ldots, \alpha_k$ are positive integers and $k\geq 3$. Let $H_1=<p_1^{\beta_1} p_2^{\beta_2} \ldots p^{\beta_k}>$ and  $H_2=$ \\ $<p_1^{\gamma_1} p_2^{\gamma_2} \ldots p^{\gamma_k}>$, where $0 \leq \beta_i,\gamma_i \leq \alpha_i$ for $i=1,2, \ldots, k$, be two vertices of 	$\tilde{\Gamma}_\mathcal{H}(\mathbb{Z}_n)$. If $H_1 \cap H_2 =\{e\}$, then there exists a hyperedge containing $H_1$ and $H_2$. Therefore, $dist(H_1,H_2)=1.$ If $H_1 \cap H_2 \neq \{e\}$, then consider the following subcases: \\ 
				\textbf{Subcase 3.1.} Suppose $\beta_i=\alpha_i=\gamma_i$ for some $i$. Without loss of generality, assume that $\beta_1=\alpha_1=\gamma_1$. Consider the vertex $K=<p_2^{\alpha_2} \ldots p_k^{\alpha_k}>$. Observe that, by Lemma \ref{lemma2}, $H_1 \cap K =\{e\}$ and $H_2 \cap K =\{e\}$. Hence, there exist two hyperedges $e_1$ and $e_2$ such that $e_1$ containing $H_1$ and $K$, and $e_2$ containing $H_2$ and $K$. Thus, $H_1 e_1 K e_2 H_2$ is a path from $H_1$ to $H_2$ and therefore, $dist(H_1,H_2)=2$.\\
				\textbf{Subcase 3.2.} Suppose that whenever  $\beta_i = \alpha_i$, $\alpha_i \neq \gamma_i$. Assume that $\beta_m =\alpha_m$ and $\gamma_r =\alpha_r$, where $1 \leq m,r \leq k$. Consider the vertex  $K_1=<p_1^{\alpha_1}p_2^{\alpha_2}\ldots p_{m-1}^{\alpha_{m-1}} p_{m+1}^{\alpha_{m+1}} \ldots p_k^{\alpha_k}>$ of $\tilde{\Gamma}_\mathcal{H}(\mathbb{Z}_n)$. Clearly, $H_1 \cap K_1 =\{e\}$. Now, consider the following subsubcases:\\
				\textbf{Subsubcase 3.2.1.} If $H_2 \cap K_1=\{e\}$, then  there exist two hyperedges $e_1$ and $e_2$ such that $e_1$ containing $H_1$ and $K_1$, and $e_2$ containing $H_2$ and $K_1$. Thus, $H_1 e_1 K_1 e_2 H_2$ is a path from $H_1$ to $H_2$ and therefore, $dist(H_1,H_2)=2$.\\
				\textbf{Subsubcase 3.2.2.} If  $H_2 \cap K_1 \neq \{e\}$, then consider the vertex $K_2=<p_1^{\alpha_1}p_2^{\alpha_2}\ldots p_{r-1}^{\alpha_{r-1}} p_{r+1}^{\alpha_{r+1}} \ldots p_k^{\alpha_k}>$.  Observe that $K_1 \cap K_2 =\{e\}$ and $H_2 \cap K_2 =\{e\}$. Hence, there exist three hyperedges $e_1,e_2$ and $e_3$ such that $e_1$ containing $H_1$ and $K_1$, $e_2$ containing $K_1$ and $K_2$, and $e_3$ containing $H_2$ and $K_2$. Thus, $H_1 e_1 K_1 e_2 K_2 e_3 H_2$ is a path from $H_1$ to $H_2$ and thus, $dist(H_1,H_2)=3$.\\
				Thus, from all the cases above, we have $\tilde{\Gamma}_\mathcal{H}(\mathbb{Z}_n)$ is connected with $diam(\tilde{\Gamma}_\mathcal{H}(\mathbb{Z}_n)) \leq 3$.
			\end{proof}
			
	\begin{theorem}\label{starhyp}
		$\tilde{\Gamma}_\mathcal{H}(\mathbb{Z}_n)$ is a star hypergraph  if and only if $n=p_1^{\alpha_1} p_2$, where $p_1,p_2$ are distinct prime numbers and $\alpha_1$ is a positive integer.
	\end{theorem}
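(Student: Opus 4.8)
The plan is to reduce the statement to a purely combinatorial condition on the adjacency relation. First I would record the guiding observation that, directly from the definition of a hyperedge, the hyperedges of $\tilde{\Gamma}_\mathcal{H}(\mathbb{Z}_n)$ are exactly the maximal sets of pairwise-adjacent vertices, i.e.\ the maximal cliques of the relation $\sim$. From this I would prove the reformulation: $\tilde{\Gamma}_\mathcal{H}(\mathbb{Z}_n)$ is a star if and only if it possesses a \emph{universal vertex}, that is, a vertex adjacent to every other vertex. Indeed, if $c$ is adjacent to all other vertices then any maximal clique $Q$ must contain $c$, since otherwise $Q\cup\{c\}$ would be a strictly larger clique; conversely, if $c$ lies in every maximal clique but fails to be adjacent to some vertex $w$, then extending $w$ to a maximal clique produces a hyperedge avoiding $c$, a contradiction. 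Every vertex lies in at least one hyperedge (by definition of the vertex set it has trivial intersection with some nontrivial proper subgroup, which is then also a vertex), so this equivalence has no degenerate exceptions.

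Second, I would translate adjacency into arithmetic. Writing $n=p_1^{\alpha_1}\cdots p_k^{\alpha_k}$ and a vertex as $\langle p_1^{r_1}\cdots p_k^{r_k}\rangle$, Lemma~\ref{lemma2} gives that $\langle\prod p_i^{r_i}\rangle\sim\langle\prod p_i^{s_i}\rangle$ if and only if $\max(r_i,s_i)=\alpha_i$ for every $i$, while Theorem~\ref{vertexset} says that a proper subgroup is a vertex precisely when $r_i=\alpha_i$ for some $i$. Calling a coordinate $i$ \emph{full} for a vertex when $r_i=\alpha_i$, a vertex $v$ is universal exactly when every other vertex is full at every coordinate where $v$ is not full. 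The key auxiliary objects I would isolate are the extreme vertices $w_j=\langle p_j^{\alpha_j}\rangle$: each is a vertex, they are pairwise distinct, and $w_j$ is full only at coordinate $j$.

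Third, I would settle both directions. For sufficiency, when $n=p_1^{\alpha_1}p_2$ (so $\alpha_2=1$) I would check that $\langle p_1^{\alpha_1}\rangle$ is universal: any other vertex cannot have exponent $0$ at $p_2$ (that would force it to equal $\langle p_1^{\alpha_1}\rangle$), and since $\alpha_2=1$ there is no room for a proper intermediate exponent, so it is full at coordinate $2$, whence adjacency holds. For necessity I argue the contrapositive in two cases. If $\omega(n)\ge 3$, then for an arbitrary vertex $v$ I pick a non-full coordinate $m$ and use one of the $\ge 2$ indices $j\ne m$ with $w_j\ne v$ to produce a vertex non-adjacent to $v$; hence no universal vertex exists. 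If $\omega(n)=2$ with $\alpha_1,\alpha_2\ge 2$, then $v$ is full at exactly one coordinate, say $1$, and because $\alpha_2\ge 2$ there are at least two admissible exponents $s_2\in\{0,\dots,\alpha_2-1\}$, so I can choose $\langle p_1^{\alpha_1}p_2^{s_2}\rangle\ne v$ that is non-adjacent to $v$. In either case there is no universal vertex, so the hypergraph is not a star.

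The main obstacle I anticipate is making the reformulation airtight rather than the arithmetic: I must confirm that hyperedges are genuinely maximal cliques and that the universal-vertex equivalence holds unconditionally. Once that lemma is in place, the two necessity cases are short, and the only delicate point is the exponent-counting that separates $\alpha_2=1$ (which forces $s_2=r_2$, so the candidate non-adjacent vertex collapses to $v$ and universality survives) from $\alpha_2\ge 2$ (which supplies a genuinely different non-adjacent vertex); this boundary is exactly what pins down the form $n=p_1^{\alpha_1}p_2$.
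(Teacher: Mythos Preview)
Your proof is correct and follows essentially the same approach as the paper: both directions hinge on the observation that a star hypergraph is equivalent to the existence of a vertex adjacent to every other vertex, and both argue the converse by contraposition via a case split on $\omega(n)$. The only cosmetic differences are that you make the universal-vertex reformulation explicit (the paper uses it implicitly via maximality), and your non-adjacency witnesses in the $\omega(n)\ge 3$ case are the $w_j=\langle p_j^{\alpha_j}\rangle$ rather than the paper's $\langle p_1^{\beta_1}p_2^{\alpha_2}\rangle$ and $\langle p_1^{\beta_1}p_2^{\alpha_2}p_3^{\alpha_3}\rangle$; both choices work for the same reason.
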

	\begin{proof}
	Suppose $n=p_1^{\alpha_1} p_2$, where $p_1,p_2$ are distinct prime numbers and $\alpha_1$ is a positive integer. Consider the vertex $<p_1^{\alpha_1}>$ of $\tilde{\Gamma}_\mathcal{H}(\mathbb{Z}_n)$. By Theorem \ref{vertexset}, a vertex of $\tilde{\Gamma}_\mathcal{H}(\mathbb{Z}_n)$ different from $<p_1^{\alpha_1}>$ is in the form of $<p_1^l p_2>$, where $0 \leq l \leq \alpha_1-1$.   Then, as $lcm(p_1^{l}p_2,p_1^{\alpha_1})=p_1^{\alpha_1} p_2$, by Lemma \ref{lemma2}, we have $<p_1^l p_2> \, \cap $ $<p_1^{\alpha_1}> =\{e\}$. 
Therefore,  $<p_1^{\alpha_1}>$ must belongs to all the hyperedges of $\tilde{\Gamma}_\mathcal{H}(\mathbb{Z}_n)$ by the maximality condition of the hyperedges. Thus, $\tilde{\Gamma}_\mathcal{H}(\mathbb{Z}_n)$ is a star hypergraph.

	\noindent Conversely, suppose that  $n \neq p_1^{\alpha_1} p_2$, where $p_1,p_2$ are distinct prime numbers and $\alpha_1$ is a positive integer. Then, consider the following cases:\\
	\textbf{Case 1.} Suppose $\omega(n)= 2$ and  $n=p_1^{\alpha_1} p_2^{\alpha_2}$, where $p_1,p_2$ are distinct primes and $\alpha_1, \alpha_2 \geq 2$. Note that $V(\tilde{\Gamma}_\mathcal{H}(\mathbb{Z}_n))=A \cup B$, where 
	$A$ is the set of  vertices of the form $<p_1^{\alpha_1}>, <p_1^{\alpha_1}p_2>, \ldots,$ $<p_1^{\alpha_1}p_2^{\alpha_2-1}>$ and  B is the set of vertices of the form $<p_2^{\alpha_2}>, <p_1 p_2^{\alpha_2}>,\ldots, <p_1^{\alpha_1-1}p_2^{\alpha_2}>$. Observe that no two vertices of the set A  and no two vertices of the set B are adjacent. Hence, no vertex of $\tilde{\Gamma}_\mathcal{H}(\mathbb{Z}_n)$ belongs to all the hyperedges of $\tilde{\Gamma}_\mathcal{H}(\mathbb{Z}_n)$. Therefore, $\tilde{\Gamma}_\mathcal{H}(\mathbb{Z}_n)$ is not a star hypergraph.\\
		\textbf{Case 2.} Suppose $\omega(n) \geq 3$ and   $n=p_1^{\alpha_1}p_2^{\alpha_2}p_3^{\alpha_3}\ldots p_k^{\alpha_k}$, where $p_1,p_2, \ldots, p_k$ are distinct primes and $k\geq 3$. Let $H=<p_1^{\beta_1}p_2^{\beta_2}p_3^{\beta_3}\ldots p_k^{\beta_k}>$, where $0 \leq \beta_i \leq \alpha_i$ for all $i=1,2,3, \ldots, k$ be an arbitrary vertex of $\tilde{\Gamma}_\mathcal{H}(\mathbb{Z}_n)$. Since $H$ is a non-trivial subgroup of $\mathbb{Z}_n$,   $\beta_i \neq \alpha_i$ for some $i \in \{1,2, \ldots k\}$.	Without loss of generality, assume that $\beta_1 \neq \alpha_1$. Then, $ H \, \cap <p_1^{\beta_1}p_2^{\alpha_2}> = <p_1^{\beta_1}p_2^{\alpha_2}p_3^{\beta_3}\ldots p_k^{\beta_k}>\neq \{e\}$ and $ H \, \cap <p_1^{\beta_1}p_2^{\alpha_2}p_3^{\alpha_3}> = <p_1^{\beta_1}p_2^{\alpha_2}p_3^{\alpha_3}\ldots p_k^{\beta_k}> \neq \{e\}$. Thus, $H$ does not belong to all the hyperedges of $\tilde{\Gamma}_\mathcal{H}(\mathbb{Z}_n)$. Since $H$ is an arbitrary vertex of $\tilde{\Gamma}_\mathcal{H}(\mathbb{Z}_n)$, no vertex of $\tilde{\Gamma}_\mathcal{H}(\mathbb{Z}_n)$ belong to all the hyperedges of $\tilde{\Gamma}_\mathcal{H}(\mathbb{Z}_n)$. Therefore, $\tilde{\Gamma}_\mathcal{H}(\mathbb{Z}_n)$ is not a star hypergraph.
		\end{proof}
	
		\noindent Note that every star hypergraph is a hypertree. But the converse need not be true in general. The following theorem characterizes when 	$\tilde{\Gamma}_\mathcal{H}(\mathbb{Z}_n)$ is a hypertree. To characterize 	$\tilde{\Gamma}_\mathcal{H}(\mathbb{Z}_n)$ as a hypertree, we need the following definition.
		\begin{defn}\cite{voloshinhypergraphs}
			A \textit{host graph} $G$ for a hypergraph is a connected graph on the same vertex set such that
			every hyperedge induces a connected subgraph of $G$. A hypergraph $\mathcal{H} = (X,\mathcal{D})$ is called a \textit{hypertree} if there exists a host tree
			$T = (X,E)$ such that each edge $D \in \mathcal{D}$  induces a subtree in $T$. 
		\end{defn}
		
	\begin{theorem}
		$\tilde{\Gamma}_\mathcal{H}(\mathbb{Z}_n)$ is a hypertree iff $n=p_1^{\alpha_1} p_2$ or $n=p_1p_2p_3$, where $p_1,p_2,p_3$ are distinct primes and $\alpha_1 \geq 1$.
	\end{theorem}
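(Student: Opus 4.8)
The plan is to reduce everything to the intersection pattern of the hyperedges. First I would record, using Theorem~\ref{vertexset} and Lemma~\ref{lemma2}, that for $n=p_1^{\alpha_1}\cdots p_k^{\alpha_k}$ a vertex $\langle\prod p_i^{r_i}\rangle$ is governed, as far as adjacency is concerned, only by its \emph{maximal set} $M=\{i:r_i=\alpha_i\}$, a nonempty proper subset of $\{1,\dots,k\}$: indeed $H\sim K$ iff $\mathrm{lcm}=n$ iff for each prime at least one of $H,K$ is maximal there, i.e. $M(H)\cup M(K)=\{1,\dots,k\}$. Writing $Z(H)=\{i:r_i<\alpha_i\}$ for the complementary set, this reads $Z(H)\cap Z(K)=\emptyset$, and the hyperedges are exactly the maximal families of vertices with pairwise disjoint $Z$-sets; maximality forces the $Z$-blocks of a hyperedge to \emph{partition} $\{1,\dots,k\}$. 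The single structural tool I would isolate is an obstruction for host trees: if $\tilde{\Gamma}_\mathcal{H}(\mathbb{Z}_n)$ had a host tree $T$, its hyperedges would be subtrees $F_1,\dots,F_m$, and no four of them can satisfy the ``$4$-cycle'' pattern $F_1\cap F_2,\ F_2\cap F_3,\ F_3\cap F_4,\ F_4\cap F_1$ all nonempty while $F_1\cap F_3=F_2\cap F_4=\emptyset$. The reason is short: two disjoint subtrees $F_1,F_3$ of a tree are separated by a single edge $e$, and since each of $F_2,F_4$ meets both $F_1$ and $F_3$, connectedness forces $e\subseteq F_2$ and $e\subseteq F_4$, whence $F_2\cap F_4\neq\emptyset$, a contradiction. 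So exhibiting four hyperedges in this pattern proves non-hypertree-ness.

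For sufficiency I would treat the two shapes directly. If $n=p_1^{\alpha_1}p_2$, then by Theorem~\ref{starhyp} the hypergraph is a star hypergraph, and a star hypergraph is a hypertree (as noted just above the statement), so there is nothing more to do. If $n=p_1p_2p_3$, the six vertices are $\langle p_1p_2\rangle,\langle p_1p_3\rangle,\langle p_2p_3\rangle$ (maximal sets $\{1,2\},\{1,3\},\{2,3\}$), which are pairwise adjacent and form a triangle, together with $\langle p_1\rangle,\langle p_2\rangle,\langle p_3\rangle$, each adjacent to exactly one triangle vertex. The hyperedges are the triangle $\{\langle p_1p_2\rangle,\langle p_1p_3\rangle,\langle p_2p_3\rangle\}$ and the three pairs $\{\langle p_i\rangle,\langle p_jp_\ell\rangle\}$ with $\{j,\ell\}=\{1,2,3\}\setminus\{i\}$. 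A host tree is then obtained by joining the three triangle vertices in a path and attaching each $\langle p_i\rangle$ to its unique neighbour $\langle p_jp_\ell\rangle$; one checks directly that the triangle induces the path and each pair induces an edge, so every hyperedge is a subtree.

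For the converse I would show that every remaining $n$ produces the forbidden four-hyperedge pattern. When $\omega(n)=2$ with both exponents $\geq 2$, every maximal set is a singleton, so the intersection graph is the complete bipartite graph $K_{\alpha_1,\alpha_2}$ and every hyperedge has two elements; the four edges of any $4$-cycle of $K_{\alpha_1,\alpha_2}$ (present because both sides have size $\geq 2$) realize the pattern. When $\omega(n)=3$ with some exponent, say $\alpha_1$, at least $2$, the maximal set $\{2,3\}$ is carried by the two distinct (twin) vertices $C_1=\langle p_2^{\alpha_2}p_3^{\alpha_3}\rangle$ and $C_2=\langle p_1p_2^{\alpha_2}p_3^{\alpha_3}\rangle$; with $D=\langle p_1^{\alpha_1}\rangle$, $A=\langle p_1^{\alpha_1}p_2^{\alpha_2}\rangle$ and $B=\langle p_1^{\alpha_1}p_3^{\alpha_3}\rangle$, the four hyperedges $\{D,C_1\}$, $\{A,B,C_1\}$, $\{A,B,C_2\}$, $\{D,C_2\}$ have consecutive intersections $C_1,\{A,B\},C_2,D$ and empty diagonals. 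Finally, when $\omega(n)=k\geq 4$, I would use the four hyperedges whose $Z$-blocks are the partitions $\{\{1\},\{4\},\{2,3\}\cup R\}$, $\{\{1\},\{2\},\{3,4\}\cup R\}$, $\{\{2\},\{3\},\{1,4\}\cup R\}$, $\{\{3\},\{4\},\{1,2\}\cup R\}$ of $\{1,\dots,k\}$, where $R=\{5,\dots,k\}$, choosing the canonical vertex (all non-maximal exponents $0$) for each block: consecutive partitions share one singleton block, hence a common vertex, while opposite partitions share no block, hence are disjoint.

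The routine-but-essential part, and the main thing to get right, is the hyperedge bookkeeping: one must check that each exhibited set really is a \emph{maximal} clique (a genuine hyperedge) and that the two diagonal pairs are genuinely disjoint, uniformly in the exponents. Here the maximal-set/partition description does the work, since maximality of a clique is exactly ``its $Z$-blocks partition $\{1,\dots,k\}$'' and disjointness of two hyperedges is ``the two partitions share no block,'' reducing the checks to elementary set manipulation. The one genuinely delicate point I expect is the case $\omega(n)=3$: the good case $n=p_1p_2p_3$ and the bad cases differ precisely by whether some maximal set repeats (equivalently, whether some $\alpha_i\geq 2$), which is exactly what makes the twin vertices $C_1,C_2$ available to close the $4$-cycle, so tracking this repetition is the crux that separates the two subcases.
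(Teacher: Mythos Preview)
Your proof is correct, and for the converse direction it follows a genuinely different route from the paper. The paper chooses four \emph{vertices} $H_1,H_2,H_3,H_4$ with $H_i\sim H_{i+1}$ cyclically and asserts that in any host graph the four connecting paths concatenate to a cycle; this last step is left unjustified (the union of four paths in a tree need not be a cycle), and the specific choice of $H_3,H_4$ tacitly needs some $\alpha_i\ge 2$ to keep the four vertices distinct. You instead work one level up with four \emph{hyperedges} whose pairwise intersections form a $4$-cycle pattern, and you supply the clean obstruction that does the real work: two disjoint subtrees of a tree are separated by an edge, and any subtree meeting both must contain that edge, forcing $F_2\cap F_4\neq\emptyset$. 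Your partition description of hyperedges (the $Z$-sets of a hyperedge partition $\{1,\dots,k\}$, with one representative per block) is an additional structural gain: it turns both the maximality checks and the disjointness-of-diagonals checks into elementary set identities, and in particular handles the square-free case $\omega(n)\ge 4$ uniformly. The sufficiency direction (star for $n=p_1^{\alpha_1}p_2$, explicit host tree for $n=p_1p_2p_3$) matches the paper's argument.
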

	\begin{proof} Consider the following cases:\\
		\textbf{Case 1.} Suppose that $n=p_1^{\alpha_1} p_2$ , where $p_1,p_2$ are distinct primes and $\alpha_1 \geq 1$. Then, by Theorem \ref{starhyp}, $\tilde{\Gamma}_\mathcal{H}(\mathbb{Z}_n)$ is a star hypergraph and therefore, $\tilde{\Gamma}_\mathcal{H}(\mathbb{Z}_n)$ is a hypertree.

		\noindent\textbf{Case 2.} Now, suppose that $n=p_1p_2p_3$,  where $p_1,p_2,p_3$ are distinct primes. The Figure \ref{hosttree}(a) depicts a host tree for $\tilde{\Gamma}_\mathcal{H}(\mathbb{Z}_n)$. Hence, $\tilde{\Gamma}_\mathcal{H}(\mathbb{Z}_n)$ is a hypertree.

		\noindent Conversely, suppose that  $n=p_1^{\alpha_1}p_2^{\alpha_2}p_3^{\alpha_3}\ldots p_k^{\alpha_k}$, where $p_1,p_2, \ldots, p_k$ are distinct primes, $\alpha_1, \alpha_2,$ $ \ldots, \alpha_k$ are non-negative integers and $k\geq 2$ with $n \neq p_1^{\alpha_1}p_2$ and  $n \neq p_1 p_2 p_3$. We will prove that there is no host tree for $\tilde{\Gamma}_\mathcal{H}(\mathbb{Z}_n)$. Let $\mathcal{G}_\mathcal{H}$ be a host graph for $\tilde{\Gamma}_\mathcal{H}(\mathbb{Z}_n)$. Note that any two vertices in the same hyperedge of  $\tilde{\Gamma}_\mathcal{H}(\mathbb{Z}_n)$ have a path  in $\mathcal{G}_\mathcal{H}$.  Consider the vertices $H_1=<p_1^{\alpha_1}>, H_2= \\<p_2^{\alpha_2}p_3^{\alpha_3}\ldots p_k^{\alpha_k}>,$ $ H_3=<p_1^{\alpha_1}p_2^{\alpha_2 -1}p_3^{\alpha_3}\ldots p_k^{\alpha_k}>$ and $H_4=<p_1^{\alpha_1 -1} p_2^{\alpha_2}p_3^{\alpha_3}\ldots p_k^{\alpha_k}>$. Observe that $H_{i} \cap H_{i+1} =\{e\}, \forall i \in \{1,2,3\}$ and $H_1 \cap H_4=\{e\}$. Thus, there exist a path $P_1$ from $H_1$ to $H_2$, a path  $P_2$  from $H_2$ to $H_3$, a path $P_3$ from $H_3$ to $H_4$ and a path $P_4$ from $H_4$ to $H_1$ in $\mathcal{G}_\mathcal{H}$. Therefore, $C=P_1 \cup P_2 \cup P_3 \cup P_4$ is a cycle in $G_\mathcal{H}$  and hence,  $G_\mathcal{H}$ is not a tree, see Figure \ref{hosttree}(b). Since $G_\mathcal{H}$ is an arbitrary host graph of $\tilde{\Gamma}_\mathcal{H}(\mathbb{Z}_n)$, we conclude that there does not exist a host tree for  $\tilde{\Gamma}_\mathcal{H}(\mathbb{Z}_n)$. Thus, $\tilde{\Gamma}_\mathcal{H}(\mathbb{Z}_n)$ is not a hypertree.
			\begin{figure}[h]
			\centering
			
			\subfloat[]{\begin{tikzpicture}[scale=.6]
					
					% Draw ellipses with clear borders
					
					% Add nodes

					\filldraw (-.7,-0.2cm) circle (2pt) node[above] {\footnotesize{$<p_2p_3>$}};
					
					\filldraw (3.8, -0.2cm) circle (2pt) node[above] {\footnotesize{$<p_1p_2>$}};
					\filldraw (1.5, -0.2cm) circle (2pt) node[above] {\footnotesize{$<p_1p_3>$}};
					\filldraw (3.8, -1.6cm) circle (2pt) node[below] {\footnotesize{$<p_3>$}};
					\filldraw (-0.7, -1.6cm) circle (2pt) node[below] {\footnotesize{$<p_1>$}};
					\filldraw (1.5, -1.6cm) circle (2pt) node[below] {\footnotesize{$<p_2>$}};
					
					\draw (-.7,-0.2cm) -- (1.5, -0.2cm);
					\draw (1.5, -0.2cm) -- (3.8, -0.2cm);
					\draw (-.7,-0.2cm) -- (-0.7, -1.6cm);
					\draw (1.5, -0.2cm) -- (1.5, -1.6cm);
					\draw (3.8, -1.6cm) -- (3.8, -0.2cm);
			\end{tikzpicture}}
			\hspace{.3cm}
			\subfloat[]{\begin{tikzpicture}[scale=.6]
					
					% Draw ellipses with clear borders

					% Add nodes

					\filldraw (-.5,-0.2cm) circle (2pt) node[left] {\footnotesize{$<p_1^{\alpha_1}>$}};
					
					\filldraw (3.5, -0.2cm) circle (2pt) node[right] {\footnotesize{$<p_2^{\alpha_2}p_3^{\alpha_3}\ldots p_k^{\alpha_k}>$}};
					
					\filldraw (3.5, -1.6cm) circle (2pt) node[right] {\footnotesize{$<p_1^{\alpha_1}p_2^{\alpha_2 -1}p_3^{\alpha_3}\ldots p_k^{\alpha_k}>$}};
					\filldraw (-0.5, -1.6cm) circle (2pt) node[left] {\footnotesize{$<p_1^{\alpha_1 -1} p_2^{\alpha_2}p_3^{\alpha_3}\ldots p_k^{\alpha_k}>$}};
					
					\node at (1.5, 0.1cm) {\footnotesize{$P_1$}};
					\node at (3.9, -.9cm) {\footnotesize{$P_2$}};
					\node at (1.5, -1.9cm) {\footnotesize{$P_3$}};
					\node at (-.9, -.9cm) {\footnotesize{$P_4$}};

					\draw (-.5,-0.2cm) -- (1.5, -0.2cm);
					\draw (1.5, -0.2cm) -- (3.5, -0.2cm);
					\draw (-.5,-0.2cm) -- (-0.5, -1.6cm);
					\draw (3.5, -1.6cm) -- (-0.5, -1.6cm);
					\draw (3.5, -1.6cm) -- (3.5, -0.2cm);
			\end{tikzpicture}}
			\caption{}
			\label{hosttree}
			
		\end{figure}		
		\end{proof}
	
	\begin{corollary}
		If $n=p_1p_2p_3$, where $p_1,p_2$ and $p_3$ are distinct primes, then $\tilde{\Gamma}_\mathcal{H}(\mathbb{Z}_n)$ is a hypertree, but not a star hypergraph.
	\end{corollary}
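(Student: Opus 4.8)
The plan is to derive this corollary directly from the two characterization theorems that immediately precede it, since the hypothesis $n = p_1 p_2 p_3$ falls squarely into the cases they cover. First I would observe the decisive numerical fact: the integer $n = p_1 p_2 p_3$ has exactly three distinct prime divisors, i.e.\ $\omega(n) = 3$.

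For the hypertree conclusion, I would simply invoke the hypertree characterization theorem just proved, which asserts that $\tilde{\Gamma}_\mathcal{H}(\mathbb{Z}_n)$ is a hypertree precisely when $n = p_1^{\alpha_1} p_2$ or $n = p_1 p_2 p_3$. Since our $n$ is literally of the second listed form, the hypertree property is immediate with no further work. For the negative part, I would appeal to Theorem \ref{starhyp}, which states that $\tilde{\Gamma}_\mathcal{H}(\mathbb{Z}_n)$ is a star hypergraph if and only if $n = p_1^{\alpha_1} p_2$ with $p_1, p_2$ distinct primes and $\alpha_1 \geq 1$. Any integer of the form $p_1^{\alpha_1} p_2$ satisfies $\omega = 2$, whereas $\omega(p_1 p_2 p_3) = 3$; hence $n = p_1 p_2 p_3$ cannot be written in the required form, and Theorem \ref{starhyp} forces $\tilde{\Gamma}_\mathcal{H}(\mathbb{Z}_n)$ to fail to be a star hypergraph.

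There is essentially no obstacle here beyond recording the observation that the two prime-factorization shapes $p_1^{\alpha_1} p_2$ and $p_1 p_2 p_3$ are genuinely distinct, which is guaranteed by the distinctness of $p_1, p_2, p_3$ and the resulting value $\omega(n) = 3$. Combining the two applications yields that $\tilde{\Gamma}_\mathcal{H}(\mathbb{Z}_n)$ is a hypertree that is not a star hypergraph, completing the argument. This also concretely illustrates the remark preceding the theorem that every star hypergraph is a hypertree but not conversely.
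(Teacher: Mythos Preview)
Your proposal is correct and matches the paper's intended approach: the corollary is stated without proof in the paper precisely because it follows immediately from the hypertree characterization theorem together with Theorem~\ref{starhyp}, exactly as you argue via the observation $\omega(p_1p_2p_3)=3\neq 2$.
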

	\begin{corollary}
		If $\omega(n) > 3$, then $\tilde{\Gamma}_\mathcal{H}(\mathbb{Z}_n)$ is not a hypertree. 
	\end{corollary}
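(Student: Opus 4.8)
The plan is to derive this directly from the hypertree characterization theorem that immediately precedes it. The statement to prove is: if $\omega(n) > 3$, then $\tilde{\Gamma}_\mathcal{H}(\mathbb{Z}_n)$ is not a hypertree. The key observation is that the preceding theorem gives a complete characterization: $\tilde{\Gamma}_\mathcal{H}(\mathbb{Z}_n)$ is a hypertree if and only if $n = p_1^{\alpha_1}p_2$ or $n = p_1p_2p_3$, where the $p_i$ are distinct primes. Both of these cases force a specific number of distinct prime divisors.

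First I would count the number of distinct prime divisors in each of the two permitted forms. If $n = p_1^{\alpha_1}p_2$, then $\omega(n) = 2$. If $n = p_1p_2p_3$, then $\omega(n) = 3$. Hence in every case where $\tilde{\Gamma}_\mathcal{H}(\mathbb{Z}_n)$ is a hypertree, we necessarily have $\omega(n) \leq 3$. Taking the contrapositive, if $\omega(n) > 3$, then $n$ is of neither permitted form, so by the hypertree characterization theorem $\tilde{\Gamma}_\mathcal{H}(\mathbb{Z}_n)$ cannot be a hypertree. That is the entire argument.

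There is essentially no obstacle here, since the corollary is a trivial specialization of the already-proven characterization. The only point requiring a moment's care is to confirm that neither canonical form can accommodate $\omega(n) > 3$: the form $p_1^{\alpha_1}p_2$ has exactly two prime bases regardless of the value of $\alpha_1$, and the form $p_1p_2p_3$ has exactly three, so no choice of exponents or primes in these templates can raise $\omega(n)$ above $3$. Since $\omega(n) > 3$ excludes both templates, the conclusion follows immediately. I would present this as a one- or two-sentence deduction invoking the characterization theorem, with the prime-counting remark supplied for clarity.
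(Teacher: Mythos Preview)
Your proposal is correct and matches the paper's approach: the corollary is stated in the paper without proof, as it follows immediately from the preceding hypertree characterization theorem by the contrapositive argument you give.
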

	\noindent Before analyzing the girth $gr(\tilde{\Gamma}_\mathcal{H}(\mathbb{Z}_n))$ of $\mathbb{Z}_n$, let us recall the complement of the intersection graph of a group. In \cite{devicomplement}, the complement of the intersection graph of subgroups of a group G, denoted by  $\mathscr{I}^c(G)$, is the graph whose vertex set is the set of all nontrivial proper subgroups of
	G and its distinct vertices H and K are adjacent if and only if $H \cap K =\{e\}$. Let  $\mathscr{I}^c(G)^{\star}$ denote the graph obtained from  $\mathscr{I}^c(G)$ by removing all the isolated vertices. Note that $gr(\mathscr{I}^c(G)^{\star})=gr(\mathscr{I}^c(G))$ for all $G$.

			\begin{lemma}\cite{devicomplement}\label{trianglefree}
			Let $G$ be a finite group. Then, the following are equivalent.
			\begin{enumerate}
				\item $G$ is isomorphic to one of $\mathbb{Z}_{p^\alpha}(\alpha \geq 1), Q_{2^\alpha}$ or $\mid G \mid = p^\alpha q^\beta(\alpha,\beta \geq 1)$ with $G$ has a unique subgroup of each of distinct prime orders $p,q$;
				\item $\mathscr{I}^c(G)$ is bipartite;
				\item $\mathscr{I}^c(G)$ is $C_3$- free.
			\end{enumerate}
		\end{lemma}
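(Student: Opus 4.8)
The plan is to prove the cyclic chain of implications $(2)\Rightarrow(3)\Rightarrow(1)\Rightarrow(2)$. Throughout I would use the observation that a triangle in $\mathscr{I}^c(G)$ is precisely a set of three nontrivial proper subgroups that pairwise intersect in $\{e\}$. The implication $(2)\Rightarrow(3)$ is immediate: a triangle $C_3$ is an odd cycle, and a bipartite graph contains no odd cycle, so if $\mathscr{I}^c(G)$ is bipartite it is in particular $C_3$-free.

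For $(3)\Rightarrow(1)$ I would argue by contraposition, splitting according to the number $\omega(|G|)$ of distinct prime divisors of $|G|$. If $\omega(|G|)\geq 3$, pick three distinct primes $p,q,r$ dividing $|G|$; by Cauchy's theorem there are subgroups of orders $p,q,r$, and since these orders are pairwise coprime the three subgroups pairwise meet in $\{e\}$ and form a triangle. If $\omega(|G|)=2$, say $|G|=p^{\alpha}q^{\beta}$, and $G$ fails to have a unique subgroup of one of the prime orders, say two distinct subgroups $P_1,P_2$ of order $p$, then together with a subgroup of order $q$ (again Cauchy) they give three pairwise-trivially-intersecting proper subgroups, hence a triangle. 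Finally, if $\omega(|G|)=1$, so $G$ is a $p$-group that is neither cyclic nor generalized quaternion, then the classical classification of $p$-groups with a unique subgroup of order $p$ shows $G$ has more than one subgroup of order $p$; as their number is $\equiv 1 \pmod p$ there are at least $p+1\geq 3$ of them, and any three distinct order-$p$ subgroups pairwise intersect in $\{e\}$, forming a triangle. In every case $C_3$-freeness forces $G$ into the list of $(1)$.

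For $(1)\Rightarrow(2)$ I would verify bipartiteness for each family. When $G\cong\mathbb{Z}_{p^{\alpha}}$ the subgroups form a chain, so any two nontrivial subgroups are comparable and meet nontrivially; thus $\mathscr{I}^c(G)$ is edgeless and trivially bipartite. When $G\cong Q_{2^{\alpha}}$, every nontrivial subgroup contains the unique involution (the center), so no two nontrivial subgroups have trivial intersection and the graph is again edgeless. When $|G|=p^{\alpha}q^{\beta}$ with a unique subgroup $P$ of order $p$ and a unique subgroup $Q$ of order $q$, I would classify each vertex $H$ by whether $|H|$ is a power of $p$, a power of $q$, or divisible by both; by uniqueness together with Cauchy's theorem, $P\leq H$ whenever $p\mid|H|$ and $Q\leq H$ whenever $q\mid|H|$. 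Hence two subgroups can intersect trivially only if one is a nontrivial $p$-group and the other a nontrivial $q$-group, so all edges run between the $p$-power part and the $q$-power part (the mixed subgroups being isolated), which is exactly a bipartition.

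The main obstacle is the $\omega(|G|)=1$ step of $(3)\Rightarrow(1)$: it rests on the nontrivial classical theorem that a finite $p$-group possesses a unique subgroup of order $p$ exactly when it is cyclic or, for $p=2$, generalized quaternion. Granting that structural input, every remaining step is elementary, relying only on Cauchy's theorem, the coprimality of subgroup orders, and the uniqueness hypotheses to determine which pairs of subgroups intersect trivially.
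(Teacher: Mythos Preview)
Your argument is correct. Note, however, that the paper does not supply its own proof of this lemma: it is quoted verbatim from \cite{devicomplement} and used as a black box (to pin down the girth of $\tilde{\Gamma}_\mathcal{H}(\mathbb{Z}_n)$ in the $\omega(n)=2$ case). So there is nothing in the present paper to compare against; you have written a self-contained proof where the authors simply cite one. Your cyclic scheme $(2)\Rightarrow(3)\Rightarrow(1)\Rightarrow(2)$ is the natural one, and each step is sound: the contrapositive case split on $\omega(|G|)$ for $(3)\Rightarrow(1)$ is clean, the appeal to the classification of $p$-groups with a unique subgroup of order $p$ (cyclic or generalized quaternion) is exactly the structural input needed, and the congruence $\#\{H\leq G:|H|=p\}\equiv 1\pmod p$ correctly forces at least three such subgroups once there are at least two. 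For $(1)\Rightarrow(2)$ your bipartition in the $p^\alpha q^\beta$ case is right: uniqueness of the order-$p$ subgroup forces it into every subgroup of order divisible by $p$, and likewise for $q$, so edges can only join pure $p$-subgroups to pure $q$-subgroups.
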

			\begin{remark}
			$\tilde{\Gamma}_\mathcal{H}(G)$ is the clique hypergraph of $\mathscr{I}^c(G)^{\star}$, i.e., the hyperedges of $\tilde{\Gamma}_\mathcal{H}(G)$  are the maximal cliques of $\mathscr{I}^c(G)^{\star}$. 
		\end{remark}
		\begin{remark}
		Let $\mathcal{G}$ be a graph and $Cl_\mathcal{H}(\mathcal{G})$ denote the clique hypergraph of $\mathcal{G}$. Then, 	$gr(\mathcal{G})=gr(Cl_\mathcal{H}(\mathcal{G}))$ iff $\mathcal{G}$ contains no subgraph isomorphic to the graph shown in Figure \ref{girth}.
			\begin{figure}[h]
				\centering
				\begin{tikzpicture}[scale=1.5, every node/.style={circle, fill=black, minimum size=6pt, inner sep=0pt}]
					
					% Nodes in set U
					\node[label=left:{}] (u1) at (-1,.5) {};
					\node[label=below:{}] (u2) at (0,0) {};
					\node[label=right:{}] (u3) at (0,1) {};
					\node[label=left:{}] (u4) at (1,.5) {};

					% Edges
					\draw (u1) -- (u2);
					\draw (u1) -- (u3);
					\draw (u2) -- (u3);
					\draw (u2) -- (u4);
					\draw (u3) -- (u4);
				\end{tikzpicture}
				\caption{}
				\label{girth}
			\end{figure}
			
		\end{remark}
		\begin{theorem}\label{girtheq}
			Let $G$ be a finite group. Then,  $gr(\mathscr{I}^c(G))=gr(\tilde{\Gamma}_\mathcal{H}(G))$ if and only if  $G$ is isomorphic to one of $\mathbb{Z}_{p^\alpha}(\alpha \geq 1), Q_{2^\alpha}$ or $\mid G \mid = p^\alpha q^\beta(\alpha,\beta \geq 1)$ with $G$ has a unique subgroup of each of distinct prime orders $p,q$.
		\end{theorem}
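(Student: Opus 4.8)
The plan is to use the two remarks immediately preceding the statement as the backbone of the argument, so that the theorem becomes a translation of the graph-theoretic girth equivalence into the language of $\tilde{\Gamma}_\mathcal{H}(G)$. By the first remark, $\tilde{\Gamma}_\mathcal{H}(G)$ is precisely the clique hypergraph $Cl_\mathcal{H}(\mathscr{I}^c(G)^\star)$. By the second remark, the equality $gr(\mathscr{I}^c(G)^\star)=gr(Cl_\mathcal{H}(\mathscr{I}^c(G)^\star))$ holds if and only if $\mathscr{I}^c(G)^\star$ contains no subgraph isomorphic to the graph $F$ of Figure \ref{girth} (two triangles sharing an edge, i.e.\ $K_4$ minus an edge). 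Combining these with the already-noted fact that $gr(\mathscr{I}^c(G)^\star)=gr(\mathscr{I}^c(G))$, the theorem reduces to the following purely combinatorial claim: \emph{$\mathscr{I}^c(G)$ is $F$-free if and only if $G$ is one of the listed groups}.

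First I would establish the easy direction. If $G$ is isomorphic to $\mathbb{Z}_{p^\alpha}$, then $\mathscr{I}^c(G)$ has no edges at all (any two nontrivial subgroups are comparable), so it is trivially $F$-free and the girth equality holds vacuously. For the other two families, I would invoke Lemma \ref{trianglefree}: in each of these cases $\mathscr{I}^c(G)$ is $C_3$-free (triangle-free). Since the forbidden graph $F$ contains a triangle, any $C_3$-free graph is automatically $F$-free. Hence for every group on the list, $\mathscr{I}^c(G)^\star$ is $F$-free, and the second remark gives $gr(\mathscr{I}^c(G))=gr(\tilde{\Gamma}_\mathcal{H}(G))$.

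For the converse I would argue contrapositively: assume $G$ is \emph{not} on the list and produce a copy of $F$ in $\mathscr{I}^c(G)$, which by the second remark forces $gr(\mathscr{I}^c(G))\neq gr(\tilde{\Gamma}_\mathcal{H}(G))$. By Lemma \ref{trianglefree}, a group off the list has $\mathscr{I}^c(G)$ containing a triangle, say on vertices $A,B,C$ with pairwise trivial intersection. The task is then to find a fourth subgroup $D$ adjacent to two of these three (say $A$ and $B$) with $D\cap C$ possibly nontrivial, so that $\{A,B,C,D\}$ induces $F$ rather than $K_4$; equivalently, to exhibit two triangles sharing exactly one edge. The main obstacle is exactly this step: I must show that the presence of one triangle in $\mathscr{I}^c(G)$ (for a non-listed $G$) always propagates to an $F$-subgraph. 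I expect this to follow from structural flexibility in $G$ once it has either three distinct prime divisors or two primes with a non-unique subgroup of one prime order. In the cyclic case $\mathbb{Z}_n$, where the hypergraph is the object of study, this is the most transparent: if $\omega(n)\geq 3$ one can take $A=\langle p_1\rangle$-type subgroups and build the two triangles explicitly, and if $\omega(n)=2$ with a repeated prime one still has enough intermediate subgroups.

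To keep the argument self-contained I would present the converse constructively for the groups actually relevant to the paper, namely the cyclic groups $\mathbb{Z}_n$, supplying the explicit fourth subgroup $D$ in each offending configuration, and cite Lemma \ref{trianglefree} together with the two remarks to handle the general finite-group statement. The crux of the write-up is thus translating ``$F$-free'' into ``every edge lies in at most one maximal clique'' and verifying that the listed groups are precisely those whose complement intersection graph has this local property; once that equivalence is in place, the theorem is immediate from the preceding remarks.
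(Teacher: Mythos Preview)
The paper gives no proof of Theorem~\ref{girtheq}; it is stated directly after the two remarks and Lemma~\ref{trianglefree} and is evidently meant to be read as their immediate combination. Your proposal reconstructs precisely that intended derivation, so in that sense you match what the paper does.

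Where you go beyond the paper is in noticing that the combination is not actually complete. After the reduction, the claim becomes ``$\mathscr{I}^c(G)^\star$ is $F$-free if and only if $G$ lies on the list'', and by Lemma~\ref{trianglefree} the list is exactly the class of $G$ with $\mathscr{I}^c(G)$ triangle-free. The direction $C_3$-free $\Rightarrow$ $F$-free is trivial; the other direction requires that whenever $\mathscr{I}^c(G)$ contains a triangle it already contains the diamond $F$. You correctly isolate this as the crux, but then only sketch the construction for cyclic $G$ and defer the general finite-group case to ``structural flexibility''. Since the theorem is stated for arbitrary finite $G$, this is a genuine gap: given nontrivial proper subgroups $A,B,C$ with pairwise trivial intersection, you must exhibit a fourth nontrivial proper subgroup $D\neq C$ with $D\cap A=D\cap B=\{e\}$, and this has to work for every finite group off the list (including non-abelian ones). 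Neither the paper nor your proposal supplies this argument in general, so the converse remains unproven in both; if you cannot close it, the honest fix is to restrict the statement to the cyclic groups actually used later in the paper.
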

	\begin{theorem}
		Let $p_1,p_2$ and $p_3$ be distinct primes. Then, the girth $gr(\tilde{\Gamma}_\mathcal{H}(\mathbb{Z}_n))$ of $\tilde{\Gamma}_\mathcal{H}(\mathbb{Z}_n)$ is given by, \\	$$gr(\tilde{\Gamma}_\mathcal{H}(\mathbb{Z}_n)) =
		\begin{cases*}
			\infty,  \, \text{if} \, n=p_1^{\alpha_1} p_2, \text{where}\,  \alpha_1 \geq 1 \,  \text{or}\, n=p_1p_2p_3,\\
			4, \,  \text{if}\, n = p_1^{\alpha_1}
			 p_2^{\alpha_2},\text{where} \, \alpha_1, \alpha_2 \geq 2,\\ 
			2, \,  \, \text{otherwise}.
		\end{cases*}$$ 
	\end{theorem}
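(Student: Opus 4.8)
The plan is to split on $\omega(n)$, the number of distinct prime divisors of $n$, using the auxiliary graph $\mathscr{I}^c(\mathbb{Z}_n)$ when $\omega(n)=2$ and arguing directly on the hyperedges (maximal cliques of $\mathscr{I}^c(\mathbb{Z}_n)^{\star}$) when $\omega(n)\ge 3$. Throughout I read off adjacency from Lemma~\ref{lemma2}: writing subgroups by their exponent vectors, $H_1\sim H_2$ exactly when each coordinate is maximal in at least one of the two generators.

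For $\omega(n)=2$, write $n=p_1^{\alpha_1}p_2^{\alpha_2}$. Since $\mathbb{Z}_n$ is cyclic it has a unique subgroup of each prime order, so Theorem~\ref{girtheq} applies and gives $gr(\tilde{\Gamma}_\mathcal{H}(\mathbb{Z}_n))=gr(\mathscr{I}^c(\mathbb{Z}_n))$. I would then identify $\mathscr{I}^c(\mathbb{Z}_n)^{\star}$ explicitly: by Theorem~\ref{vertexset} each vertex is either of type $A$, namely $\langle p_1^{\alpha_1}p_2^{r_2}\rangle$ with $0\le r_2\le\alpha_2-1$ (there are $\alpha_2$ of these), or of type $B$, namely $\langle p_1^{r_1}p_2^{\alpha_2}\rangle$ with $0\le r_1\le\alpha_1-1$ ($\alpha_1$ of these). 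A short check with Lemma~\ref{lemma2} shows that no two vertices of the same type are adjacent while every type-$A$ vertex is adjacent to every type-$B$ vertex; hence $\mathscr{I}^c(\mathbb{Z}_n)^{\star}\cong K_{\alpha_2,\alpha_1}$. Reading off the girth of a complete bipartite graph finishes this case: if $\alpha_1=1$ or $\alpha_2=1$, i.e. $n=p_1^{\alpha_1}p_2$, the graph is a star, hence a tree, and the girth is $\infty$; if $\alpha_1,\alpha_2\ge 2$ it contains a $4$-cycle but no triangle, so the girth is $4$.

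For $\omega(n)\ge 3$ and $n\ne p_1p_2p_3$ I would prove girth $2$ by exhibiting two distinct hyperedges sharing two vertices (no cycle can have length below $2$, so this is optimal). It suffices to produce a \emph{diamond} (the graph of Figure~\ref{girth}): vertices $u,v,w_1,w_2$ with $u\sim v$, with $w_1,w_2$ each adjacent to $u$ and to $v$, but $w_1\not\sim w_2$. Then a maximal clique through $\{u,v,w_1\}$ and one through $\{u,v,w_2\}$ are distinct hyperedges (no clique contains both $w_1$ and $w_2$) both containing $\{u,v\}$, giving a Berge $2$-cycle. I split into (a) $\omega(n)=3$ with some exponent $\ge 2$, say $\alpha_1\ge 2$, taking $u=\langle p_1^{\alpha_1}p_2^{\alpha_2}\rangle$, $v=\langle p_1^{\alpha_1}p_3^{\alpha_3}\rangle$, $w_1=\langle p_2^{\alpha_2}p_3^{\alpha_3}\rangle$, $w_2=\langle p_1 p_2^{\alpha_2}p_3^{\alpha_3}\rangle$ (here $w_1\not\sim w_2$ precisely because their first coordinate only reaches $1<\alpha_1$); and (b) $\omega(n)=k\ge 4$, taking $u=\langle p_1^{\alpha_1}p_2^{\alpha_2}p_3^{\alpha_3}\rangle$, $v=\langle p_1^{\alpha_1}p_2^{\alpha_2}p_4^{\alpha_4}\cdots p_k^{\alpha_k}\rangle$, $w_1=\langle p_2^{\alpha_2}\cdots p_k^{\alpha_k}\rangle$, $w_2=\langle p_3^{\alpha_3}\cdots p_k^{\alpha_k}\rangle$. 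In each case the five required adjacencies, the single non-adjacency, and the fact that $u,v,w_1,w_2$ are genuine proper non-trivial vertices all follow mechanically from Lemma~\ref{lemma2} and Theorem~\ref{vertexset}.

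The remaining case $n=p_1p_2p_3$ yields girth $\infty$, and this is where the care lies and which I expect to be the main obstacle. One cannot invoke Theorem~\ref{girtheq} here (as $\omega(n)=3$), and indeed $\mathscr{I}^c(\mathbb{Z}_n)$ contains the triangle $\langle p_1p_2\rangle,\langle p_1p_3\rangle,\langle p_2p_3\rangle$, so its graph girth is $3\neq\infty$; moreover the mere fact that $\tilde{\Gamma}_\mathcal{H}(\mathbb{Z}_n)$ is a hypertree does not by itself forbid Berge cycles. I would therefore list the hyperedges directly: the maximal cliques are $B=\{\langle p_1p_2\rangle,\langle p_1p_3\rangle,\langle p_2p_3\rangle\}$ together with $S_i=\{\langle p_i\rangle,\langle n/p_i\rangle\}$ for $i=1,2,3$. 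Two structural observations then close the case: (i) any two distinct hyperedges meet in at most one vertex (the hypergraph is linear), which excludes $2$-cycles; and (ii) the intersection graph of the hyperedges, in which $B$ is joined to each $S_i$ and the $S_i$ are pairwise disjoint, is a star and hence acyclic, so together with linearity it admits no Berge cycle of length $\ge 3$. Thus $\tilde{\Gamma}_\mathcal{H}(\mathbb{Z}_{p_1p_2p_3})$ is Berge-acyclic and has girth $\infty$. Reconciling the short cycles already present in $\mathscr{I}^c(\mathbb{Z}_n)$ with the absence of cycles in its clique hypergraph is exactly the delicate point, forcing the explicit hyperedge analysis in this subcase rather than an appeal to Theorem~\ref{girtheq}.
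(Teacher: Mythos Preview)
Your proof is correct and follows essentially the same case split and constructions as the paper: Theorem~\ref{girtheq} for $\omega(n)=2$, the same diamond configurations for the girth-$2$ cases, and an explicit hyperedge enumeration for $n=p_1p_2p_3$. Your identification $\mathscr{I}^c(\mathbb{Z}_n)^{\star}\cong K_{\alpha_1,\alpha_2}$ and your linearity-plus-line-graph argument for $n=p_1p_2p_3$ are tidier than the paper's treatment (which exhibits a particular $4$-cycle and appeals to Figure~\ref{d41}, respectively), but the underlying approach is the same.
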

	\begin{proof}
		Consider the following cases:\\
\textbf{Case 1.} If $ n=p_1^{\alpha_1} p_2, \text{where}\,  \alpha_1 \geq 1$, then by Theorem \ref{starhyp}$, \tilde{\Gamma}_\mathcal{H}(\mathbb{Z}_n)$ is a 2-uniform star hypergraph, i.e., simply a star graph. Hence, there does not exist a cycle and therefore, $gr(\tilde{\Gamma}_\mathcal{H}(\mathbb{Z}_n)) = \infty$.\\
\textbf{Case 2.} If $n=p_1p_2p_3$, then as depicted in the Figure \ref{d41},  $\tilde{\Gamma}_\mathcal{H}(\mathbb{Z}_{pqr})$ does not contain a cycle and therefore, $gr(\tilde{\Gamma}_\mathcal{H}(\mathbb{Z}_n)) = \infty$.\\
\begin{figure}[h]
	\centering
	\begin{tikzpicture}[scale=.6]
		
		% Draw ellipses with clear borders
		\draw[thick] (1.3,0) ellipse (5 and 1);
		\draw[thick] (1.5,-.1) ellipse (1.2 and 2);
		\draw[thick] (-1.2,-.1) ellipse (1.2 and 2);
		\draw[thick] (4.1,0) ellipse (1.2 and 2);

		% Add nodes

		\filldraw (-1.1,-0.2cm) circle (2pt) node[above] {\footnotesize{$<p_2p_3>$}};
		
		\filldraw (4.1, -0.2cm) circle (2pt) node[above] {\footnotesize{$<p_1p_2>$}};
		\filldraw (1.5, -0.2cm) circle (2pt) node[above] {\footnotesize{$<p_1p_3>$}};
		\filldraw (4, -1.7cm) circle (2pt) node[above] {\footnotesize{$<p_3>$}};
		\filldraw (-1.1, -1.7cm) circle (2pt) node[above] {\footnotesize{$<p_1>$}};
		\filldraw (1.5, -1.8cm) circle (2pt) node[above] {\footnotesize{$<p_2>$}};

	\end{tikzpicture}
	\caption{}
	\label{d41}
	
\end{figure}\\
\noindent \textbf{Case 3.} If $n = p_1^{\alpha_1}
p_2^{\alpha_2},\text{where} \, \alpha_1, \alpha_2 \geq 2,$ then, by Lemma \ref{trianglefree} and Theorem \ref{girtheq}, $ gr(\tilde{\Gamma}_\mathcal{H}(\mathbb{Z}_n)) =  gr(\mathscr{I}^c(\mathbb{Z}_n))\geq 4$. Observe that $<p_1 p_2^{\alpha_2}> \cap <p_1^{\alpha_1} p_2>=\{e\}$,  $<p_1^{\alpha_1} p_2> \cap <p_2^{\alpha_2}>=\{e\}$, $<p_2^{\alpha_2}> \cap <p_1^{\alpha_1}>=\{e\}$ and $<p_1^{\alpha_1}> \cap <p_1 p_2^{\alpha_2}>=\{e\}$. Thus, there exist four hyperedges $e_1,e_2,e_3$ and $e_4$ such that $e_1$ contains $<p_1 p_2^{\alpha_2}>$ and $<p_1^{\alpha_1} p_2>$, $e_2$ contains $<p_1^{\alpha_1} p_2>$ and $<p_2^{\alpha_2}>$,  $e_3$ contains $<p_2^{\alpha_2}>$ and $<p_1^{\alpha_1}>$, and $e_4$ contains $<p_1^{\alpha_1}>$ and $<p_1 p_2^{\alpha_2}>$. Hence, $<p_1 p_2^{\alpha_2}> e_1 <p_1^{\alpha_1} p_2> e_2 <p_2^{\alpha_2}> e_3 <p_1^{\alpha_1}> e_4 <p_1 p_2^{\alpha_2}>$ is a cycle in $\tilde{\Gamma}_\mathcal{H}(\mathbb{Z}_n)$ of length 4 and therefore, $gr(\tilde{\Gamma}_\mathcal{H}(\mathbb{Z}_n))=4$.\\
\textbf{Case 4.} Suppose $n= p_1^{\alpha_1}p_2^{\alpha_2}p_3^{\alpha_3}$, where $\alpha_1, \alpha_2$ and $\alpha_3$ are positive integers and atleast one of $\alpha_i$ is greater than 1. Without loss of generality, assume that $\alpha_1 > 1$. Then, observe that $<p_1^{\alpha_1}p_2^{\alpha_2}> \cap <p_1^{\alpha_1}p_3^{\alpha_3}>=\{e\}, <p_1^{\alpha_1}p_2^{\alpha_2}> \cap <p_2^{\alpha_2}p_3^{\alpha_3}>=\{e\},<p_1^{\alpha_1}p_3^{\alpha_3}> \cap <p_2^{\alpha_2}p_3^{\alpha_3}>=\{e\},  <p_1^{\alpha_1}p_2^{\alpha_2}> \cap <p_1p_2^{\alpha_2}p_3^{\alpha_3}>=\{e\},$ and $<p_1^{\alpha_1}p_3^{\alpha_3}> \cap <p_1p_2^{\alpha_2}p_3^{\alpha_3}>=\{e\}$. Moreover, $<p_2^{\alpha_2}p_3^{\alpha_3}> \cap <p_1p_2^{\alpha_2}p_3^{\alpha_3}> \neq \{e\}$. Hence, there exist two distinct hyperedges $e_1$ and $e_2$ such that $e_1$ containing $<p_1^{\alpha_1}p_2^{\alpha_2}>, <p_1^{\alpha_1}p_3^{\alpha_3}>$ and $<p_2^{\alpha_2}p_3^{\alpha_3}>$, and $e_2$ containing $<p_1^{\alpha_1}p_2^{\alpha_2}>, <p_1^{\alpha_1}p_3^{\alpha_3}>$ and $<p_1p_2^{\alpha_2}p_3^{\alpha_3}>$. Therefore, $<p_1^{\alpha_1}p_2^{\alpha_2}> e_1 <p_1^{\alpha_1}p_3^{\alpha_3}> e_2 <p_1^{\alpha_1}p_2^{\alpha_2}>$ and so, $gr(\tilde{\Gamma}_\mathcal{H}(\mathbb{Z}_n))=2$.\\ 
\textbf{Case 5.}  If $\omega(n) \geq 4$ and  $n=p_1^{\alpha_1}p_2^{\alpha_2}p_3^{\alpha_3}p_4^{\alpha_4}\ldots p_k^{\alpha_k}$, where  $k\geq 4$, $p_1,p_2, \ldots, p_k$ are distinct primes,  $\alpha_1, \alpha_2, \ldots, \alpha_k$ are non-negative integers,  then observe that $<p_1^{\alpha_1}p_2^{\alpha_2}p_3^{\alpha_3}p_5^{\alpha_5}\ldots p_k^{\alpha_k}> \cap $ $<p_1^{\alpha_1}p_2^{\alpha_2}p_4^{\alpha_4}\ldots p_k^{\alpha_k}>=\{e\}, <p_1^{\alpha_1}p_2^{\alpha_2}p_3^{\alpha_3}p_5^{\alpha_5}\ldots p_k^{\alpha_k}> \cap <p_2^{\alpha_2}p_3^{\alpha_3}p_4^{\alpha_4}\ldots p_k^{\alpha_k}>=\{e\},$ \\
$ <p_1^{\alpha_1}p_2^{\alpha_2}p_4^{\alpha_4}\ldots p_k^{\alpha_k}> \cap <p_2^{\alpha_2}p_3^{\alpha_3}p_4^{\alpha_4}\ldots p_k^{\alpha_k}>=\{e\}, <p_1^{\alpha_1}p_2^{\alpha_2}p_3^{\alpha_3}p_5^{\alpha_5}\ldots p_k^{\alpha_k}> \cap <p_3^{\alpha_3}p_4^{\alpha_4}\ldots p_k^{\alpha_k}>$ \\$ =\{e\}$ and $<p_1^{\alpha_1}p_2^{\alpha_2}p_3^{\alpha_3}p_5^{\alpha_5}\ldots p_k^{\alpha_k}> \cap <p_3^{\alpha_3}p_4^{\alpha_4}\ldots p_k^{\alpha_k}>=\{e\}$. Moreover, $<p_2^{\alpha_2}p_3^{\alpha_3}p_4^{\alpha_4}\ldots p_k^{\alpha_k}> \cap <p_3^{\alpha_3}p_4^{\alpha_4}\ldots p_k^{\alpha_k}> \neq \{e\}$. Hence, there exist two distinct hyperedges $e_1$ and $e_2$ such that $e_1$ containing $<p_1^{\alpha_1}p_2^{\alpha_2}p_3^{\alpha_3}p_5^{\alpha_5}\ldots p_k^{\alpha_k}>, <p_1^{\alpha_1}p_2^{\alpha_2}p_4^{\alpha_4}\ldots p_k^{\alpha_k}>$ and $<p_2^{\alpha_2}p_3^{\alpha_3}p_4^{\alpha_4}\ldots p_k^{\alpha_k}>$, and $e_2$ containing $<p_1^{\alpha_1}p_2^{\alpha_2}p_3^{\alpha_3}p_5^{\alpha_5}\ldots p_k^{\alpha_k}>, <p_1^{\alpha_1}p_2^{\alpha_2}p_4^{\alpha_4}\ldots p_k^{\alpha_k}>$ and  $<p_3^{\alpha_3}p_4^{\alpha_4}\ldots p_k^{\alpha_k}>$. Therefore, $p_1^{\alpha_1}p_2^{\alpha_2}p_3^{\alpha_3}p_5^{\alpha_5}\ldots p_k^{\alpha_k}> e_1 <p_1^{\alpha_1}p_2^{\alpha_2}p_4^{\alpha_4}\ldots p_k^{\alpha_k}> e_2 \\ <p_1^{\alpha_1}p_2^{\alpha_2}p_3^{\alpha_3}p_5^{\alpha_5}\ldots p_k^{\alpha_k}>$ and so, $gr(\tilde{\Gamma}_\mathcal{H}(\mathbb{Z}_n))=2$. 
	\end{proof}
	In the following theorem we establish that the chromatic number $\chi(\tilde{\Gamma}_\mathcal{H}(\mathbb{Z}_n))$ of $\mathbb{Z}_n$ is 2 for all $n$.
	\begin{theorem}
		The chromatic number, $\chi(\tilde{\Gamma}_\mathcal{H}(\mathbb{Z}_n))=2$ for all $n$.
	\end{theorem}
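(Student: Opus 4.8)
The plan is to show that two colors always suffice, which means proving two things: that $\chi \geq 2$ (so one color is never enough) and that $\chi \leq 2$ (so two colors always work). The lower bound is immediate: since we have assumed $\omega(n) \geq 2$, the hypergraph $\tilde{\Gamma}_\mathcal{H}(\mathbb{Z}_n)$ is nonempty and contains at least one hyperedge with at least two vertices; coloring all vertices with a single color would make that hyperedge monochromatic, violating the definition of a proper coloring. Hence $\chi(\tilde{\Gamma}_\mathcal{H}(\mathbb{Z}_n)) \geq 2$.

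**The main work is the upper bound**, i.e. exhibiting an explicit 2-coloring in which no hyperedge is monochromatic. The natural coloring to try exploits the structure from Theorem \ref{vertexset}: every vertex $\langle p_1^{r_1}\cdots p_k^{r_k}\rangle$ satisfies $r_i = \alpha_i$ for at least one index $i$. I would fix the prime $p_1$ and partition the vertex set into two classes according to whether $r_1 = \alpha_1$ or $r_1 < \alpha_1$. Concretely, color a vertex $H = \langle p_1^{r_1}\cdots p_k^{r_k}\rangle$ with color $A$ if $r_1 = \alpha_1$, and with color $B$ if $r_1 < \alpha_1$. I must then check that every hyperedge receives both colors. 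Recall that a hyperedge is a maximal set of pairwise-coprime (in the sense $H \cap K = \{e\}$) vertices, equivalently by Lemma \ref{lemma2} a maximal family of generators whose pairwise lcm equals $n$.

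**The key step** is verifying that no hyperedge is monochromatic under this coloring. Suppose a hyperedge $e$ contained only vertices with $r_1 = \alpha_1$ (all color $A$). Take any two distinct $H, K \in e$; by the hyperedge condition $H \cap K = \{e\}$, so $\operatorname{lcm}$ of their generators is $n$, forcing them to be coprime in each prime, yet both have full exponent $\alpha_1$ at $p_1$, so their lcm already carries $p_1^{\alpha_1}$ but they cannot have coprime $p_1$-parts — this is consistent only if $e$ is a singleton, but a singleton is never a maximal coprime family when $\omega(n)\geq 2$ (one can always adjoin a vertex whose $p_1$-exponent is less than $\alpha_1$, such as $\langle p_2^{\alpha_2}\cdots p_k^{\alpha_k}\rangle$). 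Conversely, if $e$ contained only color-$B$ vertices (all with $r_1 < \alpha_1$), then by Theorem \ref{vertexset} each such vertex has full exponent at some prime $p_j$ with $j \neq 1$; I would argue that $\langle p_1^{\alpha_1}\rangle$ (which has color $A$) is coprime to every color-$B$ vertex, since a $B$-vertex's generator and $p_1^{\alpha_1}$ have lcm equal to $n$, contradicting maximality of $e$ unless $\langle p_1^{\alpha_1}\rangle$ already lies in $e$.

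**The anticipated obstacle** is handling the edge cases cleanly and making the maximality argument airtight: I need to confirm that in every hyperedge both a vertex with $r_1 = \alpha_1$ and a vertex with $r_1 < \alpha_1$ genuinely appear, rather than merely that they could be adjoined. The cleanest route is probably to show directly that any maximal coprime family must mix the two color classes, by observing that $\langle p_1^{\alpha_1}\rangle$ is coprime precisely to those vertices with $r_1 < \alpha_1$ and to nothing with $r_1 = \alpha_1$ (other than via the trivial intersection failing), so a maximal family containing only one color class could always be properly extended, contradicting maximality. Once this case analysis (splitting on $\omega(n) = 2$ versus $\omega(n) \geq 3$, mirroring the earlier proofs) is complete, both bounds combine to give $\chi(\tilde{\Gamma}_\mathcal{H}(\mathbb{Z}_n)) = 2$ for all admissible $n$.
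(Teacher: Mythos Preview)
Your coloring is exactly the paper's: partition vertices by whether the $p_1$-exponent is full ($r_1=\alpha_1$, color $A$) or not ($r_1<\alpha_1$, color $B$), then argue no hyperedge is monochromatic. For the all-$A$ case your extension vertex $\langle p_2^{\alpha_2}\cdots p_k^{\alpha_k}\rangle$ works (the paper uses $\langle p_1^{\alpha_1-1}p_2^{\alpha_2}\cdots p_k^{\alpha_k}\rangle$, same idea), though your preliminary claim that an all-$A$ hyperedge must be a singleton is false for $k\geq 3$: e.g.\ $\langle p_1^{\alpha_1}p_2^{\alpha_2}\rangle$ and $\langle p_1^{\alpha_1}p_3^{\alpha_3}\rangle$ are two adjacent $A$-vertices. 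That muddle is harmless because your extension argument does not depend on it.

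The genuine gap is in the all-$B$ case. You assert that $\langle p_1^{\alpha_1}\rangle$ is adjacent to every color-$B$ vertex, but this is false once $k\geq 3$. Adjacency of $\langle p_1^{\alpha_1}\rangle$ with $H=\langle p_1^{r_1}\cdots p_k^{r_k}\rangle$ requires $\operatorname{lcm}(p_1^{\alpha_1},\,p_1^{r_1}\cdots p_k^{r_k})=n$, i.e.\ $r_i=\alpha_i$ for \emph{all} $i\geq 2$, not merely $r_1<\alpha_1$. For instance with $n=p_1^2p_2^2p_3^2$, the $B$-vertex $\langle p_1 p_2^2 p_3\rangle$ is not adjacent to $\langle p_1^2\rangle$. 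So your proposed extension fails. The clean fix, which is what the paper does, is to observe that \emph{no two $B$-vertices are adjacent}: if both have $p_1$-exponent strictly below $\alpha_1$, then the $\operatorname{lcm}$ of their generators has $p_1$-exponent strictly below $\alpha_1$ and hence cannot equal $n$. Thus an all-$B$ hyperedge would be a singleton, which contradicts maximality (every vertex is adjacent to some other vertex by definition of the vertex set). Ironically, your singleton argument belongs here, not in the all-$A$ case.
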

	\begin{proof}
		Let $n=p_1^{\alpha_1}p_2^{\alpha_2} \ldots p_k^{\alpha_k}$, where $p_1, p_2, \ldots, p_k$ are distinct primes and $\alpha_1, \alpha_2, \ldots, \alpha_k$ are non-negative integers. Note that  $V(\tilde{\Gamma}_\mathcal{H}(\mathbb{Z}_n))= A \cup B$, where\\ 
		$A \{<p_1^{\alpha_1}p_2^{\beta_2}p_3^{\beta_3}\ldots p_k^{\beta_k}> \, \, \mid  0 \leq \beta_i \leq \alpha_i  \, \text{for $i=2,3,\ldots,k$}\}$ and \\
		$B=\{<p_1^{\beta_1}p_2^{\beta_2}p_3^{\beta_3}\ldots p_k^{\beta_k}> \, \, \mid  0 \leq \beta_1 \leq \alpha_1 -1 \, \text{and} \, 0 \leq \beta_i \leq \alpha_i \, \text{for $i=2,3,\ldots,k$}\}.$\\
	If $e_1$ is a hyperedge of $\tilde{\Gamma}_\mathcal{H}(\mathbb{Z}_n)$ containing the vertices from $A$ only, then, by Lemma \ref{lemma2},\\ $<p_1^{\alpha_1 -1}p_2^{\alpha_2}p_3^{\alpha_3} \ldots p_k^{\alpha_k}> \cap \,  H =\{e\}$, for all $H \in e_1$.  Thus, $<p_1^{\alpha_1 -1}p_2^{\alpha_2}p_3^{\alpha_3} \ldots p_k^{\alpha_k}>$ must belong to $e_1$, otherwise contradicting the maximality condition of $e_1$. Therefore, $e_1$ cannot contain vertices from $A$ only. Also, note that no two vertices of the set $B$ are adjacent. Hence, any hyperedge of $\tilde{\Gamma}_\mathcal{H}(\mathbb{Z}_n)$ cannot contain only vertices from the set $B$. Thus, each hyperedge has at least one vertex from  A and B both. Assign the color $c_1$ to the vertices in $A$ and the color $c_2$ to the vertices in $B$. This is a proper coloring of $\tilde{\Gamma}_\mathcal{H}(D_n)$ and consequently,  
	$\chi(\tilde{\Gamma}_\mathcal{H}(D_n)) = 2$.
	\end{proof}
	
	\section{Planarity and Non-Planarity of the hypergraph}
		
		To discuss the planarity  of  $\tilde{\Gamma}_\mathcal{H}(\mathbb{Z}_n)$, we essentially need the following results.
			\begin{theorem}\cite{walsh1975hypermaps}
			A graph  is planar if and only if it contains no subdivision of $K_5$ or $K_{3,3}$.
		\end{theorem}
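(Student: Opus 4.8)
The plan is to prove both directions separately, treating the forward implication as routine and concentrating the real work on the converse. For the easy direction, I would first record that $K_5$ and $K_{3,3}$ are themselves non-planar via Euler's formula $v - e + f = 2$: a hypothetical planar embedding of $K_5$ would give $f = 7$ faces, but $2e \geq 3f$ forces $20 \geq 21$, a contradiction; for $K_{3,3}$, bipartiteness yields girth at least $4$, so $2e \geq 4f$ forces $18 \geq 20$, again impossible. Since planarity is inherited by subgraphs and is unaffected by suppressing or inserting degree-two vertices, any subdivision of $K_5$ or $K_{3,3}$ is non-planar. Hence a planar graph can contain no such subdivision, which is the ``only if'' direction.

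For the converse I would argue by contradiction, taking a counterexample $G$ with the fewest edges: a non-planar graph containing no subdivision of $K_5$ or $K_{3,3}$. First I would show $G$ is $2$-connected, since a cut vertex splits $G$ into blocks, and the (necessarily non-planar) block is a smaller Kuratowski-free counterexample. Next I would promote this to $3$-connectivity: given a $2$-separator $\{u,v\}$, I split $G$ along it and add the edge $uv$ to each side, forming $G_1$ and $G_2$, each with fewer edges. A Kuratowski subgraph inside some $G_i$ using the new edge $uv$ would, after rerouting $uv$ through a $u$--$v$ path in the opposite side (which exists by connectivity), produce a Kuratowski subgraph in $G$; so each $G_i$ is Kuratowski-free, hence planar by minimality. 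Gluing planar embeddings of $G_1$ and $G_2$ along the shared edge $uv$ placed on the outer boundary re-embeds $G$ in the plane, contradicting non-planarity. Thus $G$ is $3$-connected.

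The final and most substantial step proceeds by induction via edge contraction. I would invoke Tutte's lemma that every $3$-connected graph on at least five vertices has an edge $e = xy$ whose contraction $H = G/e$ remains $3$-connected. Because $H$ has fewer edges and every subdivision of $K_5$ or $K_{3,3}$ in $H$ lifts back to one in $G$ (re-expanding the contracted vertex $z$ into $x,y$ and rerouting through $e$), the graph $H$ is Kuratowski-free, hence planar by induction. By Whitney's theorem a $3$-connected planar graph has an essentially unique embedding in which every face is bounded by a cycle, so I would examine the cyclic order of the neighbours of $z$ along the boundary of the face structure, partitioning them into those inherited from $x$ and those from $y$. If these two sets occupy disjoint arcs, I can re-expand $z$ into the edge $xy$ and obtain a planar embedding of $G$; if they interleave, that interleaving pattern directly exhibits a subdivision of $K_{3,3}$, while the degenerate total-overlap case yields $K_5$. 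Either outcome contradicts the choice of $G$, completing the proof.

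The hard part will be precisely this last case analysis around the contracted vertex: verifying that a ``bad'' cyclic interleaving of the neighbourhoods of $x$ and $y$ genuinely forces an explicit $K_5$ or $K_{3,3}$ subdivision, while a ``good'' separation always permits re-embedding. I expect the two structural inputs I would establish or cite at the outset, namely Tutte's contractible-edge lemma and Whitney's uniqueness of embeddings for $3$-connected planar graphs, to carry most of the technical weight, with the remaining difficulty being the careful bookkeeping of the neighbour arcs.
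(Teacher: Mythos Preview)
The paper does not prove this statement at all: it is Kuratowski's classical theorem, stated with a citation and used as a black box in the planarity analysis of $\tilde{\Gamma}_\mathcal{H}(\mathbb{Z}_n)$. There is therefore no ``paper's own proof'' to compare against.

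That said, your outline is the standard Thomassen-style proof and is essentially correct. A couple of points to tighten if you actually carry it out: in the $2$-separator step, when you form $G_i + uv$, be explicit that if a Kuratowski subdivision in $G_i + uv$ uses the added edge $uv$, rerouting $uv$ through a $u$--$v$ path in $G_{3-i}$ yields a subdivision of the \emph{same} graph in $G$ (the path may be long but only contributes degree-$2$ vertices). In the contraction step, the claim that a Kuratowski subdivision in $H = G/e$ lifts to one in $G$ is not completely automatic: if the contracted vertex $z$ is a branch vertex of a $K_5$-subdivision in $H$, splitting $z$ back into $x,y$ may distribute the four incident subdivision paths as $3+1$ or $2+2$, and in the $2+2$ case you obtain a $K_{3,3}$-subdivision in $G$ rather than a $K_5$-subdivision. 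This is fine for the theorem but worth stating. Finally, in the last paragraph, the ``bad interleaving'' case actually splits into two subcases (three common neighbours of $x$ and $y$ on the face, giving $K_5$; or an alternating pattern of $x$-neighbours and $y$-neighbours, giving $K_{3,3}$), and you should enumerate these explicitly.
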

		\begin{theorem}\cite{walsh1975hypermaps} 
			A hypergraph is planar if and only if its incidence graph is planar.
		\end{theorem}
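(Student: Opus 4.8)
The plan is to prove the equivalence by passing back and forth between a geometric (topological) representation of the hypergraph and a plane drawing of its incidence graph $\mathcal{I}(\mathcal{H})$, using the standard ``star-thickening'' correspondence. First I would fix the geometric notion of a planar hypergraph: a drawing in which each vertex $v$ is a distinct point of the plane, each hyperedge $e$ is represented by a closed topological disk $R_e$ with $R_e \cap V(\mathcal{H}) = e$, and the regions $\{R_e\}$ have pairwise disjoint interiors (meeting, if at all, only at shared boundary vertices). The two implications then proceed independently.

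First I would show that a planar hypergraph has a planar incidence graph. Starting from a geometric representation as above, I place an auxiliary point $x_e$ in the interior of each disk $R_e$, and for every vertex $v \in e$ I draw a simple arc from $x_e$ to $v$ inside $R_e$. Since $R_e$ is a disk and its member points lie on or inside it, these arcs can be chosen as pairwise non-crossing spokes, and because distinct regions $R_e$ have disjoint interiors, arcs belonging to different hyperedges do not meet. Identifying each $x_e$ with the hyperedge-vertex $e$ and each $v$ with the vertex-vertex $v$, the resulting arc system is precisely a plane drawing of $\mathcal{I}(\mathcal{H})$, so $\mathcal{I}(\mathcal{H})$ is planar.

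For the converse I would start from a fixed plane embedding of $\mathcal{I}(\mathcal{H})$. Each hyperedge-vertex $e$ together with its incident edges forms a star $K_{1,|e|}$ centered at the point representing $e$. Taking a small closed disk neighborhood of that point and uniting it with thin tubular neighborhoods of the incident arcs (stretched so as to reach the member vertices) yields a region $R_e$ that is a topological disk and contains exactly the vertices $v$ with $v \in e$. Planarity of the embedding guarantees that, after shrinking the neighborhoods, these regions can be taken with pairwise disjoint interiors, producing a geometric representation of $\mathcal{H}$; hence $\mathcal{H}$ is planar.

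The hard part will be the topological bookkeeping rather than the combinatorics: making the geometric definition of a planar hypergraph precise, and verifying that thickening a star inside a plane embedding genuinely produces a disk-shaped region that captures no stray vertices and that distinct regions can be separated. This is exactly the content established in \cite{walsh1975hypermaps}; accordingly, in the framework of this paper---where planarity of a hypergraph is taken to mean planarity of its incidence graph---the statement effectively functions as the working definition, and I would cite Walsh's theorem for the nontrivial equivalence with the intrinsic drawing.
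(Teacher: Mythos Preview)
The paper does not supply its own proof of this theorem; it is stated as a cited result from \cite{walsh1975hypermaps} and used as a black box. Your sketch is the standard star-thickening argument that underlies Walsh's result, and it is correct in outline, so there is nothing in the paper to compare it against beyond the citation itself. Your closing observation is also accurate: throughout Section~4 the authors treat ``$\tilde{\Gamma}_\mathcal{H}(\mathbb{Z}_n)$ is planar/toroidal/projective'' as synonymous with the corresponding property of $\mathcal{I}(\tilde{\Gamma}_\mathcal{H}(\mathbb{Z}_n))$, so for the purposes of this paper the theorem functions as a definition rather than as something to be proved.
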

	\begin{theorem}\label{planarity}
		Let $n=p_1^{\alpha_1}p_2^{\alpha_2}p_3^{\alpha_3} \ldots p_k^{\alpha_k}$, where $p_1, p_2,p_3, \ldots, p_k$ are distinct primes and $\alpha_1, \alpha_2, \cdots, \alpha_k$ are non-negative integers. Then,  $\tilde{\Gamma}_\mathcal{H}(\mathbb{Z}_n)$ is planar if and only if $n$ is equal to  $p_1^{\alpha_1}p_2$ or $p_1^{\alpha_1}p_2^2$ or  $p_1p_2p_3$ or $p_1^2p_2p_3$.
	\end{theorem}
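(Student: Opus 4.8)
The plan is to work throughout with the incidence graph $\mathcal{I}(\tilde{\Gamma}_\mathcal{H}(\mathbb{Z}_n))$, since by the cited equivalence $\tilde{\Gamma}_\mathcal{H}(\mathbb{Z}_n)$ is planar if and only if $\mathcal{I}(\tilde{\Gamma}_\mathcal{H}(\mathbb{Z}_n))$ is planar, and the latter is decided by Kuratowski's theorem (no subdivision of $K_5$ or $K_{3,3}$). The first step is to make the hypergraph explicit: by Theorem \ref{vertexset} the vertices are the subgroups $<p_1^{r_1}\cdots p_k^{r_k}>$ with $r_i=\alpha_i$ for at least one $i$; by the remark identifying $\tilde{\Gamma}_\mathcal{H}$ with the clique hypergraph of $\mathscr{I}^c(\mathbb{Z}_n)^\star$ together with Lemma \ref{lemma2}, two vertices are adjacent if and only if their generators have $\mathrm{lcm}=n$, and the hyperedges are the maximal cliques under this relation. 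I would then organise everything by $\omega(n)$ and the sizes of the exponents.

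For the sufficiency (``if'') direction I would identify the hypergraph in each of the four cases and exhibit a planar drawing of its incidence graph. For $n=p_1^{\alpha_1}p_2$ the hypergraph is, by Theorem \ref{starhyp}, the star $K_{1,\alpha_1}$; and for $n=p_1^{\alpha_1}p_2^2$ a short computation with Lemma \ref{lemma2} shows it is the complete bipartite graph $K_{2,\alpha_1}$ joining the two subgroups with $r_1=\alpha_1$ to the $\alpha_1$ subgroups with $r_2=2$. Both are $2$-uniform, so their incidence graphs are subdivisions of planar graphs and hence planar. The cases $n=p_1p_2p_3$ and $n=p_1^2p_2p_3$ have only $6$ and $9$ vertices with explicitly listable hyperedges, so here I would simply display a planar embedding of each incidence graph.

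For the necessity (``only if'') direction I would produce a Kuratowski subdivision in every other case, in three regimes. If $\omega(n)=2$ with $\alpha_1,\alpha_2\ge 3$, the same Lemma \ref{lemma2} computation shows the hypergraph is exactly $K_{\alpha_1,\alpha_2}\supseteq K_{3,3}$, whose incidence graph is a subdivision of $K_{\alpha_1,\alpha_2}$ and so non-planar. If $\omega(n)\ge 4$, take $V_i=<\prod_{j\ne i}p_j^{\alpha_j}>$ (full power on every prime but $p_i$): these are pairwise adjacent, $\{V_1,\dots,V_k\}$ is a maximal clique $e_0$, and for each pair $\{i,j\}$ the clique $\{V_i,V_j,<p_i^{\alpha_i}p_j^{\alpha_j}>\}$ sits in a maximal clique $\pi_{ij}$ that, among the $V$'s, contains only $V_i,V_j$; choosing four indices gives a $K_5$-subdivision on branch nodes $V_1,V_2,V_3,V_4,e_0$, where $V_iV_j$ is the internally disjoint path $V_i-\pi_{ij}-V_j$ and each $e_0V_i$ is a direct incidence. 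If $\omega(n)=3$, the cases left are those with some exponent $\ge 3$ and those of shape $(2,2,1)$ or $(2,2,2)$; when, say, $\alpha_1\ge 3$, the pairwise non-adjacent subgroups $W_a=<p_1^{a}p_2^{\alpha_2}p_3^{\alpha_3}>$ ($a=0,1,2$) lie in the distinct hyperedges $f_a=\{W_a,T_2,T_3\}$ and $g_a=\{W_a,T_1\}$ with $T_1=<p_1^{\alpha_1}>$, $T_2=<p_1^{\alpha_1}p_2^{\alpha_2}>$, $T_3=<p_1^{\alpha_1}p_3^{\alpha_3}>$, and these give a $K_{3,3}$ on $\{T_1,T_2,T_3\}$ and $\{f_0,f_1,f_2\}$ in which $T_2,T_3$ meet each $f_a$ directly while $T_1$ reaches $f_a$ through $g_a-W_a$.

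The delicate case, and the one I expect to be the main obstacle, is $n=p_1^2p_2^2p_3$ (and $p_1^2p_2^2p_3^2$): here the ``three independent common neighbours'' device fails, because no prime has exponent $\ge 3$, so any two adjacent subgroups have at most two pairwise non-adjacent common neighbours and no three hyperedges share two vertices. I would instead anchor on the five subgroups that are full on two of the three primes, $P=<p_1^2p_2^2>$, $Q_0=<p_1^2p_3>$, $Q_1=<p_1^2p_2p_3>$, $R_0=<p_2^2p_3>$, $R_1=<p_1p_2^2p_3>$, the four triangular hyperedges $h_{ij}=\{P,Q_i,R_j\}$, and the auxiliary edge-hyperedges joining $Q_0,Q_1$ through $<p_2^2>$ and $R_0,R_1$ through $<p_1^2>$. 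A $K_{3,3}$-subdivision then has branch nodes $\{h_{00},h_{01},h_{10}\}$ and $\{P,Q_0,R_0\}$, with seven of the nine connections direct incidences and the two missing ones ($h_{01}$ to $R_0$ and $h_{10}$ to $Q_0$) routed, internally disjointly, through those auxiliary hyperedges. Finally I would check that each obstruction persists when exponents grow, so that non-planarity propagates from the minimal bad cases $p_1^3p_2^3$, $p_1^3p_2p_3$, $p_1^2p_2^2p_3$ and $p_1p_2p_3p_4$ to all remaining $n$, which completes the characterisation.
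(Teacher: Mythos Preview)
Your proposal is correct and follows the same overall scheme as the paper: reduce to Kuratowski on the incidence graph, treat the four planar shapes by explicit embeddings, and for every other $n$ exhibit a $K_{3,3}$- or $K_5$-subdivision, organised by $\omega(n)$ and the exponent pattern. Within that common framework your concrete obstructions differ from the paper's in a few places and are somewhat sharper. For $\omega(n)=2$ you identify $\tilde{\Gamma}_\mathcal{H}(\mathbb{Z}_{p_1^{\alpha_1}p_2^{\alpha_2}})$ explicitly as the $2$-uniform hypergraph $K_{\alpha_1,\alpha_2}$, so non-planarity for $\alpha_1,\alpha_2\ge 3$ is immediate; the paper instead points to a figure of $\mathcal{I}(\tilde{\Gamma}_\mathcal{H}(\mathbb{Z}_{p_1^3p_2^3}))$. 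For $\omega(n)\ge 4$ you produce a single $K_5$-subdivision on $V_1,\dots,V_4,e_0$ that works uniformly for all exponents, whereas the paper splits into two subcases (all $\alpha_i\le 2$ versus some $\alpha_i\ge 3$) and uses $K_{3,3}$ in each. Your $\omega(n)=3$ constructions (the $T_i$/$W_a$ configuration when some $\alpha_i\ge 3$, and the $P,Q_i,R_j$ configuration for $p_1^2p_2^2p_3$) are close cousins of the two figures the paper supplies for that case. In short, same route, but your explicit algebraic descriptions of the obstructions avoid the paper's reliance on pictures and eliminate one subcase split.
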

	\begin{proof}
		Consider the following cases:\\
		\textbf{Case 1.} If $n=p_1^{\alpha_1}p_2$, then by Theorem \ref{starhyp},  $\tilde{\Gamma}_\mathcal{H}(\mathbb{Z}_n)$ is a 2-uniform hypergraph, i.e., star graph. So, $\tilde{\Gamma}_\mathcal{H}(\mathbb{Z}_n)$ is planar. \\
		\textbf{Case 2.} If $n=p_1^{\alpha_1}p_2^2$, where $\alpha_1 \geq 2$, then Figure \ref{embed} (a) depicts an embedding of  	$\tilde{\Gamma}_\mathcal{H}(\mathbb{Z}_n)$ on the plane, where the unlabelled vertices are hyperedges and hence, $\tilde{\Gamma}_\mathcal{H}(\mathbb{Z}_n)$ is planar.\\
	\noindent \textbf{Case 3.} If $n=p_1 p_2 p_3$, then Figure \ref{embed} (b) depicts an embedding of $\tilde{\Gamma}_\mathcal{H}(\mathbb{Z}_n)$ on the plane and hence, $\tilde{\Gamma}_\mathcal{H}(\mathbb{Z}_n)$ is planar.\\
\noindent\textbf{Case 4.} If $n=p_1^2 p_2 p_3$, then Figure \ref{embed} (c) depicts an embedding of $\tilde{\Gamma}_\mathcal{H}(\mathbb{Z}_n)$ on the plane and hence, $\tilde{\Gamma}_\mathcal{H}(\mathbb{Z}_n)$ is planar.\\
\begin{figure}[h]
	\centering\subfloat[Planar embedding of 	$\mathcal{I}(\tilde{\Gamma}_\mathcal{H}(\mathbb{Z}_{p_1^{\alpha_1}p_2^2})$]{	\begin{tikzpicture}[scale=.7, every node/.style={circle, fill=black, minimum size=4pt, inner sep=0pt}]
			
			% Nodes in set U
			\node [label=right:{\tiny{$<p_2^2>$}}](u1) at (-.5,-.8) {};
			\node [label=right:{\tiny{$<p_1p_2^2$>}}](u2) at (-.3,0) {};
			\node [label=right:{\tiny{$<p_1^2p_2^2$>}}] (u3) at (0,1) {};
			\node [scale=.4,label=right:{}] (u4) at (0,1.5) {};
			\node [scale=.4,label=right:{}] (u5) at (0,2) {};
			\node[label=right:{\tiny{$<p_1^{\alpha_1}p_2^2>$}}] (u6) at (0,2.5) {};
			
			\node [label=below:{\tiny{$<p_1^{\alpha_1}p_2>$}}] (a1) at (-1,-1.5) {};
			\node [label=left:{\tiny{$<p_1^{\alpha_1}>$}}] (a2) at (-2,-1.5) {};

			\node [label=right:{}] (v1) at (-.5,-1.2) {};
			\node [label=right:{}] (v2) at (-1.5,-1.2) {};
			\node [label=right:{}] (v3) at (1.2,-1) {};
			\node [label=right:{}] (v4) at (-1.5,-.5) {};
			\node [label=right:{}] (v5) at (-1.5,0) {};
			\node [label=right:{}] (v6) at (1.5,0) {};
			\node [label=right:{}] (v7) at (2,1) {};
			\node [label=right:{}] (v8) at (-1.5,2) {};
			% Edges
			\draw (u1) --(v1);
			\draw (a1) --(v1);
			\draw (u1) --(v2);
			\draw (a2) --(v2);
			\draw (u1) --(v1);
			\draw (a1) --(v1);
			\draw (u2) --(v4);
			\draw (a2) --(v4);
			\draw (u2) --(v3);
			\draw (a1) ..  controls (0,-1.5) .. (v3);
			\draw (u3) --(v5);
			\draw (a2) --(v5);
			\draw (u3) --(v6);
			\draw (a1) ..  controls (1.2,-2) .. (v6);
			\draw (a1) ..  controls (2,-2.4) .. (v7);
			\draw (u6) --(v7);
			\draw (a2) ..  controls (-2,0) .. (v8);
			\draw (u6) --(v8);
	\end{tikzpicture}}
	\label{case3}
	\hspace{.1 cm}
	\subfloat[ Planar embedding of 	$\mathcal{I}(\tilde{\Gamma}_\mathcal{H}(\mathbb{Z}_{p_1p_2p_3})$)]{\begin{tikzpicture}[scale=.6, every node/.style={circle, fill=black, minimum size=4pt, inner sep=0pt}]
			
			% Nodes in set U
			\node[label=above:\tiny{$<p_1p_2>$}] (u1) at (-1,4) {};
			\node[label= right:\tiny{$<p_1p_3>$}] (u2) at (2,3) {};
			\node[label=below:\tiny{$<p_2p_3>$}] (u3) at (4,2) {};
			\node[label=left:\tiny{$<p_3>$}] (u4) at (-1,3) {};
			\node[label=left:\tiny{$<p_2>$}] (u5) at (-1,2.5) {};
			\node[label=left:\tiny{$<p_1>$}] (u6) at (-1,1.5) {};
			
			\node[label=above:\tiny{$e_1$}] (v1) at (1,4) {};
			\node[label=above:\tiny{$e_2$}] (v2) at (-3,4) {};
			\node[label=above:\tiny{$e_3$}] (v3) at (1,3) {};
			\node[label=below:\tiny{$e_4$}] (v4) at (1,2.5) {};
			% Edges
			\draw (u1) -- (v1);
			\draw (u2) -- (v1);
			\draw (u3) -- (v1);
			\draw (u1) -- (v2);
			\draw (u4) -- (v2);
			\draw (u2) -- (v3);
			\draw (u5) -- (v3);
			\draw (u3) -- (v4);
			\draw (u6) -- (v4);
			\draw (u2) -- (v4);

			\label{case4}
	\end{tikzpicture}}
	\label{case2embed}
	\hspace{.1 cm}
	\subfloat[Planar embedding of 	$\mathcal{I}(\tilde{\Gamma}_\mathcal{H}(\mathbb{Z}_{p_1^2p_2p_3})$]{
		\begin{tikzpicture}[scale=.9, every node/.style={circle, fill=black, minimum size=4pt, inner sep=0pt}]
			
			% Nodes in set U
			\node[label=above right:\tiny{$<p_1^2p_2>$}] (u1) at (-1,4) {};
			\node[label=left:\tiny{$<p_1^2p_3>$}] (u2) at (-1,3.5) {};
			\node[label=left:\tiny{$<p_2p_3>$}] (u3) at (1,5) {};
			\node[label=left:\tiny{$<p_1p_2p_3>$}] (u4) at (-1.5,1) {};
			\node[label=above:\tiny{$<p_3>$}] (u5) at (-1,6) {};
			
			\node[label=above left:\tiny{$<p_1p_3>$}] (u6) at (-2,5) {};
			\node[label=below:\tiny{$<p_2>$}] (u7) at (-1,2) {};
			\node[label=below:\tiny{$<p_1p_2>$}] (u8) at (0,2) {};
			\node[label=above left:\tiny{$<p_1^2>$}] (u9) at (0,3) {};
			
			% Nodes in set V
			\node[label=below:\tiny{$e_1$}] (v1) at (1,4) {};
			\node[label=left:\tiny{$e_2$}] (v2) at (-3,5) {};
			\node[label=above:\tiny{$e_3$}] (v3) at (1,3) {};
			\node[label=right:\tiny{$e_4$}] (v4) at (1,1) {};
			\node[label=right:\tiny{$e_5$}] (v5) at (-1,5) {};
			\node[label=below:\tiny{$e_6$}] (v6) at (-1.5,5) {};
			\node[label=left:\tiny{$e_7$}] (v7) at (-1.8,3) {};
			\node[label=right:\tiny{$e_8$}] (v8) at (-.8,3) {};

			% Edges
			
			\draw (u1) -- (v1);
			\draw (u2) -- (v1);
			\draw (u3) -- (v1);
			\draw (u1) -- (v2);
			\draw (u2) -- (v2);
			\draw (u4) -- (v2);
			\draw (u9) -- (v3);
			\draw (u3) to[out=0, in=10](v3);
			\draw (u4) -- (v4);
			\draw (u9) -- (v4);
			\draw (u1) -- (v5);
			\draw (u5) -- (v5);
			\draw (u1) -- (v6);
			\draw (u6) -- (v6);
			\draw (u2) -- (v7);
			\draw (u7) -- (v7);
			\draw (u2) -- (v8);
			\draw (u8) -- (v8);
			%\draw (u4) .. controls (-6,6)  .. (v2);

	\end{tikzpicture}}

	\caption{}
	\label{embed}
	
\end{figure}\\
 Conversely, suppose that $n \neq p_1^{\alpha_1}p_2, n \neq p_1^{\alpha_1}p_2^2, n\neq p_1p_2p_3$ and $ n \neq p_1^2p_2p_3$. Then, consider the following cases:\\
 \textbf{Case 1.} If $\omega(n)=2$ and $n=p_1^{\alpha_1}p_2^{\alpha_2}$, where both $\alpha_1$ and $\alpha_2$ are integers greater than 2, then note that 
		$\tilde{\Gamma}_\mathcal{H}(\mathbb{Z}_n)$ contains a subgraph isomorphic to $\mathcal{I}(\tilde{\Gamma}_\mathcal{H}(\mathbb{Z}_{p_1^3p_2^3}))$ and Figure \ref{k352}(a) depicts that  $\mathcal{I}(\tilde{\Gamma}_\mathcal{H}(\mathbb{Z}_{p_1^3p_2^3}))$ is non-planar. Hence,  in this case,  $\tilde{\Gamma}_\mathcal{H}(\mathbb{Z}_n)$ is non-planar.\\
	\noindent\textbf{Case 2.} If $\omega(n)=3$ with $n \neq p_1 p_2 p_3$ and  $n \neq p_1^2 p_2 p_3$, then $\mathcal{I}(\mathbb{Z}_n)$ contains a subgraph isomorphic to  either Figure \ref{k352}(b) or Figure \ref{k352}(c) where both are subdivisions of $K_{3,3}$. Hence, $\tilde{\Gamma}_\mathcal{H}(\mathbb{Z}_n)$ is non-planar.
			\begin{figure}[h!]
				\centering\subfloat[$\mathcal{I}(\tilde{\Gamma}_\mathcal{H}(\mathbb{Z}_{p_1^3p_2^3}))$ on a plane]{\begin{tikzpicture}[scale=.82, every node/.style={circle, fill=black, minimum size=3pt, inner sep=0pt}]
						
						% Nodes in set U
						\node [label=right:{\tiny{$<p_2^3>$}}](u1) at (-.3,-1) {};
						\node [label=right:{\tiny{$<p_1p_2^3$>}}](u2) at (0,0) {};
						\node [label=right:{\tiny{$<p_1^2p_2^3$>}}] (u3) at (0,1) {};
						\node[label=left:{\tiny{$<p_1^3p_2^2>$}}] (a3) at (-3,-1.5) {};
						
						\node [label=below:{\tiny{$<p_1^3p_2>$}}] (a1) at (-2,-1.5) {};
						\node [label=below:{\tiny{$<p_1^3>$}}] (a2) at (-1,-1.5) {};

						\node [label=right:{}] (v1) at (-.4,-1.2) {};
						\node [label=right:{}] (v2) at (1.5,-1) {};
						\node [label=right:{}] (v3) at (2.2,-.8) {};
						\node [label=right:{}] (v4) at (-1,-1) {};
						\node [label=right:{}] (v5) at (-1,-.5) {};
						\node [label=right:{}] (v6) at (-1,0.3) {};
						\node [label=right:{}] (v7) at (-2,-1.6) {};
						\node [label=right:{}] (v8) at (-2.3,0) {};
						\node [label=right:{}] (v9) at (-2.5,0.5) {};
						% Edges
						
						%\draw (a1) ..  controls (0,-1.5) .. (v3);
						\draw (u1) --(v1);
						\draw (a2) --(v1);
						\draw (u2) --(v2);
						\draw (a2) .. controls (0,-1.5) .. (v2);
						\draw (a2) .. controls (0,-2) .. (v3);
						\draw (u3) .. controls (2,0) .. (v3);
						\draw (u1) --(v4);
						\draw (a1) --(v4);
						\draw (u2) --(v5);
						\draw (a1) --(v5);
						\draw (u3) --(v6);
						\draw (a1) --(v6);
						\draw (u1) --(v7);
						\draw (a3) --(v7);
						\draw (u2) --(v8);
						\draw (a3) --(v8);
						\draw (u3) --(v9);
						\draw (a3) --(v9);
				\end{tikzpicture}}
					\label{G21}
				\hspace{.1cm}
			\centering\subfloat[]{\begin{tikzpicture}[scale=.82, every node/.style={circle, fill=black, minimum size=3pt, inner sep=0pt}]
					
					% Nodes in set U
					\node[label=left:\footnotesize{$<p_1^{\alpha_1}p_2^{\alpha_2}>$}] (u1) at (-1,4) {};
					\node[label=left:\footnotesize{$<p_2^{\alpha_2}p_3^{\alpha_3}>$}] (u2) at (-1,3.5) {};
					\node[label=left:\footnotesize{$<p_1^{\alpha_1}p_3^{\alpha_3}>$}] (u3) at (-1,3) {};
					\node[label=left:\footnotesize{$<p_1^{\alpha_1}p_2p_3^{\alpha_3}>$}] (u4) at (-1,2.5) {};
					\node[label=left:\footnotesize{$<p_1p_2^{\alpha_2}p_3^{\alpha_3}>$}] (u5) at (-1,2) {};
					\node[label=left:\footnotesize{$<p_1p_2^{\alpha_2}>$}] (u6) at (-1,1.5) {};
					\node[label=left:\footnotesize{$<p_1^{\alpha_1}>$}] (u7) at (-1,1) {};
					\node[label=left:\footnotesize{$<p_2^{\alpha_2}p_3^{\alpha_3}>$}] (u8) at (-1,0.5) {};
					% Nodes in set V
					\node[label=right:$e_1$] (v1) at (1,4) {};
					\node[label=right:$e_2$] (v2) at (1,3.5) {};
					\node[label=right:$e_3$] (v3) at (1,3) {};
					\node[label=right:$e_4$] (v4) at (1,2.5) {};
					\node[label=right:$e_5$] (v5) at (1,2) {};
					\node[label=right:$e_6$] (v6) at (1,1.5) {};
					\node[label=right:$e_7$] (v7) at (1,1) {};
					
					% Edges
					\draw (u1) -- (v1);
					\draw (u2) -- (v1);
					\draw (u4) -- (v1);
					\draw (u1) -- (v2);
					\draw (u3) -- (v2);
					\draw (u5) -- (v2);
					\draw (u1) -- (v3);
					\draw (u2) -- (v3);
					\draw (u3) -- (v3);
					\draw (u4) -- (v4);
					\draw (u6) -- (v4);
					\draw (u3) -- (v5);
					\draw (u6) -- (v5);
					\draw (u5) -- (v6);
					\draw (u7) -- (v6);
					\draw (u7) -- (v7);
					\draw (u8) -- (v7);

			\end{tikzpicture}}
			\label{G21}
			\hspace{.1cm}
			\subfloat[]{
				\begin{tikzpicture}[scale=.82, every node/.style={circle, fill=black, minimum size=3pt, inner sep=0pt}]
					
					% Nodes in set U
					\node[label=left:\footnotesize{$<p_1^{\alpha_1}p_2^{\alpha_2}>$}] (u1) at (-1,4) {};
					\node[label=left:\footnotesize{$<p_1^{\alpha_1}p_3^{\alpha_3}>$}] (u2) at (-1,3.5) {};
					\node[label=left:\footnotesize{$<p_1^2p_2^{\alpha_2}p_3^{\alpha_3}>$}] (u3) at (-1,3) {};
					\node[label=left:\footnotesize{$<p_1p_2^{\alpha_2}p_3^{\alpha_3}>$}] (u4) at (-1,2.5) {};
					\node[label=left:\footnotesize{$<p_2^{\alpha_2}p_3^{\alpha_3}>$}] (u5) at (-1,2) {};
					\node[label=left:\footnotesize{$<p_1^{\alpha_1}>$}] (u6) at (-1,1.5) {};

					% Nodes in set V
					\node[label=right:$e_1$] (v1) at (1,4) {};
					\node[label=right:$e_2$] (v2) at (1,3.5) {};
					\node[label=right:$e_3$] (v3) at (1,3) {};
					\node[label=right:$e_4$] (v4) at (1,2.5) {};
					\node[label=right:$e_5$] (v5) at (1,2) {};
					\node[label=right:$e_6$] (v6) at (1,1.5) {};

					% Edges
					\draw (u1) -- (v1);
					\draw (u2) -- (v1);
					\draw (u3) -- (v1);
					\draw (u1) -- (v2);
					\draw (u2) -- (v2);
					\draw (u4) -- (v2);
					\draw (u1) -- (v3);
					\draw (u2) -- (v3);
					\draw (u5) -- (v3);
					\draw (u3) -- (v4);
					\draw (u6) -- (v4);
					\draw (u5) -- (v5);
					\draw (u6) -- (v5);
					\draw (u6) -- (v6);
					\draw (u4) -- (v6);

			\end{tikzpicture}}
			
			\caption{}
			\label{k352}
			
		\end{figure}\\
		\noindent\textbf{Case 3.} If $\omega(n) \geq 4$, then consider the following cases:\\
		\textbf{Subcase 3.1.} Suppose that  $n=p_1^{\alpha_1}p_2^{\alpha_2}p_3^{\alpha_3}\ldots p_k^{\alpha_k}$, where  $k\geq 4$, $p_1,p_2, \ldots, p_k$ are distinct primes,  $\alpha_1, \alpha_2, \ldots, \alpha_k$ are non-negative integers and $0 \leq \alpha_i \leq 2$, then it contains a subdivision of $K_{3,3}$ as depicted in Figure \ref{K33}. Hence, $\tilde{\Gamma}_\mathcal{H}(\mathbb{Z}_n)$ is non-planar.
\begin{figure}[h]
	\centering
	\begin{tikzpicture}[scale=1, every node/.style={circle, fill=black, minimum size=3pt, inner sep=0pt}]
		
		% Nodes in set U
		\node[label=left:$<p_1^{\alpha_1}p_2^{\alpha_2}p_3^{\alpha_3}p_5^{\alpha_5}\ldots p_k^{\alpha_k}>$] (u1) at (-1,4) {};
		\node[label=left:$<p_1^{\alpha_1}p_3^{\alpha_3}p_4^{\alpha_4}\ldots p_k^{\alpha_k}>$] (u3) at (-1,3) {};
		\node[label=left:$<p_1^{\alpha_1}p_2^{\alpha_2}p_4^{\alpha_4}\ldots p_k^{\alpha_k}>$] (u2) at (-1,3.5) {};
		\node[label=left:$<p_1^{\alpha_1}p_3^{\alpha_3}p_5^{\alpha_5}\ldots p_k^{\alpha_k}>$] (u4) at (-1,2.5) {};
		\node[label=left:$<p_2^{\alpha_2}p_4^{\alpha_4}\ldots p_k^{\alpha_k}>$] (u5) at (-1,2) {};
		
		\node[label=left:$<p_3^{\alpha_3}p_4^{\alpha_4}\ldots p_k^{\alpha_k}>$] (u7) at (-1,1.5) {};
		\node[label=left:$<p_1^{\alpha_1}p_2^{\alpha_2}p_5^{\alpha_5}\ldots p_k^{\alpha_k}>$] (u8) at (-1,1) {};
		% Nodes in set V
		\node[label=right:$e_1$] (v1) at (1,4) {};
		\node[label=right:$e_2$] (v2) at (1,3.5) {};
		\node[label=right:$e_3$] (v3) at (1,3) {};
		\node[label=right:$e_4$] (v4) at (1,2.5) {};
		\node[label=right:$e_5$] (v5) at (1,2) {};
		\node[label=right:$e_6$] (v6) at (1,1.5) {};
		\node[label=right:$e_7$] (v7) at (1,1) {};
		
		% Edges
		\draw (u1) -- (v1);
		\draw (u2) -- (v1);
		\draw (u3) -- (v1);
		\draw (u1) -- (v2);
		\draw (u2) -- (v2);
		\draw (u7) -- (v2);
		\draw (u1) -- (v3);
		\draw (u3) -- (v3);
		\draw (u5) -- (v3);
		\draw (u4) -- (v4);
		\draw (u5) -- (v4);
		\draw (u2) -- (v5);
		\draw (u4) -- (v5);
		\draw (u8) -- (v6);
		\draw (u7) -- (v6);
		\draw (u3) -- (v7);
		\draw (u8) -- (v7);

	\end{tikzpicture}

		\caption{}
		\label{K33}

		\end{figure}
		
\noindent \textbf{Subcase 3.2.} Suppose that  $n=p_1^{\alpha_1}p_2^{\alpha_2}p_3^{\alpha_3}\ldots p_k^{\alpha_k}$, where  $k\geq 4$, $p_1,p_2, \ldots, p_k$ are distinct primes,  $\alpha_1, \alpha_2, \ldots, \alpha_k$ are non-negative integers and atleast one of $\alpha_i$ is greater than or equal to 3. Without loss of generality, assume that $\alpha_1\geq 3$.  Consider the vertices  $H_1=$$<p_1^{\alpha_1}p_3^{\alpha_3}p_4^{\alpha_4}\ldots p_k^{\alpha_k}>$, $H_2=<p_1^{\alpha_1}p_2^{\alpha_2}p_4^{\alpha_4}\ldots p_k^{\alpha_k}>$, $H_3=<p_1^{\alpha_1}p_2^{\alpha_2}p_3^{\alpha_3}p_5^{\alpha_5} \ldots p_k^{\alpha_k}>$, $K_1=$ $<p_2^{\alpha_2}p_3^{\alpha_3}\ldots p_k^{\alpha_k}>$,
	$K_2=<p_1p_2^{\alpha_2}p_3^{\alpha_3}\ldots p_k^{\alpha_k}>$ and 
	$K_3=<p_1^2p_2^{\alpha_2}p_3^{\alpha_3}\ldots p_k^{\alpha_k}>$. Observe that   $H_i \cap H_j=\{e\}$ for  $i\neq j \, \text{and} \, i,j \in \{1,2,3\}.$ Also, $  K_i \cap K_j \neq \{e\}$ and $ H_i \cap K_j = \{e\} \, \, \text{for all} \, \, i,j \in \{1,2,3\}.$ Hence, there exists three distinct hyperedges $e_1,e_2,e_3$ such that $e_1$ contains $H_1,H_2,H_3$ and $K_1$, $e_2$ contains $H_1,H_2,H_3$ and $K_2$, and $e_3$ contains $H_1,H_2,H_3$ and $K_3$.  Hence, the incidence graph $\mathcal{I}(\tilde{\Gamma}_\mathcal{H}(\mathbb{Z}_n))$ of  $\tilde{\Gamma}_\mathcal{H}(\mathbb{Z}_n)$, contains $K_{3,3}$ and so $\tilde{\Gamma}_\mathcal{H}(\mathbb{Z}_n)$ is non-planar.
	\end{proof}
 \noindent We have the following result about orientable genus of a hypergraph to characterize $\tilde{\Gamma}_\mathcal{H}(\mathbb{Z}_n)$ whose orientable genus is 1.
	\begin{theorem}\label{incidencegraph}\cite{walsh1975hypermaps}
		For any hypergraph $\mathcal{H}$, $g(\mathcal{H})=g(\mathcal{I}(\mathcal{H}))$.
	\end{theorem}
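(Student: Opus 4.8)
The plan is to establish the two inequalities $g(\mathcal{I}(\mathcal{H})) \le g(\mathcal{H})$ and $g(\mathcal{H}) \le g(\mathcal{I}(\mathcal{H}))$ separately, each by an explicit geometric transformation that converts one embedding into the other on the \emph{same} surface. Throughout I regard an embedding of $\mathcal{H}$ in a surface as a drawing in which every vertex is a distinct point and every hyperedge $e$ is a closed topological disk $D_e$ whose boundary circle carries exactly the points of the vertices lying in $e$, with the interiors of distinct hyperedge-disks pairwise disjoint; the orientable genus $g(\mathcal{H})$ is the least $n$ for which $\mathcal{H}$ embeds in $S_n$.

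For the first inequality I would start from a minimum-genus embedding of $\mathcal{H}$ in $S_n$, where $n = g(\mathcal{H})$. Inside each hyperedge-disk $D_e$ I place one new point $w_e$ and join it, by pairwise interior-disjoint arcs drawn entirely inside $D_e$, to each vertex point sitting on $\partial D_e$. Since $D_e$ is a disk and the incident vertices appear on its boundary in a definite cyclic order, such a crossing-free ``spider'' of arcs always exists. Taking the points $w_e$ as the edge-side vertices and these arcs as edges produces a crossing-free drawing of the bipartite graph $\mathcal{I}(\mathcal{H})$ in $S_n$, whence $g(\mathcal{I}(\mathcal{H})) \le n = g(\mathcal{H})$.

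For the reverse inequality I would run this construction backwards. Start from a minimum-genus embedding of $\mathcal{I}(\mathcal{H})$ in $S_m$, where $m = g(\mathcal{I}(\mathcal{H}))$; I may assume it is a $2$-cell embedding, since every minimum-genus embedding of a connected graph is cellular, and if $\mathcal{I}(\mathcal{H})$ is disconnected one argues componentwise using additivity of genus. Around each edge-vertex $w_e$ I take a small closed disk and fatten it along the arcs of the star at $w_e$ into a closed disk $D_e$ whose boundary passes through precisely the $\mathcal{H}$-vertices adjacent to $w_e$, that is, the members of $e$; the $2$-cell hypothesis furnishes the surrounding faces into which the star is fattened and guarantees the disks can be taken with pairwise disjoint interiors. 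Deleting $w_e$ together with its star from the interior of each $D_e$, while retaining the remaining vertex points, exhibits $\mathcal{H}$ embedded in $S_m$, so $g(\mathcal{H}) \le m = g(\mathcal{I}(\mathcal{H}))$. Combining the two inequalities yields $g(\mathcal{H}) = g(\mathcal{I}(\mathcal{H}))$.

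I expect the delicate step to be the reverse direction, specifically the passage from the local fattening of each star to a globally consistent family of hyperedge-disks: one must verify that the enlarged disks admit pairwise disjoint interiors and that each $\partial D_e$ meets exactly the intended vertices and no others. Reducing to a $2$-cell embedding is precisely what makes this manageable, since the faces incident to $w_e$ supply the room into which the star expands. As an independent confirmation that neither transformation alters the genus, one can carry out an Euler-characteristic bookkeeping, using $|V(\mathcal{I}(\mathcal{H}))| = |V(\mathcal{H})| + |E(\mathcal{H})|$ and $|E(\mathcal{I}(\mathcal{H}))| = \sum_{e} |e|$ together with the matching count of faces in the two cellular pictures.
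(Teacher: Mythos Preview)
The paper does not supply a proof of this theorem at all; it is quoted verbatim as a cited result from Walsh~\cite{walsh1975hypermaps} and then used as a black box in the genus computations that follow. So there is no ``paper's own proof'' to compare your proposal against.

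For what it is worth, your sketch is essentially Walsh's original argument: one passes between a hypermap (vertices as points, hyperedges as closed disks with pairwise disjoint interiors whose boundaries carry the incident vertices) and an embedding of the incidence bipartite graph by inserting or collapsing a star inside each hyperedge disk. Your identification of the delicate step---ensuring that in the reverse direction the fattened stars yield disks with pairwise disjoint interiors meeting exactly the intended vertices---is accurate, and the reduction to a $2$-cell embedding is the standard way to handle it. Thus your proposal is correct and coincides with the approach in the cited source, but the present paper itself offers nothing beyond the citation.
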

\begin{theorem}
	The orientable genus, $g(\tilde{\Gamma}_\mathcal{H}(\mathbb{Z}_n))$ of $\tilde{\Gamma}_\mathcal{H}(\mathbb{Z}_n)$ is equal to 1  if and only if $n =p_1^{\alpha}p_2^3$ with $ 3 \leq \alpha \leq 6$ or   $n=p_1^4p_2^4$ or  $n=p_1^{\beta}p_2p_3$ with $3 \leq \beta \leq 5$, where $p_1,p_2$ and $p_3$ are distinct primes.

\end{theorem}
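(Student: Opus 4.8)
The plan is to work entirely with the incidence graph. By Theorem \ref{incidencegraph}, $g(\tilde{\Gamma}_\mathcal{H}(\mathbb{Z}_n)) = g(\mathcal{I}(\tilde{\Gamma}_\mathcal{H}(\mathbb{Z}_n)))$, so it suffices to determine for which $n$ the bipartite incidence graph $\mathcal{I}(\tilde{\Gamma}_\mathcal{H}(\mathbb{Z}_n))$ has orientable genus exactly $1$. Since the planar (genus $0$) cases are precisely those listed in Theorem \ref{planarity}, every $n$ outside that list already satisfies $g \geq 1$; the task is therefore to separate the toroidal cases ($g=1$) from those forcing $g \geq 2$.

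For the sufficiency direction I would treat each admissible family $n = p_1^{\alpha}p_2^3$ ($3 \leq \alpha \leq 6$), $n = p_1^4 p_2^4$, and $n = p_1^{\beta} p_2 p_3$ ($3 \leq \beta \leq 5$) separately. For each of these, Theorem \ref{planarity} already yields non-planarity, so $g \geq 1$; it then remains to exhibit an explicit embedding of $\mathcal{I}(\tilde{\Gamma}_\mathcal{H}(\mathbb{Z}_n))$ on the torus, which forces $g \leq 1$ and hence $g = 1$. Using Theorem \ref{vertexset} I would first enumerate the vertex-subgroups and the maximal hyperedges, build the incidence graph, and then present a crossing-free drawing on the fundamental square of the torus (equivalently, a rotation scheme) realizing it.

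For the necessity direction the main tool is the Euler-formula lower bound for the genus of a triangle-free graph. Writing $\mathcal{I} = \mathcal{I}(\tilde{\Gamma}_\mathcal{H}(\mathbb{Z}_n))$, which is bipartite and hence has girth at least $4$, Euler's formula together with the face-count inequality $2|E| \geq 4|F|$ gives $g(\mathcal{I}) \geq \left\lceil \tfrac{|E| - 2|V| + 4}{4} \right\rceil$, where $|V|$ is the number of qualifying subgroups (Theorem \ref{vertexset}) plus the number of maximal hyperedges, and $|E| = \sum_{e} |e|$ is the total number of incidences. I would compute or bound these two quantities as functions of $\omega(n)$ and the exponents $\alpha_i$, and show that for every non-planar $n$ outside the three admissible families one has $|E| > 2|V|$, so the right-hand side is at least $2$.

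The hard part will be the boundary cases, where the Euler estimate is not by itself decisive. Just inside each family (for instance $n = p_1^6 p_2^3$ or $n = p_1^5 p_2 p_3$) the incidence graph must still be shown to embed on the torus, while just outside it (for instance $n = p_1^7 p_2^3$, $n = p_1^6 p_2 p_3$, or $n = p_1^5 p_2^4$) the vertex and incidence counts must be pushed so that $|E| - 2|V| + 4 > 4$. I expect the delicate step to be an accurate, uniform enumeration of the maximal hyperedges and their cardinalities for general exponents, since these hyperedges are exactly the maximal cliques of $\mathscr{I}^c(\mathbb{Z}_n)^{\star}$ and their number grows combinatorially in the $\alpha_i$; the genus-$1$ threshold in each parameter should then emerge from comparing this growth against the (essentially linear) vertex count. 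Once the hyperedge structure is described in closed form, the explicit torus embeddings at the boundary and the Euler bound just beyond it together close the characterization.
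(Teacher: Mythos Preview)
Your overall plan is more systematic than the paper's own argument, which handles both directions entirely through pictures on the fundamental square of the torus, case by case: explicit drawings to show $g\le 1$, and drawings of \emph{failed} embeddings to claim $g\ge 2$.  However, the specific Euler bound you propose cannot close the necessity direction.

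In the $\omega(n)=2$ regime the hypergraph is $2$-uniform: a vertex $\langle p_1^{\alpha_1}p_2^{a}\rangle$ and a vertex $\langle p_1^{b}p_2^{\alpha_2}\rangle$ always meet trivially, any two from the same side do not, and the maximal cliques of the resulting complete bipartite graph $K_{\alpha_1,\alpha_2}$ are just its edges.  Hence for $n=p_1^{7}p_2^{3}$ the incidence graph has $10$ subgroup-vertices, $21$ hyperedge-vertices and $42$ incidences, so $|E|-2|V|+4=-16$ and your girth-$4$ estimate $\lceil(|E|-2|V|+4)/4\rceil$ is negative.  This is not an accident of that example: for any $2$-uniform hypergraph $\mathcal{I}$ is the barycentric subdivision of the underlying graph, and each subdivision step increments $|E|$ and $|V|$ by one, so $|E|-2|V|$ strictly decreases.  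Your stated bound on $\mathcal{I}$ is therefore always weaker than the same bound on the original graph, and you will never be able to ``push'' the counts so that $|E|>2|V|$ in these cases.

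The clean repair for $\omega(n)=2$ is to bypass $\mathcal{I}$ entirely: since genus is invariant under subdivision, $g(\mathcal{I})=g(K_{\alpha_1,\alpha_2})=\lceil(\alpha_1-2)(\alpha_2-2)/4\rceil$ by Ringel's formula, and reading off when this equals $1$ yields exactly $p_1^{\alpha}p_2^{3}$ with $3\le\alpha\le 6$ together with $p_1^{4}p_2^{4}$, simultaneously handling sufficiency and necessity with no drawings.  For $\omega(n)\ge 3$ the hyperedges are no longer all of size $2$ and your Euler estimate on $\mathcal{I}$ may have some force, but you should check the borderline cases $n=p_1^{6}p_2p_3$, $n=p_1^{2}p_2^{2}p_3$ and $n=p_1p_2p_3p_4$ numerically before relying on it; if it still falls short you will need an explicit genus-$2$ obstruction in $\mathcal{I}$ (e.g.\ a subgraph homeomorphic to $K_{3,7}$, $K_{4,5}$, or two vertex-disjoint copies of $K_{3,3}$), which is essentially what the paper's figures are standing in for.
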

\begin{proof} 	
	Suppose that  $n=p_1^{\alpha_1}p_2^{\alpha_2}\ldots p_k^{\alpha_k}$, where  $k \geq 2$, $p_1,p_2, \ldots, p_k$ are distinct primes and $\alpha_1, \alpha_2, \ldots, \alpha_k$ are non-negative integers. 
	Consider the following cases:\\
	\textbf{Case 1.} Suppose that $\omega(n) =2$.\\
	\textbf{Subcase 1.1.} If $n=p_1^{\alpha_1}p_2$ or $n=p_1^{\alpha_1}p_2^2$, then by Theorem \ref{planarity} $\tilde{\Gamma}_\mathcal{H}(\mathbb{Z}_n)$ is planar and hence $g(\tilde{\Gamma}_\mathcal{H}(\mathbb{Z}_n)) \neq 1$. \\
	\textbf{Subcase 1.2.} If $n \neq p_1^{\alpha_1}p_2$ and $n \neq p_1^{\alpha_1}p_2^2$, then by Theorem \ref{planarity}, $g(\tilde{\Gamma}_\mathcal{H}(\mathbb{Z}_n))> 0$. In this subcase, we have only the following subsubcases:\\
	\textbf{Subsubcase 1.2.1.} If $n$ divides $p_1^6 p_2^3$, then  $\mathcal{I}(\tilde{\Gamma}_\mathcal{H}(\mathbb{Z}_n))$ is isomorphic to a subgraph of $\mathcal{I}(\tilde{\Gamma}_\mathcal{H}(\mathbb{Z}_{p_1^6p_2^3}))$ and  Figure \ref{torusembed1}(a) depicts the embedding of $\mathcal{I}(\tilde{\Gamma}_\mathcal{H}(\mathbb{Z}_{p_1^6p_2^3}))$ on a torus.  Hence, in this case, $\tilde{\Gamma}_\mathcal{H}(\mathbb{Z}_n)$ is embeddable on torus and therefore, $g(\tilde{\Gamma}_\mathcal{H}(\mathbb{Z}_n)) =1$.\\ 
	\noindent \textbf{Subsubcase 1.2.2.} If $n$ is divisible by $p_1^7p_2^3$, then $\mathcal{I}(\tilde{\Gamma}_\mathcal{H}(\mathbb{Z}_n))$ contains a subgraph isomorphic to  $\mathcal{I}(\tilde{\Gamma}_\mathcal{H}(\mathbb{Z}_{p_1^7p_2^3}))$ and Figure \ref{torusembed1}(b) depicts that   $\mathcal{I}(\tilde{\Gamma}_\mathcal{H}(\mathbb{Z}_{p_1^7p_2^3}))$ is not embeddable on a torus. Hence, in this case,  $\mathcal{I}(\tilde{\Gamma}_\mathcal{H}(\mathbb{Z}_n))$ is not embeddable on a torus. Therefore, $g(\tilde{\Gamma}_\mathcal{H}(\mathbb{Z}_n)) \neq 1$.	\begin{figure}[htbp!]
		\centering
		
		\subfloat[An embedding of $\mathcal{I}(\tilde{\Gamma}_\mathcal{H}(\mathbb{Z}_{p_1^6p_2^3}))$ on a torus]{\begin{tikzpicture}[scale=.7, every node/.style={circle, fill=black, minimum size=4pt, inner sep=0pt}]
				% Draw a circle with center at (0,0) and radius 3
				\draw[thick] (-6,-5) rectangle (7,4); % Draws a rectangle from (0,0) to (4,2)
				
				% Nodes inside the circle
				\node[label=left:\tiny{${<p_2^3>}$}] (u1) at (0,-2.5) {};
				\node[label=right:\tiny{${<p_1p_2^3>}$}] (u2) at (3,0) {};
				\node[label=right:\tiny{${<p_1^2p_2^3>}$}] (u3) at (3,1) {};
				\node[label=left:\tiny{${<p_1^6p_2^2>}$}] (u4) at (1,-1.5) {};
				\node[label=right:\tiny{${<p_1^6p_2>}$}] (u5) at (7,-2) {};
				\node[label=right:\tiny{${<p_1^6>}$}] (u6) at (7,1.5) {};
				\node[label=left:\tiny{${<p_1^6>}$}] (u7) at (-6,1.5) {};
				\node[label=left:\tiny{${<p_1^6p_2>}$}] (u8) at (-6,-2) {};
				\node[label=right:\tiny{${<p_1^3p_2^3>}$}] (u9) at (0.2,3) {};
				\node[label=above right:\tiny{${<p_1^4p_2^3>}$}] (u10) at (4,4) {};
				\node[label=below right:\tiny{${<p_1^4p_2^3>}$}] (u11) at (4,-5) {};
				\node[label=above:\tiny{${<p_1^5p_2^3>}$}] (u12) at (3,4) {};
				\node[label=below:\tiny{${<p_1^5p_2^3>}$}] (u13) at (3,-5) {};

				\node[label=below:\tiny{$e_{13}$}] (v3) at (1,-2) {};
				\node[label=below right:\tiny{$e_{14}$}] (v2) at (2,0) {};
				\node[label=left:\tiny{$e_2$}] (v1) at (5,-1) {};
				\node[label=below:\tiny{$e_9$}] (v4) at (0,1.2) {};
				\node[label=above:\tiny{$e_3$}] (v5) at (4,2) {};
				\node[label=above:\tiny{$e_7$}] (v6) at (-3,-4) {};
				\node[label=above:\tiny{$e_8$}] (v7) at (5,-2) {};
				\node[label=below:\tiny{$e_1$}] (v9) at (0,-5) {};
				\node[label=above:\tiny{$e_1$}] (v10) at (0,4) {};
				\node[label=above right:\tiny{$e_4$}] (v13) at (-1,3) {};
				\node[label=above:\tiny{$e_{10}$}] (v14) at (-1,2) {};
				\node[label=below:\tiny{$e_{16}$}] (v11) at (1,-5) {};
				\node[label=above:\tiny{$e_{16}$}] (v12) at (1,4) {};
				\node[label=right:\tiny{$e_5$}] (v15) at (5,3) {};
				\node[label=left:\tiny{$e_{11}$}] (v16) at (5,-3) {};
				\node[label=right:\tiny{$e_{17}$}] (v17) at (3,-2) {};
				\node[label=left:\tiny{$e_{15}$}] (v18) at (2,.8) {};
				\node[label=left:\tiny{$e_{18}$}] (v19) at (2.5,-3) {};
				\node[label=above:\tiny{$e_6$}] (v20) at (4,3) {};
				\node[label=below:\tiny{$e_{12}$}] (v21) at (2.2,2.8) {};
				%\draw (u4) -- (u5);
				
				%\draw (u14) to[out=0, in=270](u3);
				
				%\draw[thick, blue] (u2) to[out=60, in=180](u4);
				
				\draw (u2) -- (v1);
				\draw (v1) -- (u6);
				\draw (u4) -- (v2);
				\draw (v2) -- (u2);
				\draw (u4) -- (v3);
				\draw (v3) -- (u1);
				\draw (u8) -- (v4);
				\draw (v4) -- (u3);
				\draw (u6) -- (v5);
				\draw (v5) -- (u3);
				\draw (v6) -- (u1);
				\draw (u1) -- (v9);
				\draw (u7) -- (v10);
				\draw (u7) -- (v13);
				\draw (u8) -- (v14);
				\draw (u9) -- (v13);
				\draw (u9) -- (v14);
				\draw (u10) -- (v15);
				\draw (u6) -- (v15);
				\draw (u11) -- (v16);
				\draw (u5) -- (v16);
				\draw (u11) -- (v17);
				\draw (v7) -- (u2);
				\draw (u3) -- (v18);
				\draw (u4) -- (v18);
				\draw (u12) -- (v20);
				\draw (u6) -- (v20);
				\draw (u12) -- (v21);
				\draw (u8) -- (v21);
				\draw (u13) -- (v19);

				\draw (u5) to[out=90, in=330](v7);
				\draw (u8) to[out=270, in=180](v6);
				\draw (u4) to[out=0, in=90](v11);
				\draw (u9) -- (v12);
				\draw (u4) to[out=20, in=100](v17);
				\draw (u4) to[out=10, in=130](v19);
				
				%\draw (v6) to[out=0, in=0](u1);
				\draw[->,] (-2,4) -- (-1,4) [right] {};
				\draw[->,] (-2,-5) -- (-1,-5) [right] {};
				\draw[->,] (7,0) -- (7,1) [right] {};
				\draw[->,] (-6,0) -- (-6,1) [right] {};
		\end{tikzpicture}}
		\hspace{.2cm}
		\subfloat[$\mathcal{I}(\tilde{\Gamma}_\mathcal{H}(\mathbb{Z}_{p_1^7p_2^3}))$ on a torus]{\begin{tikzpicture}[scale=.7, every node/.style={circle, fill=black, minimum size=4pt, inner sep=0pt}]
				% Draw a circle with center at (0,0) and radius 3
				\draw (-6,-5) rectangle (7,4); % Draws a rectangle from (0,0) to (4,2)
				
				% Nodes inside the circle
				\node[label=left:\tiny{${<p_2^3>}$}] (u1) at (0,-2.5) {};
				\node[label=below:\tiny{${<p_1p_2^3>}$}] (u2) at (3,0) {};
				\node[label=right:\tiny{${<p_1^2p_2^3>}$}] (u3) at (3,1) {};
				\node[label=left:\tiny{${<p_1^7p_2^2>}$}] (u4) at (1,-1.5) {};
				\node[label=right:\tiny{${<p_1^7p_2>}$}] (u5) at (7,-2) {};
				\node[label=right:\tiny{${<p_1^7>}$}] (u6) at (7,1.5) {};
				\node[label=left:\tiny{${<p_1^7>}$}] (u7) at (-6,1.5) {};
				\node[label=left:\tiny{${<p_1^7p_2>}$}] (u8) at (-6,-2) {};
				\node[label=right:\tiny{${<p_1^3p_2^3>}$}] (u9) at (0.2,3) {};
				\node[label=above right:\tiny{${<p_1^4p_2^3>}$}] (u10) at (4,4) {};
				\node[label=below right:\tiny{${<p_1^4p_2^3>}$}] (u11) at (4,-5) {};
				\node[label=above:\tiny{${<p_1^5p_2^3>}$}] (u12) at (-3,1) {};
				\node[label=below:\tiny{$<p_1^6p_2^3>$}] (u13) at (3,-5) {};
				\node[label=above:\tiny{$<p_1^6p_2^3>$}] (u14) at (3,4) {};

				\node[label=below:\tiny{$e_{15}$}] (v3) at (1,-2) {};
				\node[label=below right:\tiny{$e_{16}$}] (v2) at (2,0) {};
				\node[label=left:\tiny{$e_2$}] (v1) at (5,-1) {};
				\node[label=below:\tiny{$e_{10}$}] (v4) at (0,1.4) {};
				\node[label=above:\tiny{$e_{15}$}] (v5) at (4,2) {};
				\node[label=above:\tiny{$e_8$}] (v6) at (-3,-4) {};
				\node[label=above:\tiny{$e_9$}] (v7) at (5,-2) {};
				\node[label=above:\tiny{$e_8$}] (v8) at (5,0) {};
				\node[label=below:\tiny{$e_1$}] (v9) at (0,-5) {};
				\node[label=above:\tiny{$e_1$}] (v10) at (0,4) {};
				\node[label=above:\tiny{$e_4$}] (v13) at (-1,3) {};
				\node[label=above:\tiny{$e_{11}$}] (v14) at (-1,2) {};
				\node[label=below:\tiny{$e_{18}$}] (v11) at (1,-5) {};
				\node[label=above:\tiny{$e_{18}$}] (v12) at (1,4) {};
				\node[label=right:\tiny{$e_5$}] (v15) at (5,3) {};
				\node[label=left:\tiny{$e_{12}$}] (v16) at (5,-3) {};
				\node[label=right:\tiny{$e_{19}$}] (v17) at (3,-2) {};
				\node[label=below left:\tiny{$e_6$}] (v18) at (-4,1) {};
				\node[label=left:\tiny{$e_{13}$}] (v19) at (-4.5,0) {};
				\node[label=below:\tiny{$e_{20}$}] (v20) at (2,-5) {};
				\node[label=above:\tiny{$e_{20}$}] (v21) at (2,4) {};
				\node[label=above:\tiny{$e_7$}] (v22) at (4,3) {};
				\node[label=above:\tiny{$e_{14}$}] (v23) at (0,1.8) {};
				\node[label=right:\tiny{$e_{21}$}] (v24) at (2.6,-3) {};
				\node[label=left:\tiny{$e_{17}$}] (v25) at (2,.8) {};
				%\draw (u4) -- (u5);
				
				%\draw (u14) to[out=0, in=270](u3);
				
				%\draw[thick, blue] (u2) to[out=60, in=180](u4);
				
				\draw (u2) -- (v1);
				\draw (v1) -- (u6);
				\draw (u4) -- (v2);
				\draw (v2) -- (u2);
				\draw (u4) -- (v3);
				\draw (v3) -- (u1);
				\draw (u8) -- (v4);
				\draw (v4) -- (u3);
				\draw (u6) -- (v5);
				\draw (v5) -- (u3);
				\draw (v6) -- (u1);
				\draw (u6) -- (v8);
				\draw (u3) -- (v8);
				\draw (u1) -- (v9);
				\draw (u7) -- (v10);
				\draw (u7) -- (v13);
				\draw (u8) -- (v14);
				\draw (u9) -- (v13);
				\draw (u9) -- (v14);
				\draw (u10) -- (v15);
				\draw (u6) -- (v15);
				\draw (u11) -- (v16);
				\draw (u5) -- (v16);
				\draw (u11) -- (v17);
				\draw (u7) -- (v18);
				\draw (u12) -- (v18);
				\draw (u8) -- (v19);
				\draw (u12) -- (v19);
				\draw (u14) -- (v22);
				\draw (u6) -- (v22);
				\draw (u13) -- (v24);
				\draw (v7) -- (u2);
				\draw (u3) -- (v25);
				\draw (u4) -- (v25);
				
				\draw (u5) to[out=90, in=330](v7);
				\draw (u8) to[out=270, in=180](v6);
				\draw (u4) to[out=0, in=90](v11);
				\draw (u9) -- (v12);
				\draw (u4) to[out=0, in=100](v17);
				\draw (u12) to[out=30, in=330](v21);
				\draw (u4) to[out=0, in=90](v20);
				\draw (u8) to[out=30, in=200](v23);
				\draw (u14) to[out=270, in=0](v23);
				\draw (u4) to[out=0, in=100](v24);
				
				%\draw (v6) to[out=0, in=0](u1);
				\draw[->,] (-2,4) -- (-1,4) [right] {};
				\draw[->,] (-2,-5) -- (-1,-5) [right] {};
				\draw[->,] (7,0) -- (7,1) [right] {};
				\draw[->,] (-6,0) -- (-6,1) [right] {};
		\end{tikzpicture}}
		\caption{}
		\label{torusembed1}
	\end{figure}\\
\noindent \textbf{Subsubcase 1.2.3.} If $n=p_1^4p_2^4$, then Figure \ref{torusembed2}(a) depicts an embedding of $\mathcal{I}(\tilde{\Gamma}_\mathcal{H}(\mathbb{Z}_{p_1^4p_2^4}))$ on a torus. Therefore, $g(\tilde{\Gamma}_\mathcal{H}(\mathbb{Z}_{p_1^4p_2^4}))=1$.\\
	\noindent \textbf{Subsubcase 1.2.4} If $n$ is divisible by $p_1^5p_2^4$, then $\mathcal{I}(\tilde{\Gamma}_\mathcal{H}(\mathbb{Z}_n))$ contains a subgraph isomorphic to  $\mathcal{I}(\tilde{\Gamma}_\mathcal{H}(\mathbb{Z}_{p_1^5p_2^4}))$ and Figure \ref{torusembed2}(b) depicts that  $\mathcal{I}(\tilde{\Gamma}_\mathcal{H}(\mathbb{Z}_{p_1^5p_2^4}))$ is not embeddable on a torus. Hence, in this case,  $\mathcal{I}(\tilde{\Gamma}_\mathcal{H}(\mathbb{Z}_n))$ is also not embeddable on a torus. Therefore, $g(\tilde{\Gamma}_\mathcal{H}(\mathbb{Z}_n))\neq 1$.
	\begin{figure}[htbp!]
		\centering
		\subfloat[An embedding of $\mathcal{I}(\tilde{\Gamma}_\mathcal{H}(\mathbb{Z}_{p_1^4p_2^4}))$ on a torus]{\begin{tikzpicture}[scale=.7, every node/.style={circle, fill=black, minimum size=4pt, inner sep=0pt}]
					% Draw a circle with center at (0,0) and radius 3
					\draw (-6,-5) rectangle (7,4); % Draws a rectangle from (0,0) to (4,2)
					
					% Nodes inside the circle
					\node[label=left:\tiny{${<p_2^4>}$}] (u1) at (0,-2.5) {};
					\node[label=right:\tiny{${<p_1p_2^4>}$}] (u2) at (7,-1) {};
					\node[label=right:\tiny{${<p_1^2p_2^4>}$}] (u3) at (3,1) {};
					\node[label=right:\tiny{${<p_1^4p_2^2>}$}] (u4) at (1,-1.5) {};
					\node[label=below:\tiny{${<p_1^4p_2>}$}] (u5) at (7,-2) {};
					\node[label=right:\tiny{${<p_1^4>}$}] (u6) at (7,1.5) {};
					\node[label=left:\tiny{${<p_1^4>}$}] (u7) at (-6,1.5) {};
					\node[label=right:\tiny{${<p_1^4p_2>}$}] (u8) at (-6,-2) {};
					\node[label=right:\tiny{${<p_1^3p_2^4>}$}] (u9) at (3,3) {};
					\node[label=right:\tiny{${<p_1^4p_2^3>}$}] (u10) at (1,2) {};
					\node[label=left:\tiny{${<p_1p_2^4>}$}] (u11) at (-6,-1) {};
					
					\node[label=below:\tiny{$e_9$}] (v3) at (1,-2) {};
					
					\node[label=above:\tiny{$e_2$}] (v19) at (-4,1) {};
					\node[label=left:\tiny{$e_{11}$}] (v1) at (2,1) {};
					5) {};
					\node[label=above:\tiny{$e_3$}] (v5) at (5,1.5) {};
					\node[label=above:\tiny{$e_5
						$}] (v6) at (-3,-4) {};
					\node[label=above:\tiny{$e_6$}] (v7) at (4,-2) {};
					\node[label=below:\tiny{$e_1$}] (v9) at (0,-5) {};
					\node[label=above:\tiny{$e_1$}] (v10) at (0,4) {};
					\node[label=above:\tiny{$e_4$}] (v13) at (0,3) {};
					\node[label=below:\tiny{$e_{12}$}] (v11) at (1,-5) {};
					\node[label=above:\tiny{$e_{12}$}] (v12) at (1,4) {};
					\node[label=left:\tiny{$e_{13}$}] (v15) at (0,0) {};
					\node[label=left:\tiny{$e_{14}$}] (v16) at (-3,1) {};
					\node[label=left:\tiny{$e_{15}$}] (v17) at (2,1.5) {};
					\node[label=right:\tiny{$e_{16}$}] (v18) at (3,2.5) {};
					\node[label=above:\tiny{$e_7$}] (v20) at (5,-5) {};
					\node[label=above:\tiny{$e_8$}] (v21) at (3,4) {};
					\node[label=below:\tiny{$e_8$}] (v22) at (3,-5) {};
					\node[label=above:\tiny{$e_{10}$}] (v23) at (2.5,0) {};
					\node[label=above:\tiny{$e_7$}] (v24) at (5,4) {};
					%\draw (u4) -- (u5);
					
					%\draw (u14) to[out=0, in=270](u3);
					
					%\draw[thick, blue] (u2) to[out=60, in=180](u4);
					
					\draw (u4) -- (v1);
					\draw (v1) -- (u3);
					\draw (u4) -- (v3);
					\draw (v3) -- (u1);
					\draw (u6) -- (v5);
					\draw (v5) -- (u3);
					\draw (v6) -- (u1);
					\draw (u1) -- (v9);
					\draw (u7) -- (v10);
					\draw (u7) -- (v13);
					\draw (u9) -- (v13);
					\draw (u1) -- (v15);
					\draw (u10) -- (v15);
					\draw (u11) -- (v16);
					\draw (u10) -- (v16);
					\draw (u3) -- (v17);
					\draw (u10) -- (v17);
					\draw (u9) -- (v18);
					\draw (u10) -- (v18);
					\draw (u7) -- (v19);
					\draw (u11) -- (v19);
					\draw (u3) -- (v24);
					\draw (u5) -- (v20);
					\draw (u9) -- (v21);
					\draw (u5) -- (v22);
					\draw (u2) -- (v23);
					\draw (u4) -- (v23);
					
					\draw (u5) to[out=200, in=0](v7);
					\draw (v7) to[out=0, in=200](u2);
					\draw (u8) to[out=270, in=180](v6);
					\draw (u4) to[out=330, in=90](v11);
					\draw (u9) -- (v12);
					%\draw (v6) to[out=0, in=0](u1);
					\draw[->,] (0,4) -- (2,4) [right] {};
					\draw[->,] (0,-5) -- (2,-5) [right] {};
					\draw[->,] (7,0) -- (7,1) [right] {};
					\draw[->,] (-6,0) -- (-6,1) [right] {};
			\end{tikzpicture}}
		\hspace{.2cm}
		\subfloat[$\mathcal{I}(\tilde{\Gamma}_\mathcal{H}(\mathbb{Z}_{p_1^5p_2^4}))$ on a torus]{\begin{tikzpicture}[scale=.7, every node/.style={circle, fill=black, minimum size=4pt, inner sep=0pt}]
					% Draw a circle with center at (0,0) and radius 3
					\draw (-6,-5) rectangle (7,4); % Draws a rectangle from (0,0) to (4,2)
					
					% Nodes inside the circle
					\node[label=left:\tiny{${<p_2^4>}$}] (u1) at (0,-2.5) {};
					\node[label=right:\tiny{${<p_1p_2^4>}$}] (u2) at (7,-1) {};
					\node[label=right:\tiny{${<p_1^2p_2^4>}$}] (u3) at (3,1) {};
					\node[label=right:\tiny{${<p_1^5p_2^2>}$}] (u4) at (1,-1.5) {};
					\node[label=right:\tiny{${<p_1^5p_2>}$}] (u5) at (7,-2) {};
					\node[label=right:\tiny{${<p_1^5>}$}] (u6) at (7,1.5) {};
					\node[label=left:\tiny{${<p_1^5>}$}] (u7) at (-6,1.5) {};
					\node[label=left:\tiny{${<p_1^5p_2>}$}] (u8) at (-6,-2) {};
					\node[label=right:\tiny{${<p_1^3p_2^4>}$}] (u9) at (3,3) {};
					\node[label=left:\tiny{${<p_1^5p_2^3>}$}] (u10) at (1,2) {};
					\node[label=left:\tiny{${<p_1p_2^4>}$}] (u11) at (-6,-1) {};
					\node[label=above:\tiny{${<p_1^4p_2^4>}$}] (u12) at (4.2,4) {};
					\node[label=below:\tiny{${<p_1^4p_2^4>}$}] (u13) at (4.2,-5) {};
					
					\node[label=below:\tiny{$e_{11}$}] (v3) at (1,-2) {};
					\node[label=above:\tiny{$e_2$}] (v19) at (-4,1) {};
					\node[label=left:\tiny{$e_{13}$}] (v1) at (2,1) {};
					\node[label=above:\tiny{$e_3$}] (v5) at (5,1.5) {};
					\node[label=above:\tiny{$e_6$}] (v6) at (-3,-4) {};
					\node[label=above:\tiny{$e_7$}] (v7) at (4,-2) {};
					\node[label=below:\tiny{$e_1$}] (v9) at (0,-5) {};
					\node[label=above:\tiny{$e_1$}] (v10) at (0,4) {};
					\node[label=above:\tiny{$e_4$}] (v13) at (0,3) {};
					\node[label=below:\tiny{$e_{14}$}] (v11) at (1,-5) {};
					\node[label=above:\tiny{$e_{14}$}] (v12) at (1,4) {};
					\node[label=left:\tiny{$e_{16}$}] (v15) at (0,0) {};
					\node[label=left:\tiny{$e_{17}$}] (v16) at (-3,1) {};
					\node[label=left:\tiny{$e_{18}$}] (v17) at (2,1.5) {};
					\node[label=right:\tiny{$e_{19}$}] (v18) at (3,2.5) {};
					\node[label=above:\tiny{$e_8$}] (v20) at (5,-5) {};
					\node[label=above:\tiny{$e_9$}] (v21) at (3,4) {};
					\node[label=below:\tiny{$e_9$}] (v22) at (3,-5) {};
					\node[label=above:\tiny{$e_{12}$}] (v23) at (2.5,0) {};
					\node[label=above:\tiny{$e_8$}] (v24) at (5,4) {};
					\node[label=right:\tiny{$e_{20}$}] (v25) at (3.2,2) {};
					\node[label=left:\tiny{$e_{10}$}] (v26) at (4.8,-4) {};
					\node[label=left:\tiny{$e_{15}$}] (v27) at (3,-3) {};
					\node[label=above:\tiny{$e_5$}] (v28) at (5.5,2.8) {};
					%\draw (u4) -- (u5);
					
					%\draw (u14) to[out=0, in=270](u3);
					
					%\draw[thick, blue] (u2) to[out=60, in=180](u4);
					
					\draw (u4) -- (v1);
					\draw (v1) -- (u3);
					\draw (u4) -- (v3);
					\draw (v3) -- (u1);
					\draw (u6) -- (v5);
					\draw (v5) -- (u3);
					\draw (v6) -- (u1);
					\draw (u1) -- (v9);
					\draw (u7) -- (v10);
					\draw (u7) -- (v13);
					\draw (u9) -- (v13);
					\draw (u1) -- (v15);
					\draw (u10) -- (v15);
					\draw (u11) -- (v16);
					\draw (u10) -- (v16);
					\draw (u3) -- (v17);
					\draw (u10) -- (v17);
					\draw (u9) -- (v18);
					\draw (u10) -- (v18);
					\draw (u7) -- (v19);
					\draw (u11) -- (v19);
					\draw (u3) -- (v24);
					\draw (u5) -- (v20);
					\draw (u9) -- (v21);
					\draw (u5) -- (v22);
					\draw (u2) -- (v23);
					\draw (u4) -- (v23);
					\draw (u10) -- (v25);
					\draw (u12) -- (v25);
					\draw (u13) -- (v26);
					\draw (u5) -- (v26);
					\draw (u13) -- (v27);
					\draw (u4) -- (v27);
					\draw (u12) -- (v28);
					\draw (u6) -- (v28);
					
					\draw (u5) to[out=200, in=0](v7);
					\draw (v7) to[out=0, in=200](u2);
					\draw (u8) to[out=270, in=180](v6);
					\draw (u4) to[out=330, in=90](v11);
					\draw (u9) -- (v12);
					%\draw (v6) to[out=0, in=0](u1);
					\draw[->,] (0,4) -- (2,4) [right] {};
					\draw[->,] (0,-5) -- (2,-5) [right] {};
					\draw[->,] (7,0) -- (7,1) [right] {};
					\draw[->,] (-6,0) -- (-6,1) [right] {};
			\end{tikzpicture}}
		\caption{}
		\label{torusembed2}
	\end{figure}\\
	\noindent\textbf{Case 2.} Suppose $\omega(n)  = 3$.\\
\textbf{Subcase 2.1.} If $n=p_1p_2p_3$ or $n=p_1^2p_2p_3$, then by Theorem \ref{planarity}, $\tilde{\Gamma}_\mathcal{H}(\mathbb{Z}_n)$ is planar and hence, $g(\tilde{\Gamma}_\mathcal{H}(\mathbb{Z}_n)) \neq 1$.\\
\textbf{Subcase 2.2.}  If $n \neq p_1p_2p_3$ or $n \neq p_1^2p_2p_3$, then we have the following cases:\\
 \noindent \textbf{Subsubcase 2.2.1.} If $n$ divides $p_1^5p_2p_3$, then $\mathcal{I}(\tilde{\Gamma}_\mathcal{H}(\mathbb{Z}_n))$ is  isomorphic to a subgraph of  $\mathcal{I}(\tilde{\Gamma}_\mathcal{H}(\mathbb{Z}_{p_1^5p_2p_3}))$ and Figure \ref{torusembed3}(a)  depicts an embedding of $\mathcal{I}(\tilde{\Gamma}_\mathcal{H}(\mathbb{Z}_{p_1^5p_2p_3}))$  on a torus.  Hence, in this case,  $\mathcal{I}(\tilde{\Gamma}_\mathcal{H}(\mathbb{Z}_n))$ is also embeddable on a torus. Therefore, $g(\tilde{\Gamma}_\mathcal{H}(\mathbb{Z}_n)) = 1$.\\
 \textbf{Subsubcase 2.2.2.} If $n$ is divisible by  $p_1^6p_2p_3$, then $\mathcal{I}(\tilde{\Gamma}_\mathcal{H}(\mathbb{Z}_n))$ contains a subgraph isomorphic to $\mathcal{I}(\tilde{\Gamma}_\mathcal{H}(\mathbb{Z}_{p_1^6p_2p_3}))$ and Figure \ref{torusembed3}(b) depicts that  $\mathcal{I}(\tilde{\Gamma}_\mathcal{H}(\mathbb{Z}_{p_1^6p_2p_3}))$  is not embeddable on a torus. Hence, in this case,  $\mathcal{I}(\tilde{\Gamma}_\mathcal{H}(\mathbb{Z}_n))$ is also not embeddable on a torus and therefore, $g(\tilde{\Gamma}_\mathcal{H}(\mathbb{Z}_n)) \neq  1$.
 \begin{figure}[htbp!]
 	\centering
 \subfloat[An embedding of $\mathcal{I}(\tilde{\Gamma}_\mathcal{H}(\mathbb{Z}_{p_1^5p_2p_3}))$  on a torus]{\begin{tikzpicture}[scale=1.2, every node/.style={circle, fill=black, minimum size=4pt, inner sep=0pt}]
 				% Draw a circle with center at (0,0) and radius 3
 				\draw (-5,.5) rectangle (3,7); % Draws a rectangle from (0,0) to (4,2)
 				
 				% Nodes in set U
 				\node[label=left:\tiny{$<p_1^5p_2>$}] (u1) at (-5,5) {};
 				\node[label=left:\tiny{$<p_1^5p_3>$}] (u2) at (-5,3.5) {};
 				\node[label=left:\tiny{$<p_2p_3>$}] (u3) at (2.3,5) {};
 				\node[label=right:\tiny{$<p_1p_2p_3>$}] (u4) at (-3,5) {};
 				\node[label=left:\tiny{$<p_3>$}] (u5) at (-4,6) {};
 				\node[label=above left:\tiny{$<p_1p_3>$}] (u6) at (-3.5,6) {};
 				\node[label=left:\tiny{$<p_2>$}] (u7) at (-4.3,2) {};
 				\node[label=below:\tiny{$<p_1p_2>$}] (u8) at (-3.8,2) {};
 				\node[label=above:\tiny{$<p_1^5>$}] (u9) at (0,7) {};
 				\node[label=left:\tiny{$<p_1^2p_2p_3>$}] (u10) at (0.5,3) {};
 				\node[label=right:\tiny{$<p_1^2p_3>$}] (u11) at (-3,6) {};
 				\node[label=right:\tiny{$<p_1^2p_2>$}] (u12) at (-3,2) {};
 				\node[label=right:\tiny{$<p_1^5p_3>$}] (u14) at (3,3.5) {};
 				\node[label=right:\tiny{$<p_1^5p_2>$}] (u15) at (3,5) {};
 				\node[label=right:\tiny{$<p_1^3p_3>$}] (u16) at (-2,5.8) {};
 				\node[label=right:\tiny{$<p_1^3p_2>$}] (u17) at (-1.5,2) {};
 				\node[label=below:\tiny{$<p_1^5>$}] (u18) at (0,.5) {};
 				\node[label=below:\tiny{$<p_1^3p_2p_3>$}] (u19) at (1,6) {};
 				\node[label=right:\tiny{$<p_1^4p_2>$}] (u20) at (-1,2.5) {};
 				\node[label=right:\tiny{$<p_1^4p_3>$}] (u21) at (-1,5.8) {};
 				\node[label=left:\tiny{$<p_1^4p_2p_3>$}] (u22) at (-2,6.5) {};
 				% Nodes in set V
 				\node[label=below:\tiny{$e_1$}] (v1) at (2,4) {};
 				\node[label=below:\tiny{$e_2$}] (v2) at (-3.5,5) {};
 				\node[label=right:\tiny{$e_3$}] (v3) at (1,4) {};
 				\node[label=below:\tiny{$e_4$}] (v4) at (1,.5) {};
 				\node[label=left:\tiny{$e_5$}] (v5) at (-4.5,5.5) {};
 				\node[label=left:\tiny{$e_6$}] (v6) at (-4,5.5) {};
 				\node[label=left:\tiny{$e_7$}] (v7) at (-3.5,5.5) {};
 				\node[label=above:\tiny{$e_8$}] (v8) at (-3,5.3) {};
 				\node[label=left:\tiny{$e_9$}] (v9) at (-4.5,3) {};
 				\node[label=left:\tiny{$e_{10}$}] (v10) at (-3.8,3) {};
 				\node[label=left:\tiny{$e_{11}$}] (v11) at (-3,3) {};
 				\node[label=right:\tiny{$e_{12}$}] (v12) at (-2.5,3) {};
 				\node[label=below:\tiny{$e_{13}$}] (v13) at (0.5,6) {};
 				\node[label=right:\tiny{$e_{14}$}] (v14) at (1,2) {};
 				\node[label=right:\tiny{$e_{15}$}] (v15) at (0,4.5) {};
 				\node[label=right:\tiny{$e_{16}$}] (v16) at (1,3) {};
 				\node[label=above:\tiny{$e_4$}] (v17) at (1,7) {};
 				\node[label=right:\tiny{$e_{17}$}] (v18) at (-1.8,3) {};
 				\node[label=above:\tiny{$e_{18}$}] (v19) at (-2,5.3) {};
 				\node[label=above:\tiny{$e_{19}$}] (v20) at (-2,7) {};
 				\node[label=below:\tiny{$e_{19}$}] (v21) at (-2,.5) {};
 				\node[label=below:\tiny{$e_{20}$}] (v22) at (-.5,6.5) {};
 				% Edges
 				
 				\draw (u15) -- (v1);
 				\draw (u14) -- (v1);
 				\draw (u3) -- (v1);
 				\draw (u1) -- (v2);
 				\draw (u2) -- (v2);
 				\draw (u4) -- (v2);
 				\draw (u2) -- (v3);
 				\draw (u10) -- (v3);
 				\draw (u15) -- (v17);
 				\draw (u19) -- (v17);
 				\draw (u14) -- (v4);
 				\draw (u1) -- (v5);
 				\draw (u5) -- (v5);
 				\draw (u1) -- (v6);
 				\draw (u6) -- (v6);
 				\draw (u1) -- (v7);
 				\draw (u11) -- (v7);
 				\draw (u1) -- (v8);
 				\draw (u16) -- (v8);
 				\draw (u7) -- (v9);
 				\draw (u2) -- (v9);
 				\draw (u8) -- (v10);
 				\draw (u2) -- (v10);
 				\draw (u12) -- (v11);
 				\draw (u2) -- (v11);
 				\draw (u17) -- (v12);
 				\draw (u2) -- (v12);
 				\draw (u19) -- (v13);
 				\draw (u9) -- (v13);
 				\draw (u10) -- (v14);
 				\draw (u18) -- (v14);
 				\draw (u4) -- (v15);
 				\draw (u9) -- (v15);
 				\draw (u3) -- (v16);
 				\draw (u2) -- (v18);
 				\draw (u20) -- (v18);
 				\draw (u1) -- (v19);
 				\draw (u22) -- (v20);
 				\draw (u21) -- (v19);
 				\draw (u22) -- (v22);
 				\draw (u9) -- (v22);
 				%\draw (u3) to[out=0, in=10](v3);
 				\draw (u15) to[out=150, in=90](v3);
 				\draw (u18) to[out=10, in=290](v16);
 				\draw (u1) to[out=90, in=180](v20);
 				\draw (u2) to[out=270, in=180](v21);
 				\draw[->,] (0,7) -- (2,7) [right] {};
 				\draw[->,] (0,.5) -- (2,.5) [right] {};
 				\draw[->,] (3,3) -- (3,4) [right] {};
 				\draw[->,] (-5,3) -- (-5,4) [right] {};
 				%\draw (u4) .. controls (-6,6)  .. (v2);
 				
 		\end{tikzpicture}}
 	\hspace{.2cm}
 		\subfloat[$\mathcal{I}(\tilde{\Gamma}_\mathcal{H}(\mathbb{Z}_{p_1^6p_2p_3}))$ on a torus]{\begin{tikzpicture}[scale=1.2, every node/.style={circle, fill=black, minimum size=4pt, inner sep=0pt}]
 				% Draw a circle with center at (0,0) and radius 3
 				\draw (-5,.5) rectangle (3,7); % Draws a rectangle from (0,0) to (4,2)
 				
 				% Nodes in set U
 				\node[label=left:\tiny{$<p_1^6p_2>$}] (u1) at (-5,5) {};
 				\node[label=left:\tiny{$<p_1^6p_3>$}] (u2) at (-5,3.5) {};
 				\node[label=left:\tiny{$<p_2p_3>$}] (u3) at (2.3,5) {};
 				\node[label=right:\tiny{$<p_1p_2p_3>$}] (u4) at (-3,5) {};
 				\node[label=left:\tiny{$<p_3>$}] (u5) at (-4,6) {};
 				\node[label=above left:\tiny{$<p_1p_3>$}] (u6) at (-3.5,6) {};
 				\node[label=left:\tiny{$<p_2>$}] (u7) at (-4.3,2) {};
 				\node[label=below:\tiny{$<p_1p_2>$}] (u8) at (-3.8,2) {};
 				\node[label=above:\tiny{$<p_1^5>$}] (u9) at (0,7) {};
 				\node[label=left:\tiny{$<p_1^2p_2p_3>$}] (u10) at (0.5,3) {};
 				\node[label=right:\tiny{$<p_1^2p_3>$}] (u11) at (-3,6) {};
 				\node[label=right:\tiny{$<p_1^2p_2>$}] (u12) at (-3,2) {};
 				\node[label=right:\tiny{$<p_1^6p_3>$}] (u14) at (3,3.5) {};
 				\node[label=right:\tiny{$<p_1^6p_2>$}] (u15) at (3,5) {};
 				\node[label=right:\tiny{$<p_1^3p_3>$}] (u16) at (-2,5.8) {};
 				\node[label=right:\tiny{$<p_1^3p_2>$}] (u17) at (-1.5,2) {};
 				\node[label=below:\tiny{$<p_1^5>$}] (u18) at (0,.5) {};
 				\node[label=below:\tiny{$<p_1^3p_2p_3>$}] (u19) at (1,6) {};
 				\node[label=right:\tiny{$<p_1^4p_2>$}] (u20) at (-1,2.5) {};
 				\node[label=right:\tiny{$<p_1^4p_3>$}] (u21) at (-1,5.8) {};
 				\node[label=left:\tiny{$<p_1^4p_2p_3>$}] (u22) at (-2,6.5) {};
 				\node[label=right:\tiny{$<p_1^5p_3>$}] (u23) at (-1,5.5) {};
 				\node[label=right:\tiny{$<p_1^5p_2>$}] (u24) at (-1.5,3.3) {};
 				\node[label=left:\tiny{$<p_1^5p_2p_3>$}] (u25) at (-1.5,1) {};
 				
 				% Nodes in set V
 				\node[label=below:\tiny{$e_1$}] (v1) at (2,4) {};
 				\node[label=below:\tiny{$e_2$}] (v2) at (-3.5,5) {};
 				\node[label=right:\tiny{$e_3$}] (v3) at (1,4) {};
 				\node[label=below:\tiny{$e_4$}] (v4) at (1,.5) {};
 				\node[label=left:\tiny{$e_5$}] (v5) at (-4.5,5.5) {};
 				\node[label=left:\tiny{$e_6$}] (v6) at (-4,5.5) {};
 				\node[label=left:\tiny{$e_7$}] (v7) at (-3.5,5.5) {};
 				\node[label=above:\tiny{$e_8$}] (v8) at (-3,5.3) {};
 				\node[label=left:\tiny{$e_9$}] (v9) at (-4.5,3) {};
 				\node[label=left:\tiny{$e_{10}$}] (v10) at (-3.8,3) {};
 				\node[label=left:\tiny{$e_{11}$}] (v11) at (-3,3) {};
 				\node[label=right:\tiny{$e_{12}$}] (v12) at (-2.5,3) {};
 				\node[label=below:\tiny{$e_{13}$}] (v13) at (0.5,6) {};
 				\node[label=right:\tiny{$e_{14}$}] (v14) at (1,2) {};
 				\node[label=right:\tiny{$e_{15}$}] (v15) at (0,4.5) {};
 				\node[label=right:\tiny{$e_{16}$}] (v16) at (1,3) {};
 				\node[label=above:\tiny{$e_4$}] (v17) at (1,7) {};
 				\node[label=right:\tiny{$e_{17}$}] (v18) at (-1.8,3) {};
 				\node[label=above:\tiny{$e_{18}$}] (v19) at (-2,5.3) {};
 				\node[label=above:\tiny{$e_{19}$}] (v20) at (-2,7) {};
 				\node[label=below:\tiny{$e_{19}$}] (v21) at (-2,.5) {};
 				\node[label=below:\tiny{$e_{20}$}] (v22) at (-.5,6.5) {};
 				\node[label=above:\tiny{$e_{21}$}] (v23) at (-1.3,5.2) {};
 				\node[label=above:\tiny{$e_{22}$}] (v24) at (-2,3.3) {};
 				\node[label=below:\tiny{$e_{23}$}] (v25) at (-1,.5) {};
 				\node[label=above:\tiny{$e_{23}$}] (v26) at (-1,7) {};
 				\node[label=above:\tiny{$e_{24}$}] (v27) at (-1,1) {};
 				% Edges
 				
 				\draw (u15) -- (v1);
 				\draw (u14) -- (v1);
 				\draw (u3) -- (v1);
 				\draw (u1) -- (v2);
 				\draw (u2) -- (v2);
 				\draw (u4) -- (v2);
 				\draw (u2) -- (v3);
 				\draw (u10) -- (v3);
 				\draw (u15) -- (v17);
 				\draw (u19) -- (v17);
 				\draw (u14) -- (v4);
 				\draw (u1) -- (v5);
 				\draw (u5) -- (v5);
 				\draw (u1) -- (v6);
 				\draw (u6) -- (v6);
 				\draw (u1) -- (v7);
 				\draw (u11) -- (v7);
 				\draw (u1) -- (v8);
 				\draw (u16) -- (v8);
 				\draw (u7) -- (v9);
 				\draw (u2) -- (v9);
 				\draw (u8) -- (v10);
 				\draw (u2) -- (v10);
 				\draw (u12) -- (v11);
 				\draw (u2) -- (v11);
 				\draw (u17) -- (v12);
 				\draw (u2) -- (v12);
 				\draw (u19) -- (v13);
 				\draw (u9) -- (v13);
 				\draw (u10) -- (v14);
 				\draw (u18) -- (v14);
 				\draw (u4) -- (v15);
 				\draw (u9) -- (v15);
 				\draw (u3) -- (v16);
 				\draw (u2) -- (v18);
 				\draw (u20) -- (v18);
 				\draw (u1) -- (v19);
 				\draw (u22) -- (v20);
 				\draw (u21) -- (v19);
 				\draw (u22) -- (v22);
 				\draw (u9) -- (v22);
 				\draw (u1) -- (v23);
 				\draw (u23) -- (v23);
 				\draw (u2) -- (v24);
 				\draw (u24) -- (v24);
 				\draw (u25) -- (v25);
 				\draw (u15)  -- (v26);
 				\draw (u25) -- (v27);
 				\draw (u18) -- (v27);
 				%\draw (u3) to[out=0, in=10](v3);
 				\draw (u15) to[out=150, in=90](v3);
 				\draw (u18) to[out=10, in=290](v16);
 				\draw (u1) to[out=90, in=180](v20);
 				\draw (u2) to[out=270, in=180](v21);
 				\draw (u2) to[out=275, in=180](v25);
 				
 					\draw[->,] (0,7) -- (2,7) [right] {};
 				\draw[->,] (0,.5) -- (2,.5) [right] {};
 				\draw[->,] (3,3) -- (3,4) [right] {};
 				\draw[->,] (-5,3) -- (-5,4) [right] {};
 				%\draw (u4) .. controls (-6,6)  .. (v2);
 				
 		\end{tikzpicture}}
 
 	\caption{}
 	\label{torusembed3}
 \end{figure}\\
\noindent \textbf{Subsubcase 2.2.3.}  If $n$ contain atleast two prime divisors with power greater than  or equal to 2, then  $\mathcal{I}(\tilde{\Gamma}_\mathcal{H}(\mathbb{Z}_n))$ contains a subgraph isomorphic to $\mathcal{I}(\tilde{\Gamma}_\mathcal{H}(\mathbb{Z}_{p_1^2p_2^2p_3}))$ and Figure \ref{torusembed4}(a) depicts that $\mathcal{I}(\tilde{\Gamma}_\mathcal{H}(\mathbb{Z}_{p_1^2p_2^2p_3}))$  is not embeddable on a torus. Hence, in this case,  $\mathcal{I}(\tilde{\Gamma}_\mathcal{H}(\mathbb{Z}_n))$ is also not embeddable on a torus and therefore, $g(\tilde{\Gamma}_\mathcal{H}(\mathbb{Z}_n)) \neq  1$.\\
	\noindent\textbf{Case 3.} Suppose $\omega(n) > 3$.\\
Observe that $\mathcal{I}(\tilde{\Gamma}_\mathcal{H}(\mathbb{Z}_n))$ contains a subgraph isomorphic to $\mathcal{I}(\tilde{\Gamma}_\mathcal{H}(\mathbb{Z}_{p_1p_2p_3p_4}))$ and Figure \ref{torusembed4}(b) depicts that  $\mathcal{I}(\tilde{\Gamma}_\mathcal{H}(\mathbb{Z}_{p_1p_2p_3p_4}))$ is not embeddable on a torus. Hence, in this case,  $\mathcal{I}(\tilde{\Gamma}_\mathcal{H}(\mathbb{Z}_n))$ is also not embeddable on a torus and therefore, $g(\tilde{\Gamma}_\mathcal{H}(\mathbb{Z}_n)) \neq 1$.
 \begin{figure}[htbp!]
	\centering
	\subfloat[$\mathcal{I}(\tilde{\Gamma}_\mathcal{H}(\mathbb{Z}_{p_1^2p_2^2p_3}))$ on torus]{\begin{tikzpicture}[scale=1.2, every node/.style={circle, fill=black, minimum size=4pt, inner sep=0pt}]
				% Draw a circle with center at (0,0) and radius 3
				\draw (-5,.5) rectangle (3,7); % Draws a rectangle from (0,0) to (4,2)
				
				% Nodes in set U
				\node[label=above:\tiny{$<p_1^2p_2^2>$}] (u1) at (-1,7) {};
				\node[label=right:\tiny{$<p_1^2p_2p_3>$}] (u2) at (-3,3.5) {};
				\node[label=right:\tiny{$<p_1p_2^2p_3>$}] (u3) at (3,5) {};
				\node[label=left:\tiny{$<p_2^2p_3>$}] (u4) at (-2.5,5) {};
				\node[label=left:\tiny{$<p_1^2p_3>$}] (u5) at (1,5.9) {};
				\node[label=right:\tiny{$<p_1p_2p_3>$}] (u6) at (0,3) {};
				\node[label=right:\tiny{$<p_1p_3>$}] (u7) at (.2,2.1) {};
				\node[label=right:\tiny{$<p_2p_3>$}] (u8) at (.2,1) {};
				\node[label=below:\tiny{$<p_3>$}] (u9) at (-2,6.5) {};
				\node[label=right:\tiny{$<p_1p_2^2>$}] (u11) at (1.4,4) {};
				\node[label=above:\tiny{$<p_1^2p_2>$}] (u12) at (-4.5,6) {};
				\node[label=above:\tiny{$<p_1^2>$}] (u13) at (-3.5,6.2) {};
				\node[label=below:\tiny{$<p_1^2p_2^2>$}] (u14) at (-1,.5) {};
				\node[label=left:\tiny{$<p_1p_2^2p_3>$}] (u15) at (-5,5) {};
				\node[label=right:\tiny{$<p_2^2>$}] (u16) at (-2,2.8) {};
				
				% Nodes in set V
				\node[label=left:\tiny{$e_1$}] (v1) at (-4,3.5) {};
				\node[label=left:\tiny{$e_2$}] (v2) at (-3,4.5) {};
				\node[label=above:\tiny{$e_3$}] (v3) at (1,6.4) {};
				\node[label=below:\tiny{$e_4$}] (v4) at (-1,6.5) {};
				\node[label=below right:\tiny{$e_5$}] (v5) at (-0.5,3) {};
				\node[label=below right:\tiny{$e_6$}] (v6) at (-0.5,2) {};
				\node[label=below:\tiny{$e_7$}] (v7) at (-0.5,1) {};
				\node[label=right:\tiny{$e_8$}] (v8) at (-1.5,6.5) {};
				\node[label=below:\tiny{$e_9$}] (v9) at (0,4.5) {};
				\node[label=left:\tiny{$e_{10}$}] (v10) at (-3,1) {};
				\node[label=left:\tiny{$e_{11}$}] (v11) at (-4.3,5.5) {};
				\node[label=right:\tiny{$e_{12}$}] (v12) at (-4,5.5) {};
				\node[label=right:\tiny{$e_{13}$}] (v13) at (0,5) {};
				\node[label=right:\tiny{$e_{14}$}] (v14) at (2,5) {};
				\node[label=left:\tiny{$e_{15}$}] (v15) at (0,5.5) {};

				% Edges
				
				\draw (u1) to[out=180, in=90] (v1);
				\draw (u2) -- (v1);
				\draw (u15) -- (v1);
				\draw (u14) -- (v2);
				\draw (u2) -- (v2);
				\draw (u4) -- (v2);
				\draw (u1) -- (v3);
				\draw (u3) -- (v3);
				\draw (u5) -- (v3);
				\draw (u1) -- (v4);
				\draw (u4) -- (v4);
				\draw (u5) -- (v4);
				\draw (u6) -- (v5);
				\draw (u14) -- (v5);
				\draw (u7) -- (v6);
				\draw (u14) -- (v6);
				\draw (u8) -- (v7);
				\draw (u14) -- (v7);
				\draw (u1) -- (v8);
				\draw (u9) -- (v8);
				\draw (u2) -- (v9);
				\draw (u11) -- (v9);
				\draw (u2) -- (v10);
				\draw (u16) -- (v10);
				\draw (u15) -- (v11);
				\draw (u12) -- (v11);
				\draw (u15) -- (v12);
				\draw (u13) -- (v12);
				\draw (u11) -- (v13);
				\draw (u4) -- (v13);
				\draw (u5) -- (v14);
				\draw (u11) -- (v14);
				\draw (u5) -- (v15);
				\draw (u16) -- (v15);
				%\draw (u4) .. controls (-6,6)  .. (v2);
				\draw[->,] (-1,7) -- (0,7) [right] {};
				\draw[->,] (-1,.5) -- (0,.5) [right] {};
				\draw[->,] (3,3) -- (3,4) [right] {};
				\draw[->,] (-5,3) -- (-5,4) [right] {};

		\end{tikzpicture}}
	\hspace{.2cm}
	\subfloat[$\mathcal{I}(\tilde{\Gamma}_\mathcal{H}(\mathbb{Z}_{p_1p_2p_3p_4}))$ on torus]{\begin{tikzpicture}[scale=1.2, every node/.style={circle, fill=black, minimum size=4pt, inner sep=0pt}]
				% Draw a circle with center at (0,0) and radius 3
				\draw (-3,-2) rectangle (5,4); % Draws a rectangle from (0,0) to (4,2)
				
				% Nodes inside the circle
				\node[label=above:\tiny{${<p_1p_2p_3>}$}] (u1) at (2,4) {};
				\node[label=below right:\tiny{${<p_1p_2p_4>}$}] (u2) at (1,3) {};
				\node[label=right:\tiny{${<p_1p_3p_4>}$}] (u3) at (-1,2.5) {};
				\node[label=right:\tiny{${<p_2p_3p_4>}$}] (u4) at (-1,0.4) {};
				\node[label=above:\tiny{${<p_3p_4>}$}] (u5) at (3,4) {};
				\node[label=left:\tiny{${<p_2p_4>}$}] (u6) at (-.8,3) {};
				\node[label=below:\tiny{${<p_1p_2p_3>}$}] (u7) at (2,-2) {};
				\node[label=left:\tiny{${<p_1p_4>}$}] (u8) at (-1.8,-1.5) {};
				\node[label=right:\tiny{${<p_4>}$}] (u9) at (2.8,-1) {};
				\node[label=left:\tiny{${<p_2p_3>}$}] (u10) at (-1.8,3.5) {};
				\node[label=right:\tiny{${<p_1p_3>}$}] (u11) at (3,0.2) {};
				\node[label=below:\tiny{${<p_3>}$}] (u12) at (2.5,1) {};
				\node[label=right:\tiny{${<p_1p_2>}$}] (u13) at (-2.5,1) {};
				\node[label=right:\tiny{${<p_2>}$}] (u14) at (0,1.5) {};
				\node[label=left:\tiny{${<p_3>}$}] (u15) at (-.5,1) {};
				\node[label=below:\tiny{${<p_3p_4>}$}] (u16) at (3,-2) {};

				\node[label=below:\tiny{$e_1$}] (v1) at (2,1) {};
				\node[label=right:\tiny{$e_2$}] (v2) at (3,3) {};
				\node[label=below:\tiny{$e_3$}] (v3) at (-.2,3) {};
				\node[label=below:\tiny{$e_4$}] (v4) at (0,-.8) {};
				\node[label=right:\tiny{$e_5$}] (v5) at (2.3,-.8) {};
				\node[label=left:\tiny{$e_6$}] (v6) at (-3,2.5) {};
				\node[label=right:\tiny{$e_6$}] (v7) at (5,2.5) {};
				\node[label=right:\tiny{$e_7$}] (v8) at (3.5,2) {};
				\node[label=right:\tiny{$e_8$}] (v9) at (2.5,2) {};
				\node[label=left:\tiny{$e_9$}] (v10) at (-2,2) {};
				\node[label=left:\tiny{$e_{10}$}] (v11) at (-1,1.5) {};
				\node[label=right:\tiny{$e_{11}$}] (v12) at (0.5,1) {};
				\node[label=left:\tiny{$e_{12}$}] (v13) at (-3,0.3) {};
				\node[label=below:\tiny{$e_{13}$}] (v14) at (1,-2) {};
				\node[label=below:\tiny{$e_{14}$}] (v15) at (0,-2) {};
				\node[label=above:\tiny{$e_{14}$}] (v16) at (0,4) {};
				\node[label=above:\tiny{$e_{13}$}] (v17) at (1,4) {};
				\node[label=right:\tiny{$e_{12}$}] (v18) at (5,0.3) {};

				%\draw (u4) -- (u5);
				
				%\draw (u14) to[out=0, in=270](u3);
				
				%\draw[thick, blue] (u2) to[out=60, in=180](u4);
				
				\draw (u1) -- (v1);
				\draw (u2) -- (v1);
				\draw (u3) -- (v1);
				\draw (u4) -- (v1);
				\draw (u1) -- (v2);
				\draw (u2) -- (v2);
				\draw (u5) -- (v2);
				\draw (u1) -- (v3);
				\draw (u3) -- (v3);
				\draw (u6) -- (v3);
				\draw (u7) -- (v4);
				\draw (u4) -- (v4);
				\draw (u8) -- (v4);
				\draw (u9) -- (v5);
				\draw (u7) -- (v5);
				\draw (u2) -- (v7);
				\draw (u3) -- (v6);
				\draw (u10) -- (v6);
				\draw (u2) -- (v8);
				\draw (u11) -- (v8);
				\draw (u2) -- (v9);
				\draw (u12) -- (v9);
				\draw (u3) -- (v10);
				\draw (u4) -- (v10);
				\draw (u13) -- (v10);
				\draw (u3) -- (v11);
				\draw (u14) -- (v11);
				\draw (u4) -- (v12);
				\draw (u15) -- (v12);
				\draw (u13) -- (v13);
				\draw (u11) -- (v14);
				\draw (u6) -- (v17);
				\draw (u8) -- (v15);
				\draw (u10) -- (v16);
				\draw (u16) -- (v18);

				%	\draw (u5) to[out=90, in=330](v7);

				\draw (u4) to[out=330, in=250](v8);
				\draw[->,] (-2,4) -- (-1,4) [right] {};
				\draw[->,] (-2,-2) -- (-1,-2) [right] {};
				\draw[->,] (5,0) -- (5,1) [right] {};
				\draw[->,] (-3,0) -- (-3,1) [right] {};
		\end{tikzpicture}}
	\caption{}
	\label{torusembed4}
\end{figure}
		\end{proof}
		\begin{corollary}
			 $\tilde{\Gamma}_\mathcal{H}(\mathbb{Z}_n)$ is toroidal if and only if $n=p_1^{\alpha_1}p_2^{\alpha}$ with $\alpha=1,2$ or $n=p_1^{\beta}p_2^3$ with $3 \leq \beta \leq 6$ or $n=p_1^{\gamma}p_2p_3$ with $1 \leq \gamma \leq 5$, where $p_1,p_2,p_3$ are distinct primes and $\alpha_1$ is a positive integer.
		\end{corollary}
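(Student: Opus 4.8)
The plan is to reduce the statement to the two genus computations already in hand. By definition $\tilde{\Gamma}_\mathcal{H}(\mathbb{Z}_n)$ is toroidal precisely when its incidence graph embeds on the torus, and by Theorem \ref{incidencegraph} we have $g(\tilde{\Gamma}_\mathcal{H}(\mathbb{Z}_n)) = g(\mathcal{I}(\tilde{\Gamma}_\mathcal{H}(\mathbb{Z}_n)))$; hence being toroidal is equivalent to $g(\tilde{\Gamma}_\mathcal{H}(\mathbb{Z}_n)) \le 1$. I would therefore split the condition into the two mutually exclusive cases $g = 0$ and $g = 1$, so that $\tilde{\Gamma}_\mathcal{H}(\mathbb{Z}_n)$ is toroidal if and only if it is planar or of orientable genus one.

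The two cases are each already characterized. For $g = 0$, Theorem \ref{planarity} gives planarity exactly when $n \in \{\,p_1^{\alpha_1}p_2,\ p_1^{\alpha_1}p_2^2,\ p_1p_2p_3,\ p_1^2p_2p_3\,\}$. For $g = 1$, the preceding genus theorem gives genus one exactly when $n = p_1^{\alpha}p_2^3$ with $3 \le \alpha \le 6$, or $n = p_1^4p_2^4$, or $n = p_1^{\beta}p_2p_3$ with $3 \le \beta \le 5$. Taking the union of these two families and grouping by the number $\omega(n)$ of distinct prime divisors then yields the claimed list. Concretely, for $\omega(n) = 2$ the planar values $p_1^{\alpha_1}p_2$ and $p_1^{\alpha_1}p_2^2$ consolidate into $n = p_1^{\alpha_1}p_2^{\alpha}$ with $\alpha \in \{1,2\}$, while the genus-one two-prime values supply the exponent-three family; and for $\omega(n) = 3$ the planar exponents $\gamma = 1,2$ merge with the genus-one exponents $\gamma = 3,4,5$ to give the single range $n = p_1^{\gamma}p_2p_3$ with $1 \le \gamma \le 5$.

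For the reverse inclusion I would rely on the fact, recorded in the introduction, that $g(H) \le g(G)$ whenever $H$ is a subgraph of $G$: every $n$ not on the list contains, in its incidence graph, one of the forbidden configurations identified in the genus theorem (for instance the patterns arising from $\omega(n) \ge 4$, or from two-prime exponent profiles heavier than those listed), forcing $g \ge 2$ and hence non-toroidality. The work here is bookkeeping rather than new geometry: the only point demanding care is to verify that the union of the two explicitly listed families is exactly the stated set, paying attention to the boundary exponent profiles — in particular the two-prime case $p_1^4p_2^4$ produced by the genus-one theorem — so that no admissible $n$ is lost and no inadmissible $n$ is admitted.
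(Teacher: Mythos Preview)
Your approach is exactly the (implicit) one the paper intends: the corollary is stated without proof because it is meant to be the union of the planar list from Theorem~\ref{planarity} and the genus-one list from the immediately preceding theorem, together with the identification of ``toroidal'' with $g\le 1$ via Theorem~\ref{incidencegraph}. Your decomposition into the cases $g=0$ and $g=1$ and the subsequent merging by $\omega(n)$ is precisely this.

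You are also right to flag $n=p_1^4p_2^4$. The preceding theorem explicitly lists $p_1^4p_2^4$ among the genus-one cases, so it is toroidal; yet the corollary as printed omits it (it fits none of the three displayed families, since $\alpha\in\{1,2\}$ and the second family fixes the smaller exponent at~$3$). This is an oversight in the statement of the corollary, not a defect in your argument: the union you compute is the correct toroidal set, and it strictly contains the list given.
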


		\noindent We have the following result about non-orientable genus of a hypergraph to characterize  $\tilde{\Gamma}_\mathcal{H}(\mathbb{Z}_n)$ whose non-orientable genus is 1.
		
		\begin{theorem}\label{incidencegraph2}\cite{walsh1975hypermaps}
			For any hypergraph $\mathcal{H}$, $\tilde{g}(\mathcal{H})=\tilde{g}(\mathcal{I}(\mathcal{H}))$.
		\end{theorem}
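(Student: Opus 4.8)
The plan is to establish the two inequalities $\tilde{g}(\mathcal{I}(\mathcal{H})) \le \tilde{g}(\mathcal{H})$ and $\tilde{g}(\mathcal{H}) \le \tilde{g}(\mathcal{I}(\mathcal{H}))$ by transferring explicit embeddings back and forth between a hypergraph and its incidence graph, working on the same non-orientable surface $N_k$ throughout. The only notion one needs is a topological embedding of $\mathcal{H}$: each vertex is a distinct point, and each hyperedge $e$ is realised as a closed disk $R_e$ whose boundary passes through exactly the vertices of $e$, with the disks having pairwise disjoint interiors that contain no vertices, so that $R_e \cap R_{e'}$ consists only of the vertices common to $e$ and $e'$.

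For the first inequality I would start from a minimum non-orientable genus embedding of $\mathcal{H}$ on $N_k$ with $k = \tilde{g}(\mathcal{H})$. Inside each region $R_e$ I place the corresponding edge-vertex $b_e$ of $\mathcal{I}(\mathcal{H})$ and join $b_e$ by an arc to each vertex $v \in e$; since $R_e$ is a disk and its incident vertices lie on its boundary, these arcs can be routed through the interior of $R_e$ without crossings. Performing this in every region simultaneously yields an embedding of the bipartite incidence graph on $N_k$, giving $\tilde{g}(\mathcal{I}(\mathcal{H})) \le \tilde{g}(\mathcal{H})$.

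The reverse inequality is the substantive step. Starting from a minimum non-orientable genus embedding of $\mathcal{I}(\mathcal{H})$ on $N_k$ with $k = \tilde{g}(\mathcal{I}(\mathcal{H}))$, for each edge-vertex $b_e$ I would take the closed star consisting of $b_e$ together with the arcs joining it to its neighbouring vertices, and replace it by a thin regular neighbourhood $R_e$. The key topological fact is that a regular neighbourhood of a tree embedded in a surface is always a disk: it deformation retracts onto a contractible set, hence is simply connected, and a simply connected compact surface-with-boundary cannot be a M\"obius band, so it must be a disk. Crucially this is insensitive to orientability, so the argument is identical to the orientable case of Theorem \ref{incidencegraph}. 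Deleting $b_e$ and the arcs from the interior and declaring $R_e$ the realisation of $e$ places each vertex of $e$ on $\partial R_e$. Choosing the neighbourhoods thin enough forces distinct $R_e$ to approach a shared vertex only in the directions prescribed by the rotation of the incidence-graph edges around that vertex, so after shrinking we may assume $R_e \cap R_{e'}$ is exactly the set of common vertices. This produces a valid embedding of $\mathcal{H}$ on $N_k$, yielding $\tilde{g}(\mathcal{H}) \le \tilde{g}(\mathcal{I}(\mathcal{H}))$ and hence equality.

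The main obstacle I anticipate is the bookkeeping in the reverse direction: verifying that the fattened stars are genuine disks and, more delicately, that they can be arranged to intersect only at the shared vertices rather than along spurious arcs, all without altering the underlying surface. Once the regular-neighbourhood picture is set up carefully at each shared vertex, the construction is purely local and the genus is preserved; the non-orientable statement then follows from the very same reasoning that proves the orientable analogue, with $N_k$ in place of $S_k$.
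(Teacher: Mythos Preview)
The paper does not give its own proof of this statement; it is simply quoted from Walsh \cite{walsh1975hypermaps} as a known background result, with no argument supplied. Your proposal is essentially Walsh's original construction: pass between a hypergraph embedding and an incidence-graph embedding on the same surface by placing an edge-vertex inside each hyperedge disk in one direction, and by thickening the star of each edge-vertex into a disk in the other. Your key observation---that the regular neighbourhood of a tree in any surface is a disk regardless of orientability---is precisely what makes the non-orientable statement a verbatim repetition of the orientable Theorem~\ref{incidencegraph}. The bookkeeping you flag about arranging the thickened stars to meet only at shared vertices is a genuine detail but is handled, as you indicate, by taking the neighbourhoods thin enough. So your argument is correct and is the standard one from the cited reference; there is simply no proof in the present paper to compare it against.
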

		\begin{theorem}
			The non-orientable genus, $\tilde{g}(\tilde{\Gamma}_\mathcal{H}(\mathbb{Z}_n))$, of $\tilde{\Gamma}_\mathcal{H}(\mathbb{Z}_n)$ is equal to 1 if and only if  $n=p_1^{\alpha_1}p_2^3$ with $\alpha_1=3,4$ or $n=p_1^{\alpha_1}p_2p_3$ with $\alpha_1=3,4$, where $p_1,p_2$ and $p_3$ are distinct primes.
			
		\end{theorem}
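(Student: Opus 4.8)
The plan is to pass to the incidence graph via Theorem~\ref{incidencegraph2}, which gives $\tilde{g}(\tilde{\Gamma}_\mathcal{H}(\mathbb{Z}_n)) = \tilde{g}(\mathcal{I}(\tilde{\Gamma}_\mathcal{H}(\mathbb{Z}_n)))$, so the claim reduces to deciding when this bipartite incidence graph is non-planar yet embeddable on the projective plane $N_1$. Recall that $\tilde{g}(G)=1$ means exactly that $G$ is \emph{not} planar but \emph{does} embed in $N_1$. Since Theorem~\ref{planarity} already lists every planar case, the four families in the statement are automatically non-planar (none of them is $p_1^{\alpha_1}p_2$, $p_1^{\alpha_1}p_2^2$, $p_1p_2p_3$, or $p_1^2p_2p_3$), so for them $\tilde{g}\geq 1$; the content of the theorem is to pin the value down to exactly $1$.

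For the forward (``if'') direction I would exhibit, for each of the four graphs $n=p_1^3p_2^3$, $p_1^4p_2^3$, $p_1^3p_2p_3$, $p_1^4p_2p_3$, an explicit embedding of the incidence graph on $N_1$, drawn in the usual disk model with antipodal boundary points identified. The vertex set and the adjacencies (which subgroup lies in which maximal hyperedge) are completely determined by Theorem~\ref{vertexset} and Lemma~\ref{lemma2}, so each of these is a finite graph that one draws once; together with the non-planarity noted above this establishes $\tilde{g}=1$ on these families, and the value transfers back to the hypergraph by Theorem~\ref{incidencegraph2}.

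For the converse I would organize the remaining non-planar $n$ by $\omega(n)$ and, in each leftover case, locate a fixed obstruction subgraph whose incidence graph fails to embed on $N_1$; since $\tilde{g}$ is monotone under taking subgraphs, this forces $\tilde{g}(\tilde{\Gamma}_\mathcal{H}(\mathbb{Z}_n))\geq 2$. The boundary cases to rule out are: for $\omega(n)=2$, the graphs $\mathcal{I}(\tilde{\Gamma}_\mathcal{H}(\mathbb{Z}_{p_1^5p_2^3}))$ and $\mathcal{I}(\tilde{\Gamma}_\mathcal{H}(\mathbb{Z}_{p_1^4p_2^4}))$; for $\omega(n)=3$, the graphs $\mathcal{I}(\tilde{\Gamma}_\mathcal{H}(\mathbb{Z}_{p_1^5p_2p_3}))$ and $\mathcal{I}(\tilde{\Gamma}_\mathcal{H}(\mathbb{Z}_{p_1^2p_2^2p_3}))$; and for $\omega(n)\geq 4$, the graph $\mathcal{I}(\tilde{\Gamma}_\mathcal{H}(\mathbb{Z}_{p_1p_2p_3p_4}))$. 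Every larger admissible $n$ contains one of these as a subgraph, since the relevant subgroups and incidences persist as further prime factors are added (Lemma~\ref{lemma2}), so handling these finitely many graphs suffices. To certify non-projectivity I would invoke the standard criterion that a graph containing two \emph{vertex-disjoint} subdivisions of $K_5$ or $K_{3,3}$ is not projective-planar, and in each obstruction graph I would select two disjoint families of subgroups, each inducing a $K_{3,3}$-subdivision through its hyperedge-vertices, with disjointness arranged by letting the two families involve disjoint sets of prime generators.

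The main obstacle is this converse step: unlike the orientable case one cannot simply invoke an Euler-characteristic edge count, because several of these graphs (for instance $\mathcal{I}(\tilde{\Gamma}_\mathcal{H}(\mathbb{Z}_{p_1^5p_2p_3}))$) are toroidal and so comfortably satisfy the bipartite bound $|E|\leq 2|V|-2$ for $N_1$, hence are not excluded by counting alone. The delicate combinatorial work is therefore to genuinely \emph{exhibit} two vertex-disjoint $K_{3,3}$-subdivisions inside each obstruction incidence graph, which amounts to choosing, via Lemma~\ref{lemma2}, two disjoint triples of subgroups with pairwise trivial intersection together with disjoint supporting hyperedges; verifying that the two subdivisions share no vertex while each retains its six branch vertices and nine internally disjoint connecting paths is where the care lies.
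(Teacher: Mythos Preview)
Your overall architecture matches the paper's exactly: reduce to the incidence graph via Theorem~\ref{incidencegraph2}, exhibit projective embeddings for the positive cases (the paper draws only the two maximal ones, $p_1^4p_2^3$ and $p_1^4p_2p_3$, and lets the smaller cases inherit them as subgraphs), and rule out the same five minimal obstructions $p_1^5p_2^3$, $p_1^4p_2^4$, $p_1^5p_2p_3$, $p_1^2p_2^2p_3$, $p_1p_2p_3p_4$ for the converse.

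The genuine gap is in your proposed certificate of non-projectivity via two vertex-disjoint Kuratowski subdivisions. For $\omega(n)=2$ the hypergraph is $2$-uniform: by Lemma~\ref{lemma2} every hyperedge is a single pair $\{\langle p_1^{\alpha_1}p_2^{\,j}\rangle,\langle p_1^{\,i}p_2^{\alpha_2}\rangle\}$, so $\mathcal{I}(\tilde{\Gamma}_\mathcal{H}(\mathbb{Z}_{p_1^{\alpha_1}p_2^{\alpha_2}}))$ is nothing but a subdivision of $K_{\alpha_2,\alpha_1}$. Thus your two $\omega(n)=2$ obstructions are homeomorphic to $K_{3,5}$ and $K_{4,4}$. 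Each has only $8$ vertices of degree $\geq 3$, so two \emph{vertex-disjoint} $K_{3,3}$-subdivisions (which need $12$ branch vertices) are impossible, and bipartiteness rules out any $K_5$-subdivision. Your ``disjoint sets of prime generators'' device cannot even be set up with only two primes available. In fact $K_{3,5}$ is itself one of the irreducible obstructions to projective-planarity, so no decomposition argument of this kind can succeed there.

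For these two cases you need a different tool --- the cleanest fix is simply to invoke Ringel's formula $\tilde g(K_{m,n})=\lceil (m-2)(n-2)/2\rceil$ (valid for $m,n\geq 3$), which gives $\tilde g(K_{3,5})=\tilde g(K_{4,4})=2$ directly and transfers to the subdivisions. The paper, for its part, handles all five obstructions by exhibiting a drawing on $N_1$ with an unavoidable crossing; whichever route you take, the disjoint-Kuratowski criterion alone will not close the $\omega(n)=2$ cases.
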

		
		\begin{proof}
				Suppose that  $n=p_1^{\alpha_1}p_2^{\alpha_2}\ldots p_k^{\alpha_k}$, where  $k \geq 2$, $p_1,p_2, \ldots, p_k$ are distinct primes,  $\alpha_1, \alpha_2, \ldots, \alpha_k$ are non-negative integers. 
		Consider the following cases:\\
			\textbf{Case 1.} Suppose that $\omega(n) =2$.\\
			\textbf{Subcase 1.1.} If $n=p_1^{\alpha_1}p_2$ or $n=p_1^{\alpha_1}p_2^2$, then, by Theorem \ref{planarity}, $\tilde{\Gamma}_\mathcal{H}(\mathbb{Z}_n)$ is planar and hence $\tilde{g}(\tilde{\Gamma}_\mathcal{H}(\mathbb{Z}_n)) \neq 1$. \\
			\textbf{Subcase 1.2.} If $n \neq p_1^{\alpha_1}p_2$ and  $n \neq p_1^{\alpha_1}p_2^2$,  then, by Theorem \ref{planarity}, $\tilde{g}(\tilde{\Gamma}_\mathcal{H}(\mathbb{Z}_n))> 0$. In this subcase, we have the only following subsubcases:\\
			\textbf{Subsubcase 1.2.(a)} If $n$ divides $p_1^4p_2^3$, then $\mathcal{I}(\tilde{\Gamma}_\mathcal{H}(\mathbb{Z}_n))$ is isomorphic to a subgraph of $\mathcal{I}(\tilde{\Gamma}_\mathcal{H}(\mathbb{Z}_{p_1^4p_2^3}))$ and Figure \ref{proj1}(a) depicts an embedding of $\mathcal{I}(\tilde{\Gamma}_\mathcal{H}(\mathbb{Z}_{p_1^4p_2^3}))$ on a projective plane. Hence, in this case,  $\tilde{g}(\tilde{\Gamma}_\mathcal{H}(\mathbb{Z}_n)) = 1$.\\
			\textbf{Subsubcase 1.2.(b)} If $n$ is divisible by $p_1^5p_2^3$, then $\mathcal{I}(\tilde{\Gamma}_\mathcal{H}(\mathbb{Z}_n))$ contains a subgraph isomorphic to $\mathcal{I}(\tilde{\Gamma}_\mathcal{H}(\mathbb{Z}_{p_1^5p_2^3}))$ and  Figure \ref{proj1}(b) depicts that  $\mathcal{I}(\tilde{\Gamma}_\mathcal{H}(\mathbb{Z}_{p_1^5p_2^3}))$ is not embeddable on a projective plane. Hence, in this case, $\tilde{g}(\tilde{\Gamma}_\mathcal{H}(\mathbb{Z}_n)) \neq  1$.\\
			\textbf{Subsubcase 1.2.(c)} If $n$ is divisible by  $p_1^4p_2^4$, then $\mathcal{I}(\tilde{\Gamma}_\mathcal{H}(\mathbb{Z}_n))$ contains a subgraph isomorphic to $\mathcal{I}(\tilde{\Gamma}_\mathcal{H}(\mathbb{Z}_{p_1^4p_2^4}))$ and Figure \ref{proj1}(c) depicts that  $\mathcal{I}(\tilde{\Gamma}_\mathcal{H}(\mathbb{Z}_{p_1^4p_2^4}))$ is not embeddable on a projective plane. Hence, in this case,  $\tilde{g}(\tilde{\Gamma}_\mathcal{H}(\mathbb{Z}_n)) \neq  1$. 
			
			 \begin{figure}[htbp!]
			 	\centering
			\subfloat[An embedding of $\mathcal{I}(\tilde{\Gamma}_\mathcal{H}(\mathbb{Z}_{p_1^4p_2^3}))$ on a projective plane]{
					\begin{tikzpicture}[scale=.65, every node/.style={circle, fill=black, minimum size=4pt, inner sep=0pt}]
						% Draw a circle with center at (0,0) and radius 3
						\draw (-3,-5) rectangle (7,4); % Draws a rectangle from (0,0) to (4,2)
						
						% Nodes inside the circle
						\node[label=left:\tiny{${<p_2^3>}$}] (u1) at (0,-2.5) {};
						\node[label=right:\tiny{${<p_1p_2^3>}$}] (u2) at (3,0) {};
						\node[label=right:\tiny{${<p_1^2p_2^3>}$}] (u3) at (3,1) {};
						\node[label=left:\tiny{${<p_1^4p_2^2>}$}] (u4) at (1,-1.5) {};
						\node[label=below:\tiny{${<p_1^4p_2>}$}] (u5) at (6.5,-2) {};
						\node[label=right:\tiny{${<p_1^4>}$}] (u6) at (7,1.5) {};
						\node[label=left:\tiny{${<p_1^4>}$}] (u7) at (-3,-2) {};
						\node[label=right:\tiny{${<p_1^3p_2^3>}$}] (u9) at (3,3) {};
						
						\node[label=below:\tiny{$e_3$}] (v3) at (1,-2) {};
						\node[label=right:\tiny{$e_2$}] (v2) at (2,.5) {};
						\node[label=above:\tiny{$e_1$}] (v1) at (2,1) {};
						\node[label=below:\tiny{$e_4$}] (v4) at (4.5,-1) {};
						\node[label=above:\tiny{$e_5$}] (v5) at (4,2) {};
						\node[label=above:\tiny{$e_6$}] (v6) at (4,-4.5) {};
						\node[label=left:\tiny{$e_7$}] (v7) at (3,-3) {};
						\node[label=above:\tiny{$e_8$}] (v8) at (5,0) {};
						\node[label=below:\tiny{$e_9$}] (v9) at (1,-5) {};
						\node[label=above:\tiny{$e_9$}] (v10) at (0,4) {};
						\node[label=above:\tiny{$e_{10}$}] (v13) at (0,3) {};
						\node[label=above:\tiny{$e_{11}$}] (v14) at (0,2) {};
						\node[label=below:\tiny{$e_{12}$}] (v11) at (0,-5) {};
						\node[label=above:\tiny{$e_{12}$}] (v12) at (1,4) {};
						%\draw (u4) -- (u5);
						
						%\draw (u14) to[out=0, in=270](u3);
						
						%\draw[thick, blue] (u2) to[out=60, in=180](u4);
						
						\draw (u4) -- (v1);
						\draw (v1) -- (u3);
						\draw (u4) -- (v2);
						\draw (v2) -- (u2);
						\draw (u4) -- (v3);
						\draw (v3) -- (u1);
						\draw (u5) -- (v4);
						\draw (v4) -- (u3);
						\draw (u6) -- (v5);
						\draw (v5) -- (u3);
						\draw (v6) -- (u1);
						\draw (u6) -- (v8);
						\draw (u3) -- (v8);
						\draw (u1) -- (v9);
						\draw (u7) -- (v10);
						\draw (u7) -- (v13);
						\draw (u7) -- (v14);
						\draw (u9) -- (v13);
						\draw (u9) -- (v14);
						\draw (u5) -- (v6);
						\draw (u5)-- (v7);
						\draw (v7) -- (u2);
						%\draw (u8) to[out=270, in=180](v6);
						\draw (u4) to[out=180, in=180](v11);
						\draw (u9) -- (v12);
						%\draw (v6) to[out=0, in=0](u1);
						\draw[->,] (0,4) -- (2,4) [right] {};
						\draw[->,] (2,-5) -- (-1,-5) [right] {};
						\draw[->,] (7,1) -- (7,0) [right] {};
						\draw[->,] (-3,0) -- (-3,1) [right] {};
					\end{tikzpicture}}
				\hspace{.2cm}
				\subfloat[$\mathcal{I}(\tilde{\Gamma}_\mathcal{H}(\mathbb{Z}_{p_1^5p_2^3}))$ on projective plane]{				\begin{tikzpicture}[scale=.65, every node/.style={circle, fill=black, minimum size=4pt, inner sep=0pt}]
						% Draw a circle with center at (0,0) and radius 3
						\draw (-3,-5) rectangle (7,4); % Draws a rectangle from (0,0) to (4,2)
						
						% Nodes inside the circle
						\node[label=left:\tiny{${<p_2^3>}$}] (u1) at (0,-2.5) {};
						\node[label=below:\tiny{${<p_1p_2^3>}$}] (u2) at (3,0) {};
						\node[label=right:\tiny{${<p_1^2p_2^3>}$}] (u3) at (3,1) {};
						\node[label=left:\tiny{${<p_1^5p_2^2>}$}] (u4) at (1,-1.5) {};
						\node[label=below:\tiny{${<p_1^5p_2>}$}] (u5) at (7,-2) {};
						\node[label=right:\tiny{${<p_1^5>}$}] (u6) at (7,1.5) {};
						\node[label=left:\tiny{${<p_1^5>}$}] (u7) at (-3,-2) {};
						\node[label=left:\tiny{${<p_1^5p_2>}$}] (u8) at (-3,1.5) {};
						\node[label=right:\tiny{${<p_1^3p_2^3>}$}] (u9) at (3,3) {};
						\node[label=above:\tiny{${<p_1^4p_2^3>}$}] (u10) at (4,4) {};
						\node[label=below:\tiny{${<p_1^4p_2^3>}$}] (u11) at (0,-5) {};
						
						\node[label=below:\tiny{$e_3$}] (v3) at (1,-2) {};
						\node[label=right:\tiny{$e_2$}] (v2) at (2,.5) {};
						\node[label=above:\tiny{$e_1$}] (v1) at (2,1) {};
						\node[label=below:\tiny{$e_4$}] (v4) at (4,0) {};
						\node[label=above:\tiny{$e_5$}] (v5) at (4,2) {};
						\node[label=left:\tiny{$e_6$}] (v6) at (-2,-4) {};
						\node[label=above:\tiny{$e_7$}] (v7) at (5,-2) {};
						\node[label=above:\tiny{$e_8$}] (v8) at (5,0) {};
						\node[label=below:\tiny{$e_9$}] (v9) at (4,-5) {};
						\node[label=above:\tiny{$e_9$}] (v10) at (0,4) {};
						\node[label=above:\tiny{$e_{10}$}] (v13) at (0,3) {};
						\node[label=above:\tiny{$e_{11}$}] (v14) at (0,2) {};
						\node[label=below:\tiny{$e_{12}$}] (v11) at (1,-5) {};
						\node[label=above:\tiny{$e_{12}$}] (v12) at (1,4) {};
						\node[label=right:\tiny{$e_{13}$}] (v15) at (5,3) {};
						\node[label=left:\tiny{$e_{14}$}] (v16) at (5,-3) {};
						\node[label=right:\tiny{$e_{15}$}] (v17) at (3,-2) {};
						%\draw (u4) -- (u5);
						
						%\draw (u14) to[out=0, in=270](u3);
						
						%\draw[thick, blue] (u2) to[out=60, in=180](u4);
						
						\draw (u4) -- (v1);
						\draw (v1) -- (u3);
						\draw (u4) -- (v2);
						\draw (v2) -- (u2);
						\draw (u4) -- (v3);
						\draw (v3) -- (u1);
						\draw (u5) -- (v4);
						\draw (v4) -- (u3);
						\draw (u6) -- (v5);
						\draw (v5) -- (u3);
						\draw (v6) -- (u1);
						\draw (u6) -- (v8);
						\draw (u3) -- (v8);
						\draw (u1) -- (v9);
						\draw (u7) -- (v10);
						\draw (u7) -- (v13);
						\draw (u7) -- (v14);
						\draw (u9) -- (v13);
						\draw (u9) -- (v14);
						\draw (u10) -- (v15);
						\draw (u6) -- (v15);
						\draw (u11) -- (v16);
						\draw (u5) -- (v16);
						\draw (u11) -- (v17);
						
						\draw (u5) -- (v7);
						\draw (v7) -- (u2);
						\draw (u8) to[out=0, in=90](v6);
						\draw (u4) to[out=0, in=90](v11);
						\draw (u9) -- (v12);
						\draw (u4) to[out=0, in=100](v17);
						%\draw (v6) to[out=0, in=0](u1);
						\draw[->,] (0,4) -- (2,4) [right] {};
						\draw[->,] (2,-5) -- (-1,-5) [right] {};
						\draw[->,] (7,1) -- (7,0) [right] {};
						\draw[->,] (-3,0) -- (-3,1) [right] {};
					\end{tikzpicture}}
				\hspace{.2cm}
			\subfloat[$\mathcal{I}(\tilde{\Gamma}_\mathcal{H}(\mathbb{Z}_{p_1^4p_2^4}))$ on projective plane]{		\begin{tikzpicture}[scale=.65, every node/.style={circle, fill=black, minimum size=3pt, inner sep=0pt}]
						% Draw a circle with center at (0,0) and radius 3
						\draw (-4,-5) rectangle (7,4); % Draws a rectangle from (0,0) to (4,2)
						
						% Nodes inside the circle
						\node[label=left:\tiny{${<p_2^4>}$}] (u1) at (0,-2.5) {};
						\node[label=right:\tiny{${<p_1p_2^4>}$}] (u2) at (7,-1) {};
						\node[label=right:\tiny{${<p_1^2p_2^4>}$}] (u3) at (3,1) {};
						\node[label=right:\tiny{${<p_1^4p_2^2>}$}] (u4) at (1,-1.5) {};
						\node[label=below:\tiny{${<p_1^4p_2>}$}] (u5) at (7,-2) {};
						\node[label=right:\tiny{${<p_1^4>}$}] (u6) at (7,1.5) {};
						\node[label=left:\tiny{${<p_1^4>}$}] (u7) at (-4,-1) {};
						\node[label=left:\tiny{${<p_1^4p_2>}$}] (u8) at (-4,1.5) {};
						\node[label=right:\tiny{${<p_1^3p_2^4>}$}] (u9) at (3,3) {};
						\node[label=right:\tiny{${<p_1^4p_2^3>}$}] (u10) at (1,2) {};
						\node[label=left:\tiny{${<p_1p_2^4>}$}] (u11) at (-4,-2) {};
						
						\node[label=below:\tiny{$e_9$}] (v3) at (1,-2) {};
						
						\node[label=above:\tiny{$e_2$}] (v19) at (-2,-1) {};
						\node[label=left:\tiny{$e_{11}$}] (v1) at (2,1) {};
						5) {};
						\node[label=above:\tiny{$e_3$}] (v5) at (5,1.5) {};
						\node[label=right:\tiny{$e_5$}] (v6) at (0,-4) {};
						\node[label=above:\tiny{$e_6$}] (v7) at (4,-2) {};
						\node[label=below:\tiny{$e_1$}] (v9) at (5,-5) {};
						\node[label=above:\tiny{$e_1$}] (v10) at (0,4) {};
						\node[label=above:\tiny{$e_4$}] (v13) at (0,3) {};
						\node[label=below:\tiny{$e_{12}$}] (v11) at (3,-5) {};
						\node[label=above:\tiny{$e_{12}$}] (v12) at (1,4) {};
						\node[label=left:\tiny{$e_{13}$}] (v15) at (0,0) {};
						\node[label=below:\tiny{$e_{14}$}] (v16) at (-1,.8) {};
						\node[label=left:\tiny{$e_{15}$}] (v17) at (2,1.5) {};
						\node[label=right:\tiny{$e_{16}$}] (v18) at (3,2.5) {};
						\node[label=below:\tiny{$e_7$}] (v20) at (0,-5) {};
						\node[label=above:\tiny{$e_8$}] (v21) at (3,4) {};
						\node[label=below:\tiny{$e_8$}] (v22) at (1,-5) {};
						\node[label=above:\tiny{$e_{10}$}] (v23) at (2.5,0) {};
						\node[label=above:\tiny{$e_7$}] (v24) at (5,4) {};
						%\draw (u4) -- (u5);
						
						%\draw (u14) to[out=0, in=270](u3);
						
						%\draw[thick, blue] (u2) to[out=60, in=180](u4);
						
						\draw (u4) -- (v1);
						\draw (v1) -- (u3);
						\draw (u4) -- (v3);
						\draw (v3) -- (u1);
						\draw (u6) -- (v5);
						\draw (v5) -- (u3);
						\draw (v6) -- (u1);
						\draw (u1) -- (v9);
						\draw (u7) -- (v10);
						\draw (u7) -- (v13);
						\draw (u9) -- (v13);
						\draw (u1) -- (v15);
						\draw (u10) -- (v15);
						\draw (u11) -- (v16);
						\draw (u10) -- (v16);
						\draw (u3) -- (v17);
						\draw (u10) -- (v17);
						\draw (u9) -- (v18);
						\draw (u10) -- (v18);
						\draw (u7) -- (v19);
						\draw (u11) -- (v19);
						\draw (u3) -- (v24);
						\draw (u5) -- (v20);
						\draw (u9) -- (v21);
						\draw (u5) -- (v22);
						\draw (u2) -- (v23);
						\draw (u4) -- (v23);
						
						\draw (u5) to[out=200, in=0](v7);
						\draw (v7) to[out=0, in=200](u2);
						\draw (u8) to[out=300, in=180](v6);
						\draw (u4) to[out=330, in=90](v11);
						\draw (u9) -- (v12);
						%\draw (v6) to[out=0, in=0](u1);
						\draw[->,] (0,4) -- (2,4) [right] {};
						\draw[->,] (2,-5) -- (-1,-5) [right] {};
						\draw[->,] (7,1) -- (7,0) [right] {};
						\draw[->,] (-4,0) -- (-4,1) [right] {};
					\end{tikzpicture}}

				\caption{}
				\label{proj1}
			\end{figure}

		\begin{figure}[htbp!]
			\centering
			\subfloat[$\mathcal{I}(\tilde{\Gamma}_\mathcal{H}(\mathbb{Z}_{p_1^4p_2p_3}))$ on projective plane]{
			\begin{tikzpicture}[scale=1.2, every node/.style={circle, fill=black, minimum size=4pt, inner sep=0pt}]
						% Draw a circle with center at (0,0) and radius 3
						\draw (-5,.5) rectangle (3,7); % Draws a rectangle from (0,0) to (4,2)
						
						% Nodes in set U
						\node[label=above:\tiny{$<p_1^4p_2>$}] (u1) at (-1,6.8) {};
						\node[label=left:\tiny{$<p_1^4p_3>$}] (u2) at (-5,3.5) {};
						\node[label=left:\tiny{$<p_2p_3>$}] (u3) at (1.5,3) {};
						\node[label=above:\tiny{$<p_1p_2p_3>$}] (u4) at (-2,7) {};
						\node[label=above:\tiny{$<p_3>$}] (u5) at (-1.7,6.5) {};
						\node[label=above left:\tiny{$<p_1p_3>$}] (u6) at (-2,5) {};
						\node[label=below:\tiny{$<p_2>$}] (u7) at (-4,2) {};
						\node[label=below:\tiny{$<p_1p_2>$}] (u8) at (-3,2) {};
						\node[label=above left:\tiny{$<p_1^4>$}] (u9) at (0,3) {};
						\node[label=above:\tiny{$<p_1^2p_2p_3>$}] (u10) at (-.3,5) {};
						\node[label=right:\tiny{$<p_1^2p_3>$}] (u11) at (1,6.5) {};
						\node[label=right:\tiny{$<p_1^2p_2>$}] (u12) at (-3,3.8) {};
						\node[label=left:\tiny{$<p_1^4p_3>$}] (u14) at (3,3.5) {};
						\node[label=below:\tiny{$<p_1p_2p_3>$}] (u15) at (0,.5) {};
						\node[label=below:\tiny{$<p_1^3p_2p_3>$}] (u16) at (-3,.5) {};
						\node[label=above:\tiny{$<p_1^3p_2p_3>$}] (u17) at (2,7) {};
						\node[label=right:\tiny{$<p_1^3p_3>$}] (u18) at (1.5,6) {};
						\node[label=right:\tiny{$<p_1^3p_2>$}] (u19) at (-1.8,3) {};
						% Nodes in set V
						
						\node[label=below:\tiny{$e_1$}] (v1) at (1,4) {};
						\node[label=below:\tiny{$e_2$}] (v2) at (-3,5) {};
						\node[label=above:\tiny{$e_{16}$}] (v3) at (1,2) {};
						\node[label=right:\tiny{$e_{15}$}] (v4) at (1,1) {};
						\node[label=left:\tiny{$e_5$}] (v5) at (-2,6) {};
						\node[label=below:\tiny{$e_6$}] (v6) at (-1.5,5) {};
						\node[label=left:\tiny{$e_9$}] (v7) at (-3,3) {};
						\node[label=right:\tiny{$e_{10}$}] (v8) at (-2.5,3) {};
						\node[label=left:\tiny{$e_3$}] (v9) at (-1,4.5) {};
						\node[label=below:\tiny{$e_7$}] (v10) at (0,6.5) {};
						\node[label=right:\tiny{$e_{11}$}] (v11) at (-3.5,3.5) {};
						\node[label=right:\tiny{$e_{14}$}] (v12) at (0,4.5) {};
						\node[label=right:\tiny{$e_4$}] (v13) at (2,5) {};
						\node[label=below:\tiny{$e_8$}] (v14) at (.5,6) {};
						\node[label=right:\tiny{$e_{12}$}] (v15) at (-2.5,3.4) {};
						\node[label=right:\tiny{$e_{13}$}] (v16) at (-1,1) {};
						% Edges
						
						\draw (u1) -- (v1);
						\draw (u14) -- (v1);
						\draw (u3) -- (v1);
						\draw (u1) -- (v2);
						\draw (u2) -- (v2);
						\draw (u4) -- (v2);
						\draw (u9) -- (v3);
						\draw (u3) to[out=0, in=10](v3);
						\draw (u15) -- (v4);
						\draw (u9) -- (v4);
						\draw (u1) -- (v5);
						\draw (u5) -- (v5);
						\draw (u1) -- (v6);
						\draw (u6) -- (v6);
						\draw (u2) -- (v7);
						\draw (u7) -- (v7);
						\draw (u2) -- (v8);
						\draw (u8) -- (v8);
						\draw (u1) -- (v9);
						\draw (u2) -- (v9);
						\draw (u10) -- (v9);
						\draw (u1) -- (v10);
						\draw (u11) -- (v10);
						\draw (u2) -- (v11);
						\draw (u12) -- (v11);
						\draw (u9) -- (v12);
						\draw (u10) -- (v12);
						\draw (u14)  -- (v13);
						\draw (u17) -- (v13);
						\draw (u1)  -- (v13);
						\draw (u18) -- (v14);
						\draw (u1)  -- (v14);
						\draw (u19) -- (v15);
						\draw (u2)  -- (v15);
						\draw (u9) -- (v16);
						\draw (u16)  -- (v16);
						%\draw (u4) .. controls (-6,6)  .. (v2);
						
						\draw[->,] (0,7) -- (1,7) [right] {};
						\draw[->,] (2,.5) -- (-1,.5) [right] {};
						\draw[->,] (3,3) -- (3,2) [right] {};
						\draw[->,] (-5,2) -- (-5,3) [right] {};
				\end{tikzpicture}}
			\hspace{.2cm}
			\subfloat[$\mathcal{I}(\tilde{\Gamma}_\mathcal{H}(\mathbb{Z}_{p_1^5p_2p_3}))$ on projective plane]{
				\begin{tikzpicture}[scale=1.2, every node/.style={circle, fill=black, minimum size=4pt, inner sep=0pt}]
						% Draw a circle with center at (0,0) and radius 3
						\draw (-5,.5) rectangle (3,7); % Draws a rectangle from (0,0) to (4,2)
						
						% Nodes in set U
						\node[label=above:\tiny{$<p_1^5p_2>$}] (u1) at (-1,6.8) {};
						\node[label=left:\tiny{$<p_1^5p_3>$}] (u2) at (-5,3.5) {};
						\node[label=left:\tiny{$<p_2p_3>$}] (u3) at (1.5,3) {};
						\node[label=above:\tiny{$<p_1p_2p_3>$}] (u4) at (-2,7) {};
						\node[label=right:\tiny{$<p_3>$}] (u5) at (-1.7,6.5) {};
						\node[label=above left:\tiny{$<p_1p_3>$}] (u6) at (-2,5) {};
						\node[label=below:\tiny{$<p_2>$}] (u7) at (-4,2) {};
						\node[label=below:\tiny{$<p_1p_2>$}] (u8) at (-3,2) {};
						\node[label=above left:\tiny{$<p_1^5>$}] (u9) at (0,3) {};
						\node[label=above:\tiny{$<p_1^2p_2p_3>$}] (u10) at (-.3,5) {};
						\node[label=right:\tiny{$<p_1^2p_3>$}] (u11) at (1,6.5) {};
						\node[label=right:\tiny{$<p_1^2p_2>$}] (u12) at (-3,3.8) {};
						\node[label=right:\tiny{$<p_1^5p_3>$}] (u14) at (3,3.5) {};
						\node[label=below:\tiny{$<p_1p_2p_3>$}] (u15) at (0,.5) {};
						\node[label=below:\tiny{$<p_1^3p_2p_3>$}] (u16) at (-3,.5) {};
						\node[label=above:\tiny{$<p_1^3p_2p_3>$}] (u17) at (2,7) {};
						\node[label=right:\tiny{$<p_1^3p_3>$}] (u18) at (1.5,6) {};
						\node[label=right:\tiny{$<p_1^3p_2>$}] (u19) at (-1.8,3) {};
						\node[label=left:\tiny{$<p_1^4p_2>$}] (u20) at (-4,2.5) {};
						\node[label=right:\tiny{$<p_1^4p_2>$}] (u21) at (1.4,4.5) {};
						\node[label=right:\tiny{$<p_1^5p_2p_3>$}] (u22) at (1.8,3) {};
						% Nodes in set V
						
						\node[label=below:\tiny{$e_1$}] (v1) at (1,4) {};
						\node[label=below:\tiny{$e_2$}] (v2) at (-3,5) {};
						\node[label=above:\tiny{$e_{13}$}] (v3) at (1,2) {};
						\node[label=right:\tiny{$e_{14}$}] (v4) at (1,1) {};
						\node[label=left:\tiny{$e_5$}] (v5) at (-2,6) {};
						\node[label=below:\tiny{$e_6$}] (v6) at (-1.5,5) {};
						\node[label=left:\tiny{$e_9$}] (v7) at (-3,3) {};
						\node[label=right:\tiny{$e_{10}$}] (v8) at (-2.5,3) {};
						\node[label=left:\tiny{$e_3$}] (v9) at (-1,4.5) {};
						\node[label=below:\tiny{$e_7$}] (v10) at (0,6.5) {};
						\node[label=right:\tiny{$e_{11}$}] (v11) at (-3.5,3.5) {};
						\node[label=right:\tiny{$e_{15}$}] (v12) at (0,4.5) {};
						\node[label=right:\tiny{$e_4$}] (v13) at (2,5) {};
						\node[label=below:\tiny{$e_8$}] (v14) at (.5,6) {};
						\node[label=right:\tiny{$e_{12}$}] (v15) at (-2.5,3.4) {};
						\node[label=right:\tiny{$e_{16}$}] (v16) at (-1,1) {};
						\node[label=left:\tiny{$e_{17}$}] (v17) at (-4,3) {};
						\node[label=left:\tiny{$e_{18}$}] (v18) at (.8,5) {};
						\node[label=below:\tiny{$e_{19}$}] (v19) at (2,.5) {};
						\node[label=above:\tiny{$e_{19}$}] (v20) at (-3,7) {};
						\node[label=right:\tiny{$e_{14}$}] (v21) at (1.5,1.2) {};
						% Edges
						
						\draw (u1) -- (v1);
						\draw (u14) -- (v1);
						\draw (u3) -- (v1);
						\draw (u1) -- (v2);
						\draw (u2) -- (v2);
						\draw (u4) -- (v2);
						\draw (u9) -- (v3);
						\draw (u3) -- (v3);
						\draw (u15) -- (v4);
						\draw (u9) -- (v4);
						\draw (u1) -- (v5);
						\draw (u5) -- (v5);
						\draw (u1) -- (v6);
						\draw (u6) -- (v6);
						\draw (u2) -- (v7);
						\draw (u7) -- (v7);
						\draw (u2) -- (v8);
						\draw (u8) -- (v8);
						\draw (u1) -- (v9);
						\draw (u2) -- (v9);
						\draw (u10) -- (v9);
						\draw (u1) -- (v10);
						\draw (u11) -- (v10);
						\draw (u2) -- (v11);
						\draw (u12) -- (v11);
						\draw (u9) -- (v12);
						\draw (u10) -- (v12);
						\draw (u14)  -- (v13);
						\draw (u17) -- (v13);
						\draw (u1)  -- (v13);
						\draw (u18) -- (v14);
						\draw (u1)  -- (v14);
						\draw (u19) -- (v15);
						\draw (u2)  -- (v15);
						\draw (u9) -- (v16);
						\draw (u16)  -- (v16);
						\draw (u2) -- (v17);
						\draw (u20) -- (v17);
						\draw (u1) -- (v18);
						\draw (u21)  -- (v18);
						\draw (u14) -- (v19);
						\draw (u22)  -- (v19);
						\draw (u1)  --  (v20);
						\draw (u9) -- (v21);
						\draw (u22)  -- (v21);
						%\draw (u4) .. controls (-6,6)  .. (v2);
						
						\draw[->,] (0,7) -- (1,7) [right] {};
						\draw[->,] (2,.5) -- (-1,.5) [right] {};
						\draw[->,] (3,3) -- (3,2) [right] {};
						\draw[->,] (-5,2) -- (-5,3) [right] {};
				\end{tikzpicture}}
			\caption{}
			\label{proj2}
			\end{figure}
			\noindent\textbf{Case 2.} Suppose $\omega(n)  = 3$.\\
		\textbf{Subcase 2.1.} If $n=p_1p_2p_3$ or $n=p_1^2p_2p_3$, then, by Theorem \ref{planarity}, $\tilde{\Gamma}_\mathcal{H}(\mathbb{Z}_n)$ is planar and hence, $\tilde{g}(\tilde{\Gamma}_\mathcal{H}(\mathbb{Z}_n)) \neq 1$.\\
			\textbf{Subcase 2.2.}  If $n \neq p_1p_2p_3$ or $n \neq p_1^2p_2p_3$, then consider the following cases:\\
			\textbf{Subsubcase 2.2.(a)} If $n$ divides $p_1^4p_2p_3$, then  $\mathcal{I}(\tilde{\Gamma}_\mathcal{H}(\mathbb{Z}_n))$ is isomorphic to a subgraph of $\mathcal{I}(\tilde{\Gamma}_\mathcal{H}(\mathbb{Z}_{p_1^4p_2p_3}))$ and   Figure \ref{proj2}(a) depicts an embedding of $\mathcal{I}(\tilde{\Gamma}_\mathcal{H}(\mathbb{Z}_{p_1^4p_2p_3}))$ on a projective plane. Hence, in this case,  $\tilde{g}(\tilde{\Gamma}_\mathcal{H}(\mathbb{Z}_n)) = 1$. \\
			\textbf{Subsubcase 2.2.(b)} If $n$ is divisible by $p_1^5p_2p_3$, then $\mathcal{I}(\tilde{\Gamma}_\mathcal{H}(\mathbb{Z}_n))$ contains a subgraph isomorphic to $\mathcal{I}(\tilde{\Gamma}_\mathcal{H}(\mathbb{Z}_{p_1^5p_2p_3}))$ and Figure \ref{proj2}(b) depicts that $\mathcal{I}(\tilde{\Gamma}_\mathcal{H}(\mathbb{Z}_{p_1^5p_2p_3}))$ is not embeddable on a projective plane. Hence, in this case,  $\tilde{g}(\tilde{\Gamma}_\mathcal{H}(\mathbb{Z}_n)) \neq 1$.
			\\ 
			\textbf{Subsubcase 2.2.(c)} If $n$ contain atleast two prime divisors with power greater than  or equal to 2, then  $\mathcal{I}(\tilde{\Gamma}_\mathcal{H}(\mathbb{Z}_n))$ contains a subgraph isomorphic to $\mathcal{I}(\tilde{\Gamma}_\mathcal{H}(\mathbb{Z}_{p_1^2p_2^2p_3}))$ and Figure \ref{proj3}(a) depicts that  $\mathcal{I}(\tilde{\Gamma}_\mathcal{H}(\mathbb{Z}_{p_1^2p_2^2p_3}))$  is not embeddable on a projective plane. Hence, in this case,   $\tilde{g}(\tilde{\Gamma}_\mathcal{H}(\mathbb{Z}_n)) \neq 1$.\\
			\noindent\textbf{Case 3.} Suppose $\omega(n) \geq 4$.\\
			Observe that $\mathcal{I}(\tilde{\Gamma}_\mathcal{H}(\mathbb{Z}_n))$ contains a subgraph isomorphic to $\mathcal{I}(\tilde{\Gamma}_\mathcal{H}(\mathbb{Z}_{p_1p_2p_3p_4}))$ and Figure \ref{proj3}(b) depicts that  $\mathcal{I}(\tilde{\Gamma}_\mathcal{H}(\mathbb{Z}_{p_1p_2p_3p_4}))$ is not embeddable on a projective plane  and hence, in this case,   $\tilde{g}(\tilde{\Gamma}_\mathcal{H}(\mathbb{Z}_n)) \neq 1$.
			\begin{figure}[htbp!]
					\centering
				\subfloat[$\mathcal{I}(\tilde{\Gamma}_\mathcal{H}(\mathbb{Z}_{p_1^2p_2^2p_3}))$  on a projective plane]{\begin{tikzpicture}[scale=1, every node/.style={circle, fill=black, minimum size=3pt, inner sep=0pt}]
							% Draw a circle with center at (0,0) and radius 3
							\draw (-4,0) rectangle (4,8); % Draws a rectangle from (0,0) to (4,2)
							
							% Nodes in set U
							\node[label=above:\tiny{$<p_1^2p_2^2>$}] (u1) at (-1,8) {};
							\node[label=right:\tiny{$<p_1^2p_2p_3>$}] (u2) at (-3,3.5) {};
							\node[label=right:\tiny{$<p_1p_2^2p_3>$}] (u3) at (4,5) {};
							\node[label=left:\tiny{$<p_2^2p_3>$}] (u4) at (-1,5) {};
							\node[label=left:\tiny{$<p_1^2p_3>$}] (u5) at (1,6.2) {};
							\node[label=right:\tiny{$<p_1p_2p_3>$}] (u6) at (0,3.5) {};
							\node[label=right:\tiny{$<p_1p_3>$}] (u7) at (.2,2.3) {};
							\node[label=right:\tiny{$<p_2p_3>$}] (u8) at (.2,1) {};
							\node[label=left:\tiny{$<p_3>$}] (u9) at (-2,6.5) {};
						
							\node[label=right:\tiny{$<p_1p_2^2>$}] (u11) at (1.4,4) {};
							\node[label=left:\tiny{$<p_1^2p_2>$}] (u12) at (3,1) {};
							\node[label=right:\tiny{$<p_1^2>$}] (u13) at (-2.5,6.2) {};
							\node[label=below:\tiny{$<p_1^2p_2^2>$}] (u14) at (-1,0) {};
							\node[label=left:\tiny{$<p_1p_2^2p_3>$}] (u15) at (-4,5) {};
							\node[label=right:\tiny{$<p_2^2>$}] (u16) at (-1.5,2.4) {};
							
							% Nodes in set V
							\node[label=left:\tiny{$e_1$}] (v1) at (-3.5,3.5) {};
							\node[label=below right:\tiny{$e_2$}] (v2) at (-3,4.5) {};
							\node[label=above:\tiny{$e_3$}] (v3) at (3,6.4) {};
							\node[label=below:\tiny{$e_4$}] (v4) at (-1,6.5) {};
							\node[label=right:\tiny{$e_5$}] (v5) at (-0.5,3) {};
							\node[label=below right:\tiny{$e_6$}] (v6) at (-0.5,2) {};
							\node[label=below:\tiny{$e_7$}] (v7) at (0,1) {};
							\node[label=right:\tiny{$e_8$}] (v8) at (-1.5,6.5) {};
							\node[label=below:\tiny{$e_9$}] (v9) at (0,4.5) {};
							\node[label=left:\tiny{$e_{10}$}] (v10) at (-3,2) {};
							\node[label=left:\tiny{$e_{11}$}] (v11) at (3,3.5) {};
							\node[label=right:\tiny{$e_{12}$}] (v12) at (-3.5,5.5) {};
							\node[label=right:\tiny{$e_{13}$}] (v13) at (0,5) {};
							\node[label=right:\tiny{$e_{14}$}] (v14) at (2,5) {};
							\node[label=left:\tiny{$e_{15}$}] (v15) at (0,5.5) {};

							% Edges
							
							\draw (u1) to[out=200, in=70] (v1);
							\draw (u2) -- (v1);
							\draw (u15) -- (v1);
							\draw (u14) -- (v2);
							\draw (u2) -- (v2);
							\draw (u4) -- (v2);
							\draw (u1) -- (v3);
							\draw (u3) -- (v3);
							\draw (u5) -- (v3);
							\draw (u1) -- (v4);
							\draw (u4) -- (v4);
							\draw (u5) -- (v4);
							\draw (u6) -- (v5);
							\draw (u14) -- (v5);
							\draw (u7) -- (v6);
							\draw (u14) -- (v6);
							\draw (u8) -- (v7);
							\draw (u14) -- (v7);
							\draw (u1) -- (v8);
							\draw (u9) -- (v8);
							\draw (u2) -- (v9);
							\draw (u11) -- (v9);
							\draw (u2) -- (v10);
							\draw (u16) -- (v10);
							\draw (u3) -- (v11);
							\draw (u12) -- (v11);
							\draw (u15) -- (v12);
							\draw (u13) -- (v12);
							\draw (u11) -- (v13);
							\draw (u4) -- (v13);
							\draw (u5) -- (v14);
							\draw (u11) -- (v14);
							\draw (u5) -- (v15);
							\draw (u16) -- (v15);
							%\draw (u4) .. controls (-6,6)  .. (v2);
							
							\draw[->,] (0,8) -- (2,8) [right] {};
							\draw[->,] (2,0) -- (-2,0) [right] {};
							\draw[->,] (-4,4) -- (-4,6) [right] {};
							\draw[->,] (4,5) -- (4,4) [right] {};

					\end{tikzpicture}}
		\hspace{.2cm}
				\subfloat[$\mathcal{I}(\tilde{\Gamma}_{\mathcal{H}}(\mathbb{Z}_{p_1p_2p_3p_4}))$ on a projective plane]{
					\begin{tikzpicture}[scale=1.1, every node/.style={circle, fill=black, minimum size=3pt, inner sep=0pt}]
							% Draw a circle with center at (0,0) and radius 3
							\draw (-3,-3) rectangle (5,4); % Draws a rectangle from (0,0) to (4,2)
							
							% Nodes inside the circle
							\node[label=above:\tiny{${<p_1p_2p_3>}$}] (u1) at (0,4) {};
							\node[label=right:\tiny{${<p_1p_2p_4>}$}] (u2) at (1,3) {};
							\node[label=right:\tiny{${<p_1p_3p_4>}$}] (u3) at (-1,2.5) {};
							\node[label=right:\tiny{${<p_2p_3p_4>}$}] (u4) at (-2,2) {};
							\node[label=above:\tiny{${<p_3p_4>}$}] (u5) at (2,4) {};
							\node[label=left:\tiny{${<p_2p_4>}$}] (u6) at (-.8,3) {};
							\node[label=below:\tiny{${<p_1p_2p_3>}$}] (u7) at (0,-3) {};
							\node[label=left:\tiny{${<p_1p_4>}$}] (u8) at (0,-1) {};
							\node[label=left:\tiny{${<p_4>}$}] (u9) at (1.5,-2) {};
							\node[label=right:\tiny{${<p_2p_3>}$}] (u10) at (-2,.5) {};
							\node[label=right:\tiny{${<p_1p_3>}$}] (u11) at (3,0) {};
							\node[label=below:\tiny{${<p_3>}$}] (u12) at (2.5,1) {};
							\node[label=below:\tiny{${<p_1p_2>}$}] (u13) at (-2.5,2.5) {};
							\node[label=left:\tiny{${<p_2>}$}] (u14) at (-2.5,3) {};
							\node[label=left:\tiny{${<p_3>}$}] (u15) at (-2.2,1) {};
							\node[label=below:\tiny{${<p_3p_4>}$}] (u16) at (-2,-3) {};

							\node[label=below:\tiny{$e_1$}] (v1) at (2,1) {};
							\node[label=right:\tiny{$e_2$}] (v2) at (1.5,3.5) {};
							\node[label=below:\tiny{$e_3$}] (v3) at (-.2,3) {};
							\node[label=right:\tiny{$e_4$}] (v4) at (2,0) {};
							\node[label=below:\tiny{$e_5$}] (v5) at (1,-2.5) {};
							\node[label=left:\tiny{$e_6$}] (v6) at (-3,-.2) {};
							\node[label=right:\tiny{$e_6$}] (v7) at (5,2) {};
							\node[label=right:\tiny{$e_7$}] (v8) at (3.5,2) {};
							\node[label=right:\tiny{$e_8$}] (v9) at (2.5,2) {};
							\node[label=above:\tiny{$e_9$}] (v10) at (-2,2.5) {};
							\node[label=left:\tiny{$e_{10}$}] (v11) at (-2,3.5) {};
							\node[label=right:\tiny{$e_{11}$}] (v12) at (-2.5,1.5) {};
							\node[label=left:\tiny{$e_{12}$}] (v13) at (-3,2.5) {};
							\node[label=below:\tiny{$e_{13}$}] (v14) at (3,-3) {};
							\node[label=above:\tiny{$e_{14}$}] (v16) at (0,0) {};
							\node[label=above:\tiny{$e_{13}$}] (v17) at (-2,4) {};
							\node[label=right:\tiny{$e_{12}$}] (v18) at (5,0) {};

							%\draw (u4) -- (u5);
							
							%\draw (u14) to[out=0, in=270](u3);
							
							%\draw[thick, blue] (u2) to[out=60, in=180](u4);
							
							\draw (u1) -- (v1);
							\draw (u2) -- (v1);
							\draw (u3) -- (v1);
							\draw (u4) -- (v1);
							\draw (u1) -- (v2);
							\draw (u2) -- (v2);
							\draw (u5) -- (v2);
							\draw (u1) -- (v3);
							\draw (u3) -- (v3);
							\draw (u6) -- (v3);
							\draw (u7) -- (v4);
							\draw (u4) -- (v4);
							\draw (u8) -- (v4);
							\draw (u9) -- (v5);
							\draw (u7) -- (v5);
							\draw (u2) -- (v7);
							\draw (u3) -- (v6);
							\draw (u10) -- (v6);
							\draw (u2) -- (v8);
							\draw (u11) -- (v8);
							\draw (u2) -- (v9);
							\draw (u12) -- (v9);
							\draw (u3) -- (v10);
							\draw (u4) -- (v10);
							\draw (u13) -- (v10);
							\draw (u3) -- (v11);
							\draw (u14) -- (v11);
							\draw (u4) -- (v12);
							\draw (u15) -- (v12);
							\draw (u13) -- (v13);
							\draw (u11) -- (v14);
							\draw (u6) -- (v17);
							\draw (u8) -- (v16);
							\draw (u10) -- (v16);
							\draw (u16) -- (v18);

							%	\draw (u5) to[out=90, in=330](v7);

							\draw (u4) to[out=300, in=250](v8);
							\draw[->,] (-2,4) -- (-1,4) [right] {};
							\draw[->,] (0,-3) -- (-1,-3) [right] {};
							\draw[->,] (5,2) -- (5,1) [right] {};
							\draw[->,] (-3,0) -- (-3,.5) [right] {};
					\end{tikzpicture}}
				\caption{}
				\label{proj3}
			\end{figure}
		\end{proof}
		\begin{corollary}
			$\tilde{\Gamma}_\mathcal{H}(\mathbb{Z}_n)$ is projective if and only if $n=p_1^{\alpha_1}p_2^{\alpha}$ with $\alpha=1,2$ or $n=p_1^{\beta}p_2^3$ with  $\beta=3,4$ or $n=p_1^{\gamma}p_2p_3$ with $1 \leq \gamma \leq 4$, where $p_1,p_2,p_3$ are distinct primes and $\alpha_1$ is a positive integer.
		\end{corollary}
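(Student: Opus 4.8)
The plan is to reduce everything to the incidence graph. By Theorem \ref{incidencegraph2}, $\tilde{g}(\tilde{\Gamma}_\mathcal{H}(\mathbb{Z}_n)) = \tilde{g}(\mathcal{I}(\tilde{\Gamma}_\mathcal{H}(\mathbb{Z}_n)))$, so it suffices to decide, for each $n$, whether the bipartite incidence graph embeds on the projective plane $N_1$ but not on the sphere $S_0$. I would organize the argument by $\omega(n)$ and exploit two monotonicity facts: first, if $m \mid n$ then $\mathcal{I}(\tilde{\Gamma}_\mathcal{H}(\mathbb{Z}_m))$ is isomorphic to a subgraph of $\mathcal{I}(\tilde{\Gamma}_\mathcal{H}(\mathbb{Z}_n))$; second, $\tilde{g}(H) \le \tilde{g}(G)$ whenever $H$ is a subgraph of $G$. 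Together these reduce the entire classification to finitely many maximal and minimal test moduli.

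For the sufficiency direction I would treat the two maximal parameter values in each family, namely $n = p_1^4 p_2^3$ and $n = p_1^4 p_2 p_3$. For each I would exhibit an explicit embedding of $\mathcal{I}(\tilde{\Gamma}_\mathcal{H}(\mathbb{Z}_n))$ on the projective plane in the standard square-with-antipodal-identification model. Every $n$ of the form $p_1^{\alpha_1} p_2^3$ or $p_1^{\alpha_1} p_2 p_3$ with $\alpha_1 \in \{3,4\}$ divides one of these two, so its incidence graph is a subgraph and therefore also embeds, giving $\tilde{g} \le 1$. Since none of these $n$ appears in the planar list of Theorem \ref{planarity}, each is non-planar, so $\tilde{g} \ge 1$; hence $\tilde{g} = 1$.

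For the necessity direction I would show that every $n$ outside the stated list satisfies $\tilde{g} \ne 1$. The planar cases of Theorem \ref{planarity} give $\tilde{g} = 0$ directly. For the remaining $n$ I would isolate a short list of critical incidence graphs that are not projective-planar: $\mathcal{I}(\tilde{\Gamma}_\mathcal{H}(\mathbb{Z}_{p_1^5 p_2^3}))$ and $\mathcal{I}(\tilde{\Gamma}_\mathcal{H}(\mathbb{Z}_{p_1^4 p_2^4}))$ when $\omega(n)=2$; $\mathcal{I}(\tilde{\Gamma}_\mathcal{H}(\mathbb{Z}_{p_1^5 p_2 p_3}))$ and $\mathcal{I}(\tilde{\Gamma}_\mathcal{H}(\mathbb{Z}_{p_1^2 p_2^2 p_3}))$ when $\omega(n)=3$; and $\mathcal{I}(\tilde{\Gamma}_\mathcal{H}(\mathbb{Z}_{p_1 p_2 p_3 p_4}))$ when $\omega(n) \ge 4$. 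A short check on the exponent patterns shows that any non-planar $n$ not of the target form is divisible by one of these moduli, so its incidence graph contains the corresponding critical graph as a subgraph and inherits $\tilde{g} \ge 2$, whence $\tilde{g} \ne 1$.

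The hard part is proving that these critical graphs genuinely fail to embed on $N_1$. My main tool would be the Euler-characteristic bound: $\mathcal{I}(\mathcal{H})$ is bipartite and hence has girth at least $4$, so any $2$-cell embedding on $N_k$ satisfies $V - E + F = 2 - k$ with $2E \ge 4F$, giving $E \le 2(V - 2 + k)$; for the projective plane this reads $E \le 2V - 2$. I would compute the vertex count $V$ (the number of qualifying subgroups plus the number of maximal pairwise-trivial-intersection families) and the edge count $E$ (the total number of incidences) for each critical modulus and verify $E > 2V - 2$, which forbids any projective embedding. Where this crude inequality is not decisive, the fallback is to extract an explicit topological obstruction for $N_1$, for instance a subdivision of a known projective-plane obstruction or two edge-disjoint Kuratowski subgraphs that cannot be simultaneously realized on $N_1$; locating and verifying such an obstruction inside the incidence graph is the most delicate step of the argument.
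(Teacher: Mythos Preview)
Your proposal confuses the Corollary with the preceding Theorem. In this paper ``projective'' means \emph{embeddable on the projective plane}, i.e.\ $\tilde g\le 1$, not $\tilde g=1$; note that the Corollary's list explicitly contains all the planar cases $p_1^{\alpha_1}p_2$, $p_1^{\alpha_1}p_2^2$, $p_1p_2p_3$, $p_1^2p_2p_3$. Your opening reduction (``embeds on $N_1$ but not on $S_0$'') is therefore the wrong target, your sufficiency argument omits every planar $n$ in the list, and your necessity argument misplaces the planar cases as lying \emph{outside} the list and needing $\tilde g\ne 1$, when in fact they lie inside and need only $\tilde g\le 1$.

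Once this is straightened out, the Corollary is a one-line consequence of results already proved: a hypergraph is projective iff $\tilde g\le 1$ iff it is planar or has $\tilde g=1$; now take the union of the list from Theorem~\ref{planarity} and the list from the Theorem on non-orientable genus~$1$ immediately preceding the Corollary. That is exactly how the paper treats it. The detailed programme you outline---divisibility monotonicity on incidence graphs, explicit projective embeddings for the maximal moduli $p_1^4p_2^3$ and $p_1^4p_2p_3$, and obstruction subgraphs at $p_1^5p_2^3$, $p_1^4p_2^4$, $p_1^5p_2p_3$, $p_1^2p_2^2p_3$, $p_1p_2p_3p_4$---is precisely the paper's proof of the \emph{preceding Theorem}, not of this Corollary. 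Your Euler-bound idea ($E\le 2V-2$ for bipartite graphs on $N_1$) is a reasonable and arguably more rigorous substitute for the paper's figure-based non-embeddability claims, but it belongs in the proof of that Theorem, not here.
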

		\newpage

\end{document}